\newtheorem{theorem}{Theorem}[section]
\newtheorem{proposition}[theorem]{Proposition}
\newtheorem{corollary}[theorem]{Corollary}
\newtheorem{lemma}[theorem]{Lemma}
\theoremstyle{definition}
\newtheorem{definition}[theorem]{Definition}
\newtheorem{example}[theorem]{Example}
\newtheorem{remark}[theorem]{Remark}
\newcommand{\R}{\mathbb{R}} 
\newcommand{\A}{\mathcal{A}} 
\newcommand{\subwordComplex}[1][Q,\pi]{\mathcal{SC}(#1)} 
\newcommand{\wo}{w_\circ} 
\definecolor{darkblue}{rgb}{0,0,0.7} 
\newcommand{\darkblue}{\color{darkblue}} 
\newcommand{\defn}[1]{\emph{\darkblue #1}} 
\def\sign{\operatorname{sign}}
\def\det{\operatorname{Det}}
\def\link{\operatorname{Link}}
\newcommand{\1}{{\bf 1}} 
\newcommand{\2}{{\bf 2}} 
\newcommand{\3}{{\bf 3}} 
\newcommand{\DiamondVector}{v_{\text{diamond}}} 
\newcommand{\fan}{\mathcal F_{Q,M}} 
\newcommand{\fanp}[1]{\mathcal F_{Q,#1}} 
\newcommand{\dualCountingMatrix}{D_{c,m}} 
\newcommand{\dualCountingMatrixp}[2]{D_{#1,#2}} 
\newcommand{\dualMatrix}{D_\varphi} 
\newcommand{\primalMatrix}{M_\varphi} 
\newcommand{\Obs}{\operatorname{Obs}(A_3)}
\newcommand\T{\rule{0pt}{2ex}}       
\newcommand\B{\rule[-2ex]{0pt}{0pt}} 
\numberwithin{equation}{section}
\title[Fan realizations of subword complexes and multi-associahedra]{Fan realizations of subword complexes and multi-associahedra via Gale duality}
\author[Bergeron, Ceballos, Labb\'e]{Nantel Bergeron$^{1,3}$, Cesar Ceballos$^{1,3}$, Jean-Philippe Labb\'e$^{2}$}
\address[1]{Fields Institute\\ Toronto, ON, Canada}
\address[2]{Freie Universit\"at Berlin, Berlin, Germany}
\address[3]{York University\\ Toronto, ON, Canada}
\email{bergeron@yorku.ca}
\email{ceballos@mathstat.yorku.ca}
\email{labbe@math.fu-berlin.de}
\thanks{
The first author was partially  supported by NSERC.\\
The second author was supported by the government of Canada through an NSERC Banting Postdoctoral Fellowship. He was also supported by a York University research grant.\\
The third author was supported by a FQRNT Doctoral scholarship and SFB Transregio ``Discretization in Geometry and Dynamics'' (TRR 109).
}
\date{\today}
\begin{document}
\maketitle

\begin{abstract}
We present complete simplicial fan realizations of any spherical subword complex of type~$A_n$ for $n\leq 3$. This provides complete simplicial fan realizations of simplicial multi-associahedra~$\Delta_{2k+4,k}$, whose facets are in correspondence with $k$-triangulations of a convex $(2k+4)$-gon. This solves the first open case of the problem of finding fan realizations where polytopality is not known. 
The techniques presented in this paper work for all finite Coxeter groups and we hope that they will be useful to construct fans realizing subword complexes in general. In particular, we present fan realizations of two previously unknown cases of subword complexes of type $A_4$, namely the multi-associahedra $\Delta_{9,2}$ and $\Delta_{11,3}$.
\end{abstract}

%
%
\section{Introduction}

Subword complexes are simplicial complexes introduced by Knutson and Miller in~\cite{KnutsonMiller-subwordComplex} motivated from the study of Gr\"obner geometry of Schubert varieties~\cite{knutson_grobner_2005}.  They proved that any subword complex is either a topological ball or sphere~\cite[Corollary 3.8]{KnutsonMiller-subwordComplex}, and asked the question of whether every spherical subword complex can be realized as the boundary complex of a simplicial convex polytope~\cite[Question 6.4]{KnutsonMiller-subwordComplex}. The answer to this question has been verified to be positive only for a few cases, which include interesting families of polytopes such as all even dimensional cyclic polytopes~\cite{ceballos_subword_2013}, the duals of $c$-generalized associahedra~\cite{ceballos_subword_2013}, the pseudotriangulation polytope of any planar point set in general position~\cite{RoteSantosStreinu-pseudotriangulationPolytope} and the brick polytopes of ``root-independent subword complexes"~\cite{PilaudSantos-brickPolytope, pilaud_brick_2011}.  
Another family of closely related simplicial complexes is the family of multi-associahedra. Given any two positive integers $k\geq 1$ and $\ell \geq 2k+1$, the (simplicial) multi-associahedron $\Delta_{\ell,k}$ is a simplicial complex whose facets correspond to $k$-triangulations of a convex $\ell$-gon~\cite{jonsson_generalized_2005}. 
This simplicial complex is conjectured to be realizable as the boundary complex of a polytope~\cite[Section 1.2]{jonsson_generalized_2005}. 
The only cases for which this conjecture has been verified are summarized in Table~\ref{tab:polytopality_multi-associahedra}. We refer to~\cite{ceballos_associahedra} and the recent book~\cite{mueller_associahedra_2012} for background and history about the special case of the classical associahedron.

\begin{small}
\begin{table}[!htbp]
\begin{center}
\begin{tabular}{|c|l|}
\hline
&\\[-.13in]
$\Delta_{\ell,k}$ & Realizable as a\\[0.05in]
\hline
&\\[-.13in]
$k=1$ & \text{dual of a classical associahedron} \\
$\ell=2k+1$ & \text{single vertex}  \\
$\ell=2k+2$ & \text{simplex}  \\
$\ell=2k+3$ & \text{cyclic polytope} \cite{PilaudSantos-multitriangulations,ceballos_subword_2013} \\
$\ell=2k+4$ & \text{complete simplicial fan} (\defn{this paper})\\
$\Delta_{8,2}$ & \text{6-dimensional polytope} \cite{bokowski_symmetric_2009,ceballos_associahedra_2012} (\defn{this paper}) \\
$\Delta_{9,2}$ & \text{complete simplicial fan} (\defn{this paper})\\
$\Delta_{11,3}$ & \text{complete simplicial fan} (\defn{this paper})\\
\hline
\end{tabular}
\vspace{.05in}
\end{center}
\caption{Known results about polytopality of multi-associahedra.}
\label{tab:polytopality_multi-associahedra} 
\vspace{-15pt}
\end{table}
\end{small}

\noindent
Subword complexes and multi-associahedra turn out to be quite related: every multi-associahedron can be obtained as a well chosen subword complex of type $A$~\cite{PilaudPocchiola, Stump}, see also~\cite{serrano_maximal_2012}. Conversely, the family of multi-associahedra is universal, in the sense that every spherical subword complex of type $A$ can be obtained as the link of a face in a multi-associahedron~\cite[Proposition~5.6]{PilaudSantos-brickPolytope}, see also~\cite[Theorem 2.15]{ceballos_subword_2013}. 
The relation between these two families of simplicial complexes has been extended to arbitrary finite Coxeter groups in~\cite{ceballos_subword_2013}. 

In this paper, we find complete simplicial fan realizations for any spherical subword complex of type~$A_n$ for~$n\leq 3$.
This is particularly interesting for the case $n=3$, where the answer to the question of polytopality is not known. In particular, we obtain complete simplicial fan realizations of multi-associahedra~$\Delta_{\ell,k}$ for $\ell \leq 2k+4$, which solves the previously unknown case when $\ell=2k+4$. 
Our constructions also lead to a large connected component of the fan realization space in each case. We emphasize that even for the classical associahedron very little is known about the space of polytopal realizations~\cite{ceballos_realizing_2012}. Most of the known constructions of associahedra are very punctual and combinatorial, and it is not even known if the space of realizations is connected. 
We hope that the results and techniques in this paper will represent a significant advance in this direction.
It is also natural to ask whether the complete simplicial fans constructed here are normal fans of polytopes. Although it is often true for $k\leq 2$, we do not know if any of our fan realizations for~$\Delta_{10,3}$ is the normal fan of a polytope. The multi-associahedron~$\Delta_{10,3}$ is a 3-neighborly simplicial complex of dimension $8$ with $f$-vector $(1, 15, 105, 455, 1320, 2607, 3465, 2970, 1485, 330)$. We tested $144139$ different fans realizing it among the infinitely many produced by our construction, and none of them is the normal fan of a polytope, see Table~\ref{tab:polytopality_A3}. Nevertheless, the polytopality question of multi-associahedra remains open in general, see Section~\ref{sec:polytopality} for more details.

The construction of the fans in this paper can be applied for any subword complex of type~$A_n$. Although it does not produce the right fans for $n\geq 4$, it seems to be close to a valid construction, at least for small values of $k$ and $n$. We argue this fact in Section~\ref{sec:typeA4}, where we apply a slight modification to our construction to produce fan realizations of two previously unknown cases of subword complexes of type~$A_4$, namely the multi-associahedra~$\Delta_{9,2}$ and~$\Delta_{11,3}$. We remark that these two cases are far from trivial. In particular, they are intractable with computational methods previously used in the literature~\cite{bokowski_symmetric_2009,ceballos_associahedra_2012}. The corresponding~$f$-vectors are presented in Table~\ref{tab:f-vectorA4}. 
Interestingly, we show that the two fans we present for~$\Delta_{9,2}$ and~$\Delta_{11,3}$ can not be obtained as the normal fan of a polytope. 

\begin{footnotesize}
\begin{table}[!htdp]
\begin{tabular}{|c|c|c|l|}
\hline
&&\\[-.13in]
$\Delta_{\ell,k}$ & neighborliness & dim. & $f$-vector\\[0.05in]
\hline
$\Delta_{9,2}$ & 2-neighborly &7& $(1, 18, 153, 732, 2115, 3762, 4026, 2376, 594)$ \\
$\Delta_{11,3}$ & 3-neighborly &11& $(1, 22, 231, 1540, 7150, 23958, 58751, 105534, 137280, 125840, 77077, 28314, 4719)$ \\
\hline
\end{tabular}
\caption{$f$-vectors of two multi-associahedra of type $A_4$.}
\label{tab:f-vectorA4}
\end{table}%
\end{footnotesize}

The main ideas of this paper are based on results presented in Ceballos's doctoral thesis~\cite[Chapter~3]{ceballos_associahedra_2012}. The methods presented here are developed using Coxeter group theory. We expect that this new point of view will be useful to obtain complete simplicial fan realizations of spherical subword complexes in general.

\subsection*{Acknowledgements}
The authors are grateful to 
Bruno Benedetti, 
Frank Lutz,
Thomas McConville, and
Vic Reiner
for important conversations that influenced the results in this paper. They are specially grateful to Darij Grinberg for his important comments about Section~\ref{sec:sign_megabipartite}, and to Francisco Santos for his polytopal construction in Example~\ref{ex:Santos}. We are also grateful to Vincent Pilaud and Vic Reiner for their comments on previous versions of this paper.

%

%
%
\section{Definitions and main results}

Let~$(W,S)$ be a finite Coxeter system. Let~$Q=(q_1,\dots, q_r)$ be a word in the generators~$S$ and~$\pi\in W$ be an element of the group. 

\begin{definition}[Knutson--Miller \cite{KnutsonMiller-subwordComplex}]
The \defn{subword complex $\subwordComplex$} is the simplicial complex whose faces are subsets~$I\subset [r]$ such that the subword of~$Q$ with positions at~$[r] \smallsetminus I$ contains a reduced expression for~$\pi$.
\end{definition}

As mentioned above, a subword complex is either a topological ball or sphere. Moreover, it was proven in \cite[Theorem 3.7]{ceballos_subword_2013} that every spherical subword complex is isomorphic to a subword complex of the form~$\subwordComplex[Q,\wo]$, where the element $\pi=\wo$ is the longest element of the group. Therefore, we restrict our study to subword complexes of this form and write~$\subwordComplex[Q]$ for simplicity. 

In this paper we are interested in constructing complete simplicial fan realizations of spherical subword complexes. We denote by~$N:=\ell(\wo)$ the length of the longest element of the group. 

\begin{definition}\label{def:fan}
Given a word $Q$ and a matrix $M\in \R^{(r-N)\times r}$, we define a natural collection of cones~$\fan$ in~$\R^{r-N}$. Its rays are given by the column vectors of $M$ and its cones are spanned by the columns corresponding to faces of subword complex $\subwordComplex[Q]$. The notation $\fan$ is extensively used throughout the paper.
\end{definition}

Although the techniques developed in this paper work for arbitrary finite Coxeter groups, part of our main results are devoted to the particular case of Coxeter groups of type~$A$.
Let $W$ be a Coxeter group of type $A$ generated by the set $S=\{s_1,\dots ,s_n\}$ of simple transpositions $s_i =(i\ \ i+1)$. Moreover, let $\Delta$ be the corresponding set of simple roots and $\Phi=W\cdot\Delta=\Phi^+\sqcup\Phi^-$ be the set of roots partitioned into the positive and negative roots respectively. Let~$c=(s_1,\dots ,s_n)$ be a Coxeter element, and $P_m = c^m = (p_1,\dots , p_{\widetilde r})$ be a sufficiently long word that contains Q as a subword. 
The number $\widetilde r = mn$ denotes the number of letters in~$P_m$.
The main ingredient in our construction is a counting matrix $\dualCountingMatrix$, whose entries count the number of reduced expressions of~$c$ in~$P_m$ containing the letter $p_i$, in position $i$, after restricting to standard parabolic subgroups.

\begin{definition}\label{def:dual_counting_matrix}
The \defn{counting matrix $\dualCountingMatrix$} is a $(N \times \widetilde r)$-matrix whose rows correspond to positive roots and columns to the positions $1\le j\le \widetilde r$ of the letters of~$P_m=c^m$. Given $\alpha\in\Phi^+$ and $1\le j\le \widetilde r$, denote by~$S_\alpha\subset S$ the subset of generators whose corresponding simple roots are used in the unique decomposition of the root~$\alpha$ in the basis $\Delta$ and by~$c_\alpha$ the restriction of~$c$ to the generators in $S_\alpha$. The entry $d_{\alpha,j}$ of $\dualCountingMatrix$ is the number of reduced expressions of $c_\alpha$ in $P$ (copies of~$c_\alpha$ up to commutations) using the letter $p_j$ in position $j$. In particular, if $p_j\notin S_\alpha$, then $d_{\alpha,j}=0$.
\end{definition}

\begin{example}
Let $P_3=c^3=(s_1,s_2,s_1,s_2,s_1,s_2)$ be a word of type $A_2$. The counting matrix is 
\[
\dualCountingMatrixp{c}{3} 
=
\left(
\begin{array}{cccccc}
  1& 0  & 1 & 0  & 1 & 0      \\
 3 &  1 & 2 & 2  & 1 & 3     \\
  0 & 1  & 0 & 1  & 0 & 1     
\end{array}
\right)
\]
where the rows correspond to the positive roots \{$\alpha_1$, $\alpha_1+\alpha_2$,  $\alpha_2$\} in this order. For example, if~$\alpha=\alpha_1$ then~$P_\alpha=(s_1,s_1,s_1)$ and~$c_\alpha=s_1$. Therefore, $d_{\alpha_1,j}=1$ if $p_j=s_1$ and $d_{\alpha_1,j}=0$ if $p_j=s_2$. For $\alpha=\alpha_1+\alpha_2$, we have $P_\alpha=P_3$ and $c_\alpha=c$. Now $d_{\alpha,1}=3$ since we must use $s_1=p_1$ in position 1 and we have three choices of $s_2$ to the right of position $1$. More general formulas for counting matrices of type $A_n$ with $n\leq 3$ are presented in Appendix~\ref{app:matrices}.
\end{example}

For any embedding $\varphi: Q \rightarrow c^m$ of $Q$ into a sufficiently long word $P_m=c^m$ we construct, in Theorem~\ref{thm:complete_fan}, a complete simplicial fan realization of the spherical subword complex~$\subwordComplex[Q]$ . This embedding can be thought as a map $\varphi: [r] \rightarrow [\widetilde r]$ from positions in $Q$ to positions in $P_m$.

\begin{definition}
\label{def:dual_matrix}
The \defn{restricted matrix} $\dualMatrix$ is the restriction of $\dualCountingMatrix$ to the columns $\varphi(1),\dots ,\varphi(r)$ corresponding to the positions of the letters of $Q$ embedded in $c^m$.
\end{definition}

\begin{example}
Let $Q=(q_1,\dots,q_5)=(s_1,s_2,s_2,s_1,s_1)$ be a word of type $A_2$. Let~$c=(s_1,s_2)$ and~$P_4 =c^4= ({\bf s_1},{\bf s_2},s_1,{\bf s_2},{\bf s_1},s_2,{\bf s_1},s_2,)$. Here, the letters in bold correspond to the letters of~$Q$ embedded in $c^4$. The dual restricted matrix is 
\[
\dualMatrix=
\left(
\begin{array}{ccccc}
  1& 0  & 0 & 1  & 1       \\
 4 &  1 & 2 & 2  & 1      \\
  0 & 1  & 1 & 0  & 0      
\end{array}
\right)
\]
where the rows correspond to the positive roots \{$\alpha_1$, $\alpha_1+\alpha_2$,  $\alpha_2$\} in this order.
\end{example}

Given a full rank matrix $A$, we say that $B$ is a \defn{Gale dual matrix} of $A$ if the rows of $B$ form a basis for the kernel of $A$, see \cite[Definition~4.1.35]{de_loera_triangulations_2010}. This dual matrix is determined up to linear transformation of the rows. Let~$\primalMatrix$ be a Gale dual matrix of~$\dualMatrix$. This matrix is the key ingredient in the following main theorem, which is proven in Section~\ref{sec:mainproof}.

\begin{theorem}\label{thm:complete_fan}
Let~$\subwordComplex[Q]$ be a spherical subword complex of type~$A_n$ with $n\leq 3$, and $\varphi$ be an embedding of $Q$ into $c^m$. The fan $\fanp{\primalMatrix}$ is a complete simplicial fan realization of~$\subwordComplex[Q]$.
\end{theorem}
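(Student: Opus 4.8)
The plan is to prove that $\fanp{\primalMatrix}$ is a complete simplicial fan realizing $\subwordComplex[Q]$ by establishing three things: (i) the collection of cones spanned by the columns of $\primalMatrix$ indexed by faces of $\subwordComplex[Q]$ actually forms a fan (i.e., the cones meet along common faces); (ii) this fan is simplicial; and (iii) it is complete, covering all of $\R^{r-N}$. The guiding principle is the standard duality between Gale dual vector configurations and simplicial fans. Because $\primalMatrix$ is a Gale dual of $\dualMatrix$, there is a precise dictionary relating the positive circuits (linear dependences with nonnegative coefficients) of $\dualMatrix$ to the cones of $\fanp{\primalMatrix}$. So the first structural step will be to reformulate both the simpliciality/fan property and completeness of $\fanp{\primalMatrix}$ as combinatorial conditions on the columns of $\dualMatrix$, via Gale duality for fans, for instance along the lines of the criterion that a set of vectors forms a complete simplicial fan if and only if its Gale dual has a suitable matching between non-faces and the simplicial complex structure.

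Concretely, the next step will be to extract the combinatorial meaning of the counting matrix $\dualCountingMatrix$ and its restriction $\dualMatrix$. A subset $I\subset[r]$ is a \emph{non-face} of $\subwordComplex[Q]$ exactly when the complement $[r]\setminus I$ fails to contain a reduced expression for $\wo$, equivalently when $I$ contains a minimal non-face, which corresponds to a positive root $\alpha\in\Phi^+$ such that the letters of $Q$ indexed by $I$ (pulled back through $\varphi$ into $c^m$) exhaust all the copies of $c_\alpha$ available — this is precisely what the entries $d_{\alpha,j}$ count. I would therefore want to show that each row of $\dualMatrix$ indexed by $\alpha$ encodes a linear functional that detects exactly this exhaustion phenomenon, so that the sign vectors / positive linear dependences among the columns of $\dualMatrix$ are in bijection with the minimal non-faces of $\subwordComplex[Q]$. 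Establishing this bijection between positive dependences of $\dualMatrix$ and minimal non-faces is the technical heart of the argument, and it is where the hypothesis $n\leq 3$ will be used: the explicit formulas for the counting matrices in Appendix~\ref{app:matrices} presumably guarantee that, for $n\leq 3$, the relevant minimal positive dependences come in the correct one-to-one fashion, whereas for $n\geq 4$ (as the introduction warns) this correspondence breaks down.

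Once the non-faces of $\subwordComplex[Q]$ are matched with the positive circuits of $\dualMatrix$, completeness and the fan property follow from the general Gale-duality theorem for fans: a vector configuration $\primalMatrix$ whose Gale dual $\dualMatrix$ has its positive circuits (or, dually, its cofaces) matching a given simplicial sphere yields a complete simplicial fan realizing that sphere. Here I would invoke the fact, recalled earlier, that $\subwordComplex[Q]$ is a simplicial sphere of the correct dimension $r-N-1$, so that the number of Gale-dual coordinates $r-N$ matches the ambient dimension of the fan. The completeness will come from checking that every ray direction in $\R^{r-N}$ lies in some cone, which reduces to the statement that every point admits a positive-spanning subset of columns forming a face — again a consequence of the circuit/non-face dictionary. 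I expect the \textbf{main obstacle} to be the proof of the bijection between the positive minimal linear dependences of $\dualMatrix$ and the minimal non-faces of $\subwordComplex[Q]$: this requires understanding precisely how the combinatorics of reduced words for $c_\alpha$ inside $c^m$ (restricted via $\varphi$) translate into exact linear relations among columns, and it is exactly the place where the dimension restriction $n\leq 3$ is essential. Verifying that no spurious positive dependences appear — which would correspond to the fan cones overlapping or failing to be simplicial — is the delicate part, and I anticipate it leans on the explicit block structure of the type-$A_n$ counting matrices for small $n$ rather than on a uniform argument.
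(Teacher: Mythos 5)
Your proposal contains a genuine gap, and it sits exactly where the paper's real work is concentrated. You conclude by invoking a ``general Gale-duality theorem for fans'' asserting that once the positive circuits of $\dualMatrix$ are matched with the non-faces of $\subwordComplex[Q]$, the fan $\fanp{\primalMatrix}$ is automatically complete. No such theorem exists in that form. What Gale duality actually yields (this is Lemma~\ref{lem:complete_fan2}) is that the sign conditions on the maximal minors of the Gale dual --- the Coxeter signature property $\sign(w)\cdot\det(w)>0$ for every reduced expression $w\subset Q$ of $\wo$ --- are equivalent to the two \emph{local} conditions (B) and (F) of Lemma~\ref{lem:complete_fan1}: facets span bases, and two cones related by a flip lie on opposite sides of their common wall. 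These local conditions do \emph{not} imply that the cones tile $\R^{r-N}$: the collection of cones can wrap around the origin, covering the sphere $\ell$ times with $\ell>1$ (local compatibility along walls says nothing about injectivity around codimension-two faces). This is precisely why Theorem~\ref{thm:fan_reformulation}, resting on \cite[Corollary~4.5.20]{de_loera_triangulations_2010}, carries the additional Injectivity hypothesis (I), and why Section~\ref{sec:realization_space} of the paper is careful to say that the signature condition alone only produces an ``$\ell$-covering of the sphere,'' with a separate deformation argument needed to force $\ell=1$. Your sentence ``the completeness will come from checking that every ray direction lies in some cone, which reduces \dots to the circuit/non-face dictionary'' is exactly the step that fails: surjectivity of the covering is easy, injectivity is not, and neither follows from the circuit structure.

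The paper's proof of Theorem~\ref{thm:complete_fan} therefore has two components that your plan does not reproduce. First, a reduction (Section~\ref{sec:cm_enough}): it suffices to treat $Q=c^m$ with the trivial embedding, since for any embedded $Q'$ the fan $\fanpp{Q'}{M_{\varphi'}}$ is, for suitable choices of Gale duals, the projection of the link of a face of $\fanp{\primalMatrix}$, and such links project to complete fans (Lemma~\ref{lem:projection}). Second, for $Q=c^m$ the two hypotheses of Theorem~\ref{thm:fan_reformulation} are verified separately: condition (S) is Proposition~\ref{prop:part1}, proved by computing and factoring the determinants of the submatrices of $\dualCountingMatrix$ indexed by all $16$ reduced expressions of $\wo$ in type $A_3$ (Tables~\ref{table:determinantsA3} and~\ref{table:determinantsA3_2}) and comparing signs with the sign function of Section~\ref{sec:sign_megabipartite}; condition (I) is then proved by a long case analysis (reduced expressions using $1$ through $6$ negative basis vectors), exhibiting in each case an explicit nonnegative vector $v$ whose orthogonal hyperplane separates the given cone from the negative orthant. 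This is where the restriction $n\leq 3$ genuinely enters (the paper notes Proposition~\ref{prop:part1} fails for $n\geq 4$), not through a circuit/non-face bijection. A smaller but real inaccuracy: a minimal non-face of $\subwordComplex[Q]$ is not described by a single positive root $\alpha$ whose copies of $c_\alpha$ are exhausted; the counting matrix is not a device for detecting non-faces by linear functionals, but a matrix engineered so that its maximal minors indexed by reduced expressions of $\wo$ have the signs prescribed by the sign function.
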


This result provides infinitely many complete simplicial fan realizations of any spherical subword complex of type~$A_n$ with $n\leq 3$. In the particular cases where the word $Q=c^m$ is naturally embedded into itself we get explicit realizations. In the corollaries below, the entries of the matrices depend on the functions $S(i)=i^2$ and $T(i)=i(i+1)/2$. 

\begin{corollary}\label{cor:fan_bipartite213}
Let~$c=(s_2,s_1,s_3)$ be a bipartite Coxeter element of type $A_3$ and $Q=c^m$, with $m\geq 3$. The fan~$\fanp{M_{213,m}}$ is a complete simplicial fan realization of~$\subwordComplex[Q]$ for the matrix $M_{213,m}$ below.
\end{corollary}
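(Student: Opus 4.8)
The plan is to deduce this corollary directly from Theorem~\ref{thm:complete_fan} by exhibiting $M_{213,m}$ as an explicit Gale dual of the relevant restricted matrix. When $Q=c^m$ is naturally embedded into itself, the embedding $\varphi$ is the identity map $\varphi(i)=i$, so the restricted matrix $\dualMatrix$ coincides with the full counting matrix $\dualCountingMatrixp{c}{m}$ for the bipartite Coxeter element $c=(s_2,s_1,s_3)$. By Theorem~\ref{thm:complete_fan} it therefore suffices to check that $M_{213,m}$ is a Gale dual of $\dualCountingMatrixp{c}{m}$. Since a Gale dual is determined only up to an invertible linear transformation of its rows, and such a transformation preserves the property of being a complete simplicial fan realization (it acts as an invertible linear map on $\R^{r-N}$ sending rays to rays and cones to cones), we are free to work with the particular matrix displayed in the statement.

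First I would write down $\dualCountingMatrixp{c}{m}$ explicitly. For type $A_3$ we have $N=6$ and $\widetilde r=3m$, so $\dualCountingMatrixp{c}{m}$ is a $6\times 3m$ matrix while $M_{213,m}$ is $(3m-6)\times 3m$. Using Definition~\ref{def:dual_counting_matrix} together with the closed-form counting formulas for type $A_3$ collected in Appendix~\ref{app:matrices}, each entry $d_{\alpha,j}$ is a polynomial in the column index $j$ expressed through $S(i)=i^2$ and $T(i)=i(i+1)/2$. For the bipartite word $c=(s_2,s_1,s_3)$ the three residue classes of columns modulo $3$ correspond to the three letters $s_2,s_1,s_3$, which endows $\dualCountingMatrixp{c}{m}$ with a clean periodic block structure; the hypothesis $m\ge 3$ guarantees that $P_m=c^m$ is long enough for the counting formulas to apply uniformly and for $\subwordComplex[Q]$ to be spherical.

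The core of the argument is the verification that the rows of $M_{213,m}$ form a basis of the kernel of $\dualCountingMatrixp{c}{m}$. I would split this into two steps. The containment step amounts to the matrix identity $\dualCountingMatrixp{c}{m}\,M_{213,m}^{\top}=0$: since both factors are given by explicit polynomial formulas in the indices, each entry of the product is a fixed low-degree expression in $m$ and the row and column indices, and the identity follows from elementary telescoping together with the standard summation identities for $\sum i$ and $\sum i^2$, with separate attention to the finitely many boundary columns where the generic periodic pattern breaks. The spanning step reduces to a rank computation: one shows that $M_{213,m}$ has full row rank $3m-6$, for instance by exhibiting a $(3m-6)\times(3m-6)$ submatrix in triangular form. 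Because $\dualCountingMatrixp{c}{m}$ has rank $N=6$ (its rows, indexed by the positive roots, are linearly independent), its kernel has dimension $3m-6$, so full row rank together with the kernel containment forces the rows to be a basis.

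The main obstacle is the explicit identity $\dualCountingMatrixp{c}{m}\,M_{213,m}^{\top}=0$, which must hold uniformly for all $m\ge 3$. The delicate bookkeeping lies in matching the periodic column structure of $\dualCountingMatrixp{c}{m}$ against the structure of $M_{213,m}$, and especially in the treatment of the initial and terminal columns where the generic polynomial pattern degenerates. Once this verification and the rank bound are in place, Theorem~\ref{thm:complete_fan} immediately yields that $\fanp{M_{213,m}}$ is a complete simplicial fan realization of $\subwordComplex[Q]$.
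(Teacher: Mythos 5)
Your proposal is correct and takes essentially the same route as the paper: the corollary is deduced from Theorem~\ref{thm:complete_fan} applied to the trivial embedding of $Q=c^m$ into itself, with $M_{213,m}$ exhibited as an explicit Gale dual of the counting matrix $\dualCountingMatrixp{213}{m}$ (this is precisely what Appendix~\ref{app:matrices} asserts, leaving the verification implicit). The only difference is that you spell out the routine check the paper omits, namely the kernel identity $\dualCountingMatrixp{213}{m}\,M_{213,m}^{\top}=0$ together with the rank count (full row rank of $M_{213,m}$ is immediate from its $-I_{3m-6}$ block, and $\dualCountingMatrixp{213}{m}$ has rank $6$ since its submatrices indexed by reduced expressions of $\wo$ have nonzero determinant by Proposition~\ref{prop:part1}).
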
 

\begin{minipage}{.4\textwidth}
\footnotesize
\[
	M_{213,m}^t=\left(\begin{array}{c}
	\\[1.5ex]
	-I_{3m-6} \\
	\\[1.5ex]\hline
	\begin{array}{c@{\hspace{0.25cm}}|@{\hspace{0.5cm}}c@{\hspace{0.5cm}}|@{\hspace{0.25cm}}c}
	&& \\[1ex]
	B_{213,m-2} & \cdots & B_{213,1}\\
	&& \\[1ex]
	\end{array}
	\end{array}\right)_{(3m)\times (3m-6)}
\]
\end{minipage}
\qquad
\begin{minipage}{.4\textwidth}
\tiny
\[
	\tiny
	B_{213,i}:=\left(\begin{array}{rrr}
	S(i+1) & -T(i) & -T(i) \\
	2T(i) & -T(i-1)+1 & -T(i) \\
	2T(i) & -T(i) & -T(i-1)+1 \\
	-S(i+1)+1 & T(i) & T(i) \\
	-2T(i) & T(i-1) & T(i) \\
	-2T(i) & T(i) & T(i-1) \\ 
	\end{array}\right)
\]
\end{minipage}

\medskip
\begin{corollary}\label{cor:fan_123}
Let~$c=(s_1,s_2,s_3)$ be a Coxeter element of type $A_3$ and $Q=c^m$, with $m\geq 3$. The fan~$\fanp{M_{123,m}}$ is a complete simplicial fan realization of~$\subwordComplex[Q]$ for the matrix $M_{123,m}$ below.
\end{corollary}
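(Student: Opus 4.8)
The plan is to derive this statement as a direct specialization of Theorem~\ref{thm:complete_fan}, exactly parallel to the bipartite case of Corollary~\ref{cor:fan_bipartite213}. Since $Q=c^m$ is naturally embedded into itself, the embedding $\varphi$ is the identity on $[\widetilde r]=[3m]$, so the restricted matrix $\dualMatrix$ coincides with the full counting matrix $\dualCountingMatrixp{c}{m}$ for the Coxeter element $c=(s_1,s_2,s_3)$. By Theorem~\ref{thm:complete_fan}, the fan $\fanp{\primalMatrix}$ is a complete simplicial fan realization of $\subwordComplex[Q]$ for \emph{any} Gale dual matrix $\primalMatrix$ of $\dualMatrix$. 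The content of the corollary is therefore not a new realization result, but an \emph{explicit closed form}: I would assert that the specific matrix $M_{123,m}$ displayed below the statement is one such Gale dual, with entries governed by $S(i)=i^2$ and $T(i)=i(i+1)/2$, and then verify it.

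First I would write down the explicit counting matrix $\dualCountingMatrixp{c}{m}$ for this straight Coxeter element using the type-$A_3$ formulas of Appendix~\ref{app:matrices}; this is an $N\times\widetilde r = 6\times 3m$ matrix, since $N=\ell(\wo)=\binom{4}{2}=6$. The six rows are indexed by the positive roots of $A_3$ in the fixed order used throughout, and the entries count copies of $c_\alpha$ using each letter in position $j$, so they are low-degree polynomials in the block index $i$ (the triangular and square numbers $T$ and $S$ are exactly what one expects from summing reduced-word counts across the $m$ periods of $c^m$). Then I would verify the Gale duality condition of the paragraph preceding Theorem~\ref{thm:complete_fan} directly: namely that $\dualCountingMatrixp{c}{m}\cdot M_{123,m}=0$ and that the $3m$ rows of $M_{123,m}$ (equivalently the columns of $M_{123,m}^t$) span the full kernel, which has dimension $\widetilde r-N=3m-6$. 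The block structure with the identity block $-I_{3m-6}$ sitting atop the stacked blocks makes the rank claim immediate, since that identity block alone already exhibits $3m-6$ independent rows.

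The main computational obstacle is checking the orthogonality relation $\dualCountingMatrixp{c}{m}\cdot M_{123,m}=0$ against the explicit blocks $B_{123,i}$. This reduces to a finite family of polynomial identities in $i$ (and in $m$) among the quantities $S(i+1)$, $T(i)$, $T(i-1)$, and their counterparts in the straight-element blocks: each column of the counting matrix interacts only with a bounded window of blocks, so the verification decouples into a fixed number of identities of bounded degree rather than a genuinely $m$-dependent computation. Concretely, I expect the product of any row of $\dualCountingMatrixp{c}{m}$ with any block column to telescope, the $T$ and $S$ terms cancelling across consecutive blocks $B_{123,i}$ and $B_{123,i-1}$, with the isolated $+1$ corrections in the $B$ entries precisely accounting for the identity block $-I_{3m-6}$. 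I would carry out this verification for the generic interior case and then separately treat the two boundary cases (the first and last few positions of $P_m$, where the window of interacting blocks is truncated), which is where the special $+1$ terms and the hypothesis $m\geq 3$ become essential; the condition $m\geq 3$ guarantees enough blocks for the telescoping pattern to stabilize so that no boundary block overlaps both ends simultaneously.
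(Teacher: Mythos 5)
Your proposal follows essentially the same route as the paper: the corollary is obtained there by specializing Theorem~\ref{thm:complete_fan} to $Q=c^m$ with the trivial embedding, so that $\dualMatrix=\dualCountingMatrixp{123}{m}$, combined with the statement in Appendix~\ref{ap:Gale_dual} that $M_{123,m}$ is an explicit choice of Gale dual of this counting matrix. Your remaining verification (the orthogonality $\dualCountingMatrixp{123}{m}\cdot M_{123,m}^{t}=0$ and the rank count, for which in this non-bipartite case the invertible submatrix uses the final row $(0,\dots,0,-1)$ in addition to the identity block) is exactly the routine check the paper leaves implicit, so the proposal is correct.
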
 

\begin{minipage}{.4\textwidth}
\footnotesize
\[
	M_{123,m}^t=\left(\begin{array}{c}
	\\
	\begin{array}{@{\hspace{1.5cm}}c@{\hspace{1.5cm}}|@{\hspace{0.0cm}}r@{\hspace{-0.5cm}}}
	  -I_{3m-5}& \begin{array}{c} 0\\ \vdots \\ 0\end{array} \\
	  \end{array}
	\\[1.5ex]\hline
	\begin{array}{c@{\hspace{0.25cm}}|@{\hspace{0.5cm}}c@{\hspace{0.5cm}}|@{\hspace{0.25cm}}r}
	&& \\[1ex]
	B_{123,m-2} & \cdots & B_{123,1}\\
	&& \\[1ex]
	\end{array}\\\hline
	0 \hspace{1.75cm} \dots \hspace{1.5cm} 0\quad -1\\
	\end{array}\right)_{(3m)\times (3m-6)}
\]
\end{minipage}
\hspace{1cm}
\begin{minipage}{.4\textwidth}
\tiny
\[
	\tiny
	B_{123,i}:=\left(\begin{array}{rrr}
	T(i) & -2T(i) & T(i)^* \\
	T(i+1) & -2T(i) & T(i-1)^* \\
	T(i) & -S(i)+1 & T(i-1)^* \\
	-T(i) & 2T(i) & -T(i)+1^* \\
	-T(i+1)+1 & 2T(i) & -T(i-1)^* \\
	-T(i) & S(i) & -T(i-1)^* \\ 
	\end{array}\right),
\]
\end{minipage}
\vspace{10pt}

The star denotes the exception that for $i=1$ the last column is given by $(0,1,1,1,-1,-1)^t$.

\bigskip
\begin{example}[3-dimensional associahedron]
Let~$c=(s_2,s_1,s_3)$ be a bipartite Coxeter element of type $A_3$ and $Q=c^3$. The subword complex~$\subwordComplex[Q]$ is isomorphic to a 3-dimensional simplicial associahedron. It can be realized as a complete simplicial fan~$\fanp{M_{213,3}}$ where
\[
\small
	M_{213,3}=\left(\begin{array}{rrrrrrrrr}
	-1 & 0 & 0 & 4 & 2 & 2 & -3 & -2 & -2 \\	
	 0 & -1 & 0 & -1 & 1 & -1 & 1 & 0 & 1 \\	
	 0 & 0 & -1 & -1 & -1 & 1 & 1 & 1 & 0  
	\end{array}\right)
\]
The rays of this fan are given by the column vectors of the matrix, and the cones are spanned by column vectors corresponding to faces of the subword complex~$\subwordComplex[Q]$.
Alternatively, the column vectors correspond to the diagonals $\{14,15,24,25,26,35,36,46,13\}$ of an hexagon (with vertices labeled cyclically from 1 up to 6) and the cones of the fan to its subdivisions.

Theorem~\ref{thm:complete_fan} actually provides infinitely many fan realizations of the 3-dimensional simplicial associahedron, one for each appropriate word $Q$ and embedding $\varphi$. Computational experiments show that most of these fans are the normal fan of a polytope, see Table~\ref{tab:polytopality_A3}. In Figure~\ref{fig:associahedra_3d}, we illustrate eight different polytopal realizations and remark that they are not equivalent in general, say up to affine linear transformations. 
\end{example}

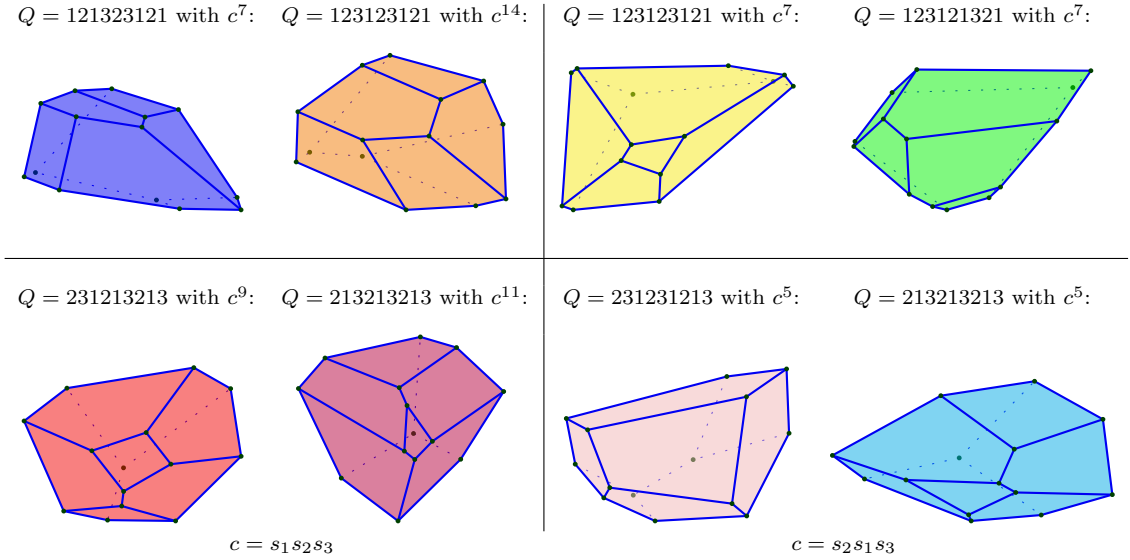
\begin{figure}[!htbp]
	\centering
	\footnotesize
	\begin{tabular}{cc@{\hspace{0.2cm}}|@{\hspace{0.2cm}}cc}
		$Q=121323121$ with $c^7$: & $Q=123123121$ with $c^{14}$: & $Q=123123121$ with $c^7$: & $Q=123121321$ with $c^{7}$: \\[1em]
		\begin{tikzpicture}%
	[x={(0.885124cm, -0.336154cm)},
	y={(0.284499cm, 0.938124cm)},
	z={(-0.368261cm, -0.083209cm)},
	scale=0.0675,
	back/.style={loosely dotted, thin},
	edge/.style={color=blue!95!black, thick},
	facet/.style={fill=blue!95!black,fill opacity=0.500000},
	vertex/.style={inner sep=0pt,circle,draw=green!25!black,fill=green!75!black,thick,anchor=base}]
%
%
\coordinate (37.1, 13.8, 19.7) at (37.1, 13.8, 19.7);
\coordinate (12.2, 8.80, 19.7) at (12.2, 8.80, 19.7);
\coordinate (32.0, 13.8, 19.7) at (32.0, 13.8, 19.7);
\coordinate (4.48, 8.80, 19.7) at (4.48, 8.80, 19.7);
\coordinate (52.5, 22.9, 33.3) at (52.5, 22.9, 33.3);
\coordinate (52.5, 19.9, 28.9) at (52.5, 19.9, 28.9);
\coordinate (52.5, 43.5, 99.6) at (52.5, 43.5, 99.6);
\coordinate (52.5, 45.6, 99.6) at (52.5, 45.6, 99.6);
\coordinate (6.45, 10.4, 19.7) at (6.45, 10.4, 19.7);
\coordinate (38.9, 40.8, 99.6) at (38.9, 40.8, 99.6);
\coordinate (39.4, 45.6, 85.7) at (39.4, 45.6, 85.7);
\coordinate (52.5, 45.6, 81.9) at (52.5, 45.6, 81.9);
\coordinate (37.1, 45.6, 99.6) at (37.1, 45.6, 99.6);
\coordinate (31.1, 40.8, 99.6) at (31.1, 40.8, 99.6);
\draw[edge,back] (37.1, 13.8, 19.7) -- (32.0, 13.8, 19.7);
\draw[edge,back] (32.0, 13.8, 19.7) -- (52.5, 22.9, 33.3);
\draw[edge,back] (32.0, 13.8, 19.7) -- (6.45, 10.4, 19.7);
\draw[edge,back] (4.48, 8.80, 19.7) -- (6.45, 10.4, 19.7);
\draw[edge,back] (6.45, 10.4, 19.7) -- (39.4, 45.6, 85.7);
\node[vertex] at (6.45, 10.4, 19.7)     {};
\node[vertex] at (32.0, 13.8, 19.7)     {};
\fill[facet] (31.1, 40.8, 99.6) -- (38.9, 40.8, 99.6) -- (52.5, 43.5, 99.6) -- (52.5, 45.6, 99.6) -- (37.1, 45.6, 99.6) -- cycle {};
\fill[facet] (31.1, 40.8, 99.6) -- (4.48, 8.80, 19.7) -- (12.2, 8.80, 19.7) -- (38.9, 40.8, 99.6) -- cycle {};
\fill[facet] (38.9, 40.8, 99.6) -- (12.2, 8.80, 19.7) -- (37.1, 13.8, 19.7) -- (52.5, 19.9, 28.9) -- (52.5, 43.5, 99.6) -- cycle {};
\fill[facet] (39.4, 45.6, 85.7) -- (37.1, 45.6, 99.6) -- (52.5, 45.6, 99.6) -- (52.5, 45.6, 81.9) -- cycle {};
\fill[facet] (52.5, 45.6, 81.9) -- (52.5, 22.9, 33.3) -- (52.5, 19.9, 28.9) -- (52.5, 43.5, 99.6) -- (52.5, 45.6, 99.6) -- cycle {};
\draw[edge] (37.1, 13.8, 19.7) -- (12.2, 8.80, 19.7);
\draw[edge] (37.1, 13.8, 19.7) -- (52.5, 19.9, 28.9);
\draw[edge] (12.2, 8.80, 19.7) -- (4.48, 8.80, 19.7);
\draw[edge] (12.2, 8.80, 19.7) -- (38.9, 40.8, 99.6);
\draw[edge] (4.48, 8.80, 19.7) -- (31.1, 40.8, 99.6);
\draw[edge] (52.5, 22.9, 33.3) -- (52.5, 19.9, 28.9);
\draw[edge] (52.5, 22.9, 33.3) -- (52.5, 45.6, 81.9);
\draw[edge] (52.5, 19.9, 28.9) -- (52.5, 43.5, 99.6);
\draw[edge] (52.5, 43.5, 99.6) -- (52.5, 45.6, 99.6);
\draw[edge] (52.5, 43.5, 99.6) -- (38.9, 40.8, 99.6);
\draw[edge] (52.5, 45.6, 99.6) -- (52.5, 45.6, 81.9);
\draw[edge] (52.5, 45.6, 99.6) -- (37.1, 45.6, 99.6);
\draw[edge] (38.9, 40.8, 99.6) -- (31.1, 40.8, 99.6);
\draw[edge] (39.4, 45.6, 85.7) -- (52.5, 45.6, 81.9);
\draw[edge] (39.4, 45.6, 85.7) -- (37.1, 45.6, 99.6);
\draw[edge] (37.1, 45.6, 99.6) -- (31.1, 40.8, 99.6);
\node[vertex] at (37.1, 13.8, 19.7)     {};
\node[vertex] at (12.2, 8.80, 19.7)     {};
\node[vertex] at (4.48, 8.80, 19.7)     {};
\node[vertex] at (52.5, 22.9, 33.3)     {};
\node[vertex] at (52.5, 19.9, 28.9)     {};
\node[vertex] at (52.5, 43.5, 99.6)     {};
\node[vertex] at (52.5, 45.6, 99.6)     {};
\node[vertex] at (38.9, 40.8, 99.6)     {};
\node[vertex] at (39.4, 45.6, 85.7)     {};
\node[vertex] at (52.5, 45.6, 81.9)     {};
\node[vertex] at (37.1, 45.6, 99.6)     {};
\node[vertex] at (31.1, 40.8, 99.6)     {};
\end{tikzpicture} & \begin{tikzpicture}%
	[x={(0.869613cm, -0.382202cm)},
	y={(0.289799cm, 0.907657cm)},
	z={(-0.399737cm, -0.173439cm)},
	scale=0.027000,
	back/.style={loosely dotted, thin},
	edge/.style={color=blue!95!black, thick},
	facet/.style={fill=orange!95!black,fill opacity=0.500000},
	vertex/.style={inner sep=0pt,circle,draw=green!25!black,fill=green!75!black,thick,anchor=base}]
%
%
\coordinate (-30.3, -16.1, -1.00) at (-30.3, -16.1, -1.00);
\coordinate (-25.2, -8.69, -1.00) at (-25.2, -8.69, -1.00);
\coordinate (1.30, 0.250, -1.00) at (1.30, 0.250, -1.00);
\coordinate (64.9, 0.250, -1.00) at (64.9, 0.250, -1.00);
\coordinate (31.3, -16.1, -1.00) at (31.3, -16.1, -1.00);
\coordinate (14.0, 58.0, 147.) at (14.0, 58.0, 147.);
\coordinate (82.3, 13.3, 9.44) at (82.3, 13.3, 9.44);
\coordinate (82.3, 61.0, 47.6) at (82.3, 61.0, 47.6);
\coordinate (50.3, 58.0, 147.) at (50.3, 58.0, 147.);
\coordinate (39.6, 93.4, 116.) at (39.6, 93.4, 116.);
\coordinate (38.3, 93.4, 147.) at (38.3, 93.4, 147.);
\coordinate (82.3, 73.6, 147.) at (82.3, 73.6, 147.);
\coordinate (82.3, 93.4, 147.) at (82.3, 93.4, 147.);
\coordinate (82.3, 93.4, 94.7) at (82.3, 93.4, 94.7);
\draw[edge,back] (-30.3, -16.1, -1.00) -- (-25.2, -8.69, -1.00);
\draw[edge,back] (-25.2, -8.69, -1.00) -- (1.30, 0.250, -1.00);
\draw[edge,back] (-25.2, -8.69, -1.00) -- (39.6, 93.4, 116.);
\draw[edge,back] (1.30, 0.250, -1.00) -- (64.9, 0.250, -1.00);
\draw[edge,back] (1.30, 0.250, -1.00) -- (82.3, 61.0, 47.6);
\node[vertex] at (-25.2, -8.69, -1.00)     {};
\node[vertex] at (1.30, 0.250, -1.00)     {};
\fill[facet] (82.3, 93.4, 94.7) -- (82.3, 61.0, 47.6) -- (82.3, 13.3, 9.44) -- (82.3, 73.6, 147.) -- (82.3, 93.4, 147.) -- cycle {};
\fill[facet] (82.3, 93.4, 94.7) -- (39.6, 93.4, 116.) -- (38.3, 93.4, 147.) -- (82.3, 93.4, 147.) -- cycle {};
\fill[facet] (50.3, 58.0, 147.) -- (31.3, -16.1, -1.00) -- (-30.3, -16.1, -1.00) -- (14.0, 58.0, 147.) -- cycle {};
\fill[facet] (82.3, 93.4, 147.) -- (38.3, 93.4, 147.) -- (14.0, 58.0, 147.) -- (50.3, 58.0, 147.) -- (82.3, 73.6, 147.) -- cycle {};
\fill[facet] (82.3, 73.6, 147.) -- (82.3, 13.3, 9.44) -- (64.9, 0.250, -1.00) -- (31.3, -16.1, -1.00) -- (50.3, 58.0, 147.) -- cycle {};
\draw[edge] (-30.3, -16.1, -1.00) -- (31.3, -16.1, -1.00);
\draw[edge] (-30.3, -16.1, -1.00) -- (14.0, 58.0, 147.);
\draw[edge] (64.9, 0.250, -1.00) -- (31.3, -16.1, -1.00);
\draw[edge] (64.9, 0.250, -1.00) -- (82.3, 13.3, 9.44);
\draw[edge] (31.3, -16.1, -1.00) -- (50.3, 58.0, 147.);
\draw[edge] (14.0, 58.0, 147.) -- (50.3, 58.0, 147.);
\draw[edge] (14.0, 58.0, 147.) -- (38.3, 93.4, 147.);
\draw[edge] (82.3, 13.3, 9.44) -- (82.3, 61.0, 47.6);
\draw[edge] (82.3, 13.3, 9.44) -- (82.3, 73.6, 147.);
\draw[edge] (82.3, 61.0, 47.6) -- (82.3, 93.4, 94.7);
\draw[edge] (50.3, 58.0, 147.) -- (82.3, 73.6, 147.);
\draw[edge] (39.6, 93.4, 116.) -- (38.3, 93.4, 147.);
\draw[edge] (39.6, 93.4, 116.) -- (82.3, 93.4, 94.7);
\draw[edge] (38.3, 93.4, 147.) -- (82.3, 93.4, 147.);
\draw[edge] (82.3, 73.6, 147.) -- (82.3, 93.4, 147.);
\draw[edge] (82.3, 93.4, 147.) -- (82.3, 93.4, 94.7);
\node[vertex] at (-30.3, -16.1, -1.00)     {};
\node[vertex] at (64.9, 0.250, -1.00)     {};
\node[vertex] at (31.3, -16.1, -1.00)     {};
\node[vertex] at (14.0, 58.0, 147.)     {};
\node[vertex] at (82.3, 13.3, 9.44)     {};
\node[vertex] at (82.3, 61.0, 47.6)     {};
\node[vertex] at (50.3, 58.0, 147.)     {};
\node[vertex] at (39.6, 93.4, 116.)     {};
\node[vertex] at (38.3, 93.4, 147.)     {};
\node[vertex] at (82.3, 73.6, 147.)     {};
\node[vertex] at (82.3, 93.4, 147.)     {};
\node[vertex] at (82.3, 93.4, 94.7)     {};
\end{tikzpicture} & \begin{tikzpicture}%
	[x={(0.911169cm, -0.319239cm)},
	y={(0.032155cm, 0.685386cm)},
	z={(-0.410777cm, -0.654471cm)},
	scale=0.405000,
	back/.style={loosely dotted, thin},
	edge/.style={color=blue!95!black, thick},
	facet/.style={fill=yellow!95!black,fill opacity=0.500000},
	vertex/.style={inner sep=0pt,circle,draw=green!25!black,fill=green!75!black,thick,anchor=base}]
%
%
\coordinate (-49.4, -37.1, -16.8) at (-49.4, -37.1, -16.8);
\coordinate (-49.0, -37.1, -16.8) at (-49.0, -37.1, -16.8);
\coordinate (-54.1, -44.1, -28.5) at (-54.1, -44.1, -28.5);
\coordinate (-51.9, -44.1, -28.5) at (-51.9, -44.1, -28.5);
\coordinate (-47.4, -34.0, -17.6) at (-47.4, -34.0, -17.6);
\coordinate (-48.6, -41.2, -28.5) at (-48.6, -41.2, -28.5);
\coordinate (-47.4, -34.0, -16.8) at (-47.4, -34.0, -16.8);
\coordinate (-53.9, -43.8, -28.5) at (-53.9, -43.8, -28.5);
\coordinate (-46.0, -35.3, -16.8) at (-46.0, -35.3, -16.8);
\coordinate (-46.0, -34.0, -16.8) at (-46.0, -34.0, -16.8);
\coordinate (-47.0, -41.2, -28.5) at (-47.0, -41.2, -28.5);
\coordinate (-46.0, -34.0, -18.7) at (-46.0, -34.0, -18.7);
\coordinate (-46.0, -39.0, -27.0) at (-46.0, -39.0, -27.0);
\coordinate (-46.0, -40.3, -27.8) at (-46.0, -40.3, -27.8);
\draw[edge,back] (-49.0, -37.1, -16.8) -- (-51.9, -44.1, -28.5);
\draw[edge,back] (-54.1, -44.1, -28.5) -- (-51.9, -44.1, -28.5);
\draw[edge,back] (-51.9, -44.1, -28.5) -- (-47.0, -41.2, -28.5);
\draw[edge,back] (-48.6, -41.2, -28.5) -- (-47.0, -41.2, -28.5);
\draw[edge,back] (-47.0, -41.2, -28.5) -- (-46.0, -40.3, -27.8);
\node[vertex] at (-51.9, -44.1, -28.5)     {};
\node[vertex] at (-47.0, -41.2, -28.5)     {};
\fill[facet] (-46.0, -40.3, -27.8) -- (-46.0, -35.3, -16.8) -- (-46.0, -34.0, -16.8) -- (-46.0, -34.0, -18.7) -- (-46.0, -39.0, -27.0) -- cycle {};
\fill[facet] (-47.4, -34.0, -17.6) -- (-53.9, -43.8, -28.5) -- (-54.1, -44.1, -28.5) -- (-49.4, -37.1, -16.8) -- (-47.4, -34.0, -16.8) -- cycle {};
\fill[facet] (-46.0, -39.0, -27.0) -- (-48.6, -41.2, -28.5) -- (-53.9, -43.8, -28.5) -- (-47.4, -34.0, -17.6) -- (-46.0, -34.0, -18.7) -- cycle {};
\fill[facet] (-46.0, -34.0, -16.8) -- (-47.4, -34.0, -16.8) -- (-49.4, -37.1, -16.8) -- (-49.0, -37.1, -16.8) -- (-46.0, -35.3, -16.8) -- cycle {};
\fill[facet] (-46.0, -34.0, -18.7) -- (-47.4, -34.0, -17.6) -- (-47.4, -34.0, -16.8) -- (-46.0, -34.0, -16.8) -- cycle {};
\draw[edge] (-49.4, -37.1, -16.8) -- (-49.0, -37.1, -16.8);
\draw[edge] (-49.4, -37.1, -16.8) -- (-54.1, -44.1, -28.5);
\draw[edge] (-49.4, -37.1, -16.8) -- (-47.4, -34.0, -16.8);
\draw[edge] (-49.0, -37.1, -16.8) -- (-46.0, -35.3, -16.8);
\draw[edge] (-54.1, -44.1, -28.5) -- (-53.9, -43.8, -28.5);
\draw[edge] (-47.4, -34.0, -17.6) -- (-47.4, -34.0, -16.8);
\draw[edge] (-47.4, -34.0, -17.6) -- (-53.9, -43.8, -28.5);
\draw[edge] (-47.4, -34.0, -17.6) -- (-46.0, -34.0, -18.7);
\draw[edge] (-48.6, -41.2, -28.5) -- (-53.9, -43.8, -28.5);
\draw[edge] (-48.6, -41.2, -28.5) -- (-46.0, -39.0, -27.0);
\draw[edge] (-47.4, -34.0, -16.8) -- (-46.0, -34.0, -16.8);
\draw[edge] (-46.0, -35.3, -16.8) -- (-46.0, -34.0, -16.8);
\draw[edge] (-46.0, -35.3, -16.8) -- (-46.0, -40.3, -27.8);
\draw[edge] (-46.0, -34.0, -16.8) -- (-46.0, -34.0, -18.7);
\draw[edge] (-46.0, -34.0, -18.7) -- (-46.0, -39.0, -27.0);
\draw[edge] (-46.0, -39.0, -27.0) -- (-46.0, -40.3, -27.8);
\node[vertex] at (-49.4, -37.1, -16.8)     {};
\node[vertex] at (-49.0, -37.1, -16.8)     {};
\node[vertex] at (-54.1, -44.1, -28.5)     {};
\node[vertex] at (-47.4, -34.0, -17.6)     {};
\node[vertex] at (-48.6, -41.2, -28.5)     {};
\node[vertex] at (-47.4, -34.0, -16.8)     {};
\node[vertex] at (-53.9, -43.8, -28.5)     {};
\node[vertex] at (-46.0, -35.3, -16.8)     {};
\node[vertex] at (-46.0, -34.0, -16.8)     {};
\node[vertex] at (-46.0, -34.0, -18.7)     {};
\node[vertex] at (-46.0, -39.0, -27.0)     {};
\node[vertex] at (-46.0, -40.3, -27.8)     {};
\end{tikzpicture} & \begin{tikzpicture}%
	[x={(-0.672434cm, -0.630739cm)},
	y={(-0.098279cm, -0.442545cm)},
	z={(-0.733603cm, 0.637434cm)},
	scale=0.018000,
	back/.style={loosely dotted, thin},
	edge/.style={color=blue!95!black, thick},
	facet/.style={fill=green!95!black,fill opacity=0.500000},
	vertex/.style={inner sep=0pt,circle,draw=green!25!black,fill=green!75!black,thick,anchor=base}]
%
%
\coordinate (-30.8, 4.94, 109.) at (-30.8, 4.94, 109.);
\coordinate (-89.3, -68.2, -1.00) at (-89.3, -68.2, -1.00);
\coordinate (-69.7, -68.2, -1.00) at (-69.7, -68.2, -1.00);
\coordinate (-3.93, 4.94, 109.) at (-3.93, 4.94, 109.);
\coordinate (-58.0, -29.0, -1.00) at (-58.0, -29.0, -1.00);
\coordinate (-0.0778, 43.4, 85.9) at (-0.0778, 43.4, 85.9);
\coordinate (-0.0778, 43.4, 109.) at (-0.0778, 43.4, 109.);
\coordinate (-1.00, -0.500, -1.00) at (-1.00, -0.500, -1.00);
\coordinate (32.0, 32.5, 32.0) at (32.0, 32.5, 32.0);
\coordinate (11.5, -0.500, -1.00) at (11.5, -0.500, -1.00);
\coordinate (32.0, 20.0, 19.5) at (32.0, 20.0, 19.5);
\coordinate (32.0, 43.4, 53.8) at (32.0, 43.4, 53.8);
\coordinate (32.0, 34.9, 109.) at (32.0, 34.9, 109.);
\coordinate (32.0, 43.4, 109.) at (32.0, 43.4, 109.);
\draw[edge,back] (-89.3, -68.2, -1.00) -- (-69.7, -68.2, -1.00);
\draw[edge,back] (-69.7, -68.2, -1.00) -- (-3.93, 4.94, 109.);
\draw[edge,back] (-69.7, -68.2, -1.00) -- (11.5, -0.500, -1.00);
\draw[edge,back] (32.0, 20.0, 19.5) -- (32.0, 34.9, 109.);
\node[vertex] at (-69.7, -68.2, -1.00)     {};
\fill[facet] (32.0, 43.4, 109.) -- (-0.0778, 43.4, 109.) -- (-30.8, 4.94, 109.) -- (-3.93, 4.94, 109.) -- (32.0, 34.9, 109.) -- cycle {};
\fill[facet] (32.0, 43.4, 109.) -- (-0.0778, 43.4, 109.) -- (-0.0778, 43.4, 85.9) -- (32.0, 43.4, 53.8) -- cycle {};
\fill[facet] (32.0, 43.4, 53.8) -- (-0.0778, 43.4, 85.9) -- (-58.0, -29.0, -1.00) -- (-1.00, -0.500, -1.00) -- (32.0, 32.5, 32.0) -- cycle {};
\fill[facet] (32.0, 20.0, 19.5) -- (32.0, 32.5, 32.0) -- (-1.00, -0.500, -1.00) -- (11.5, -0.500, -1.00) -- cycle {};
\fill[facet] (-0.0778, 43.4, 109.) -- (-30.8, 4.94, 109.) -- (-89.3, -68.2, -1.00) -- (-58.0, -29.0, -1.00) -- (-0.0778, 43.4, 85.9) -- cycle {};
\draw[edge] (-30.8, 4.94, 109.) -- (-89.3, -68.2, -1.00);
\draw[edge] (-30.8, 4.94, 109.) -- (-3.93, 4.94, 109.);
\draw[edge] (-30.8, 4.94, 109.) -- (-0.0778, 43.4, 109.);
\draw[edge] (-89.3, -68.2, -1.00) -- (-58.0, -29.0, -1.00);
\draw[edge] (-3.93, 4.94, 109.) -- (32.0, 34.9, 109.);
\draw[edge] (-58.0, -29.0, -1.00) -- (-0.0778, 43.4, 85.9);
\draw[edge] (-58.0, -29.0, -1.00) -- (-1.00, -0.500, -1.00);
\draw[edge] (-0.0778, 43.4, 85.9) -- (-0.0778, 43.4, 109.);
\draw[edge] (-0.0778, 43.4, 85.9) -- (32.0, 43.4, 53.8);
\draw[edge] (-0.0778, 43.4, 109.) -- (32.0, 43.4, 109.);
\draw[edge] (-1.00, -0.500, -1.00) -- (32.0, 32.5, 32.0);
\draw[edge] (-1.00, -0.500, -1.00) -- (11.5, -0.500, -1.00);
\draw[edge] (32.0, 32.5, 32.0) -- (32.0, 20.0, 19.5);
\draw[edge] (32.0, 32.5, 32.0) -- (32.0, 43.4, 53.8);
\draw[edge] (11.5, -0.500, -1.00) -- (32.0, 20.0, 19.5);
\draw[edge] (32.0, 43.4, 53.8) -- (32.0, 43.4, 109.);
\draw[edge] (32.0, 34.9, 109.) -- (32.0, 43.4, 109.);
\node[vertex] at (-30.8, 4.94, 109.)     {};
\node[vertex] at (-89.3, -68.2, -1.00)     {};
\node[vertex] at (-3.93, 4.94, 109.)     {};
\node[vertex] at (-58.0, -29.0, -1.00)     {};
\node[vertex] at (-0.0778, 43.4, 85.9)     {};
\node[vertex] at (-0.0778, 43.4, 109.)     {};
\node[vertex] at (-1.00, -0.500, -1.00)     {};
\node[vertex] at (32.0, 32.5, 32.0)     {};
\node[vertex] at (11.5, -0.500, -1.00)     {};
\node[vertex] at (32.0, 20.0, 19.5)     {};
\node[vertex] at (32.0, 43.4, 53.8)     {};
\node[vertex] at (32.0, 34.9, 109.)     {};
\node[vertex] at (32.0, 43.4, 109.)     {};
\end{tikzpicture}\\[0.5cm]\hline
		&  &  & \\
		$Q=231213213$ with $c^9$: & $Q=213213213$ with $c^{11}$: & $Q=231231213$ with $c^5$: & $Q=213213213$ with $c^{5}$: \\[1em]
		\begin{tikzpicture}%
	[x={(-0.452930cm, -0.621780cm)},
	y={(0.675566cm, 0.228300cm)},
	z={(-0.581777cm, 0.749179cm)},
	scale=0.0112500,
	back/.style={loosely dotted, thin},
	edge/.style={color=blue!95!black, thick},
	facet/.style={fill=red!95!black,fill opacity=0.500000},
	vertex/.style={inner sep=0pt,circle,draw=green!25!black,fill=green!75!black,thick,anchor=base}]
%
%
\coordinate (-186., -290., -74.7) at (-186., -290., -74.7);
\coordinate (-252., -234., -78.5) at (-252., -234., -78.5);
\coordinate (-91.5, -144., 101.) at (-91.5, -144., 101.);
\coordinate (-252., -312., -137.) at (-252., -312., -137.);
\coordinate (-138., -101., 101.) at (-138., -101., 101.);
\coordinate (-82.5, 156., 101.) at (-82.5, 156., 101.);
\coordinate (-108., 156., 46.8) at (-108., 156., 46.8);
\coordinate (-134., -90.1, -108.) at (-134., -90.1, -108.);
\coordinate (-54.9, 156., -15.6) at (-54.9, 156., -15.6);
\coordinate (0.750, -0.167, -0.100) at (0.750, -0.167, -0.100);
\coordinate (40.6, 61.9, 37.1) at (40.6, 61.9, 37.1);
\coordinate (40.6, 61.9, 101.) at (40.6, 61.9, 101.);
\coordinate (40.6, 156., 51.3) at (40.6, 156., 51.3);
\coordinate (40.6, 156., 101.) at (40.6, 156., 101.);
\draw[edge,back] (-252., -234., -78.5) -- (-252., -312., -137.);
\draw[edge,back] (-252., -234., -78.5) -- (-138., -101., 101.);
\draw[edge,back] (-252., -234., -78.5) -- (-108., 156., 46.8);
\node[vertex] at (-252., -234., -78.5)     {};
\fill[facet] (40.6, 156., 101.) -- (-82.5, 156., 101.) -- (-108., 156., 46.8) -- (-54.9, 156., -15.6) -- (40.6, 156., 51.3) -- cycle {};
\fill[facet] (40.6, 156., 101.) -- (-82.5, 156., 101.) -- (-138., -101., 101.) -- (-91.5, -144., 101.) -- (40.6, 61.9, 101.) -- cycle {};
\fill[facet] (40.6, 156., 101.) -- (40.6, 61.9, 101.) -- (40.6, 61.9, 37.1) -- (40.6, 156., 51.3) -- cycle {};
\fill[facet] (40.6, 156., 51.3) -- (-54.9, 156., -15.6) -- (-134., -90.1, -108.) -- (0.750, -0.167, -0.100) -- (40.6, 61.9, 37.1) -- cycle {};
\fill[facet] (0.750, -0.167, -0.100) -- (-186., -290., -74.7) -- (-252., -312., -137.) -- (-134., -90.1, -108.) -- cycle {};
\fill[facet] (40.6, 61.9, 101.) -- (-91.5, -144., 101.) -- (-186., -290., -74.7) -- (0.750, -0.167, -0.100) -- (40.6, 61.9, 37.1) -- cycle {};
\draw[edge] (-186., -290., -74.7) -- (-91.5, -144., 101.);
\draw[edge] (-186., -290., -74.7) -- (-252., -312., -137.);
\draw[edge] (-186., -290., -74.7) -- (0.750, -0.167, -0.100);
\draw[edge] (-91.5, -144., 101.) -- (-138., -101., 101.);
\draw[edge] (-91.5, -144., 101.) -- (40.6, 61.9, 101.);
\draw[edge] (-252., -312., -137.) -- (-134., -90.1, -108.);
\draw[edge] (-138., -101., 101.) -- (-82.5, 156., 101.);
\draw[edge] (-82.5, 156., 101.) -- (-108., 156., 46.8);
\draw[edge] (-82.5, 156., 101.) -- (40.6, 156., 101.);
\draw[edge] (-108., 156., 46.8) -- (-54.9, 156., -15.6);
\draw[edge] (-134., -90.1, -108.) -- (-54.9, 156., -15.6);
\draw[edge] (-134., -90.1, -108.) -- (0.750, -0.167, -0.100);
\draw[edge] (-54.9, 156., -15.6) -- (40.6, 156., 51.3);
\draw[edge] (0.750, -0.167, -0.100) -- (40.6, 61.9, 37.1);
\draw[edge] (40.6, 61.9, 37.1) -- (40.6, 61.9, 101.);
\draw[edge] (40.6, 61.9, 37.1) -- (40.6, 156., 51.3);
\draw[edge] (40.6, 61.9, 101.) -- (40.6, 156., 101.);
\draw[edge] (40.6, 156., 51.3) -- (40.6, 156., 101.);
\node[vertex] at (-186., -290., -74.7)     {};
\node[vertex] at (-91.5, -144., 101.)     {};
\node[vertex] at (-252., -312., -137.)     {};
\node[vertex] at (-138., -101., 101.)     {};
\node[vertex] at (-82.5, 156., 101.)     {};
\node[vertex] at (-108., 156., 46.8)     {};
\node[vertex] at (-134., -90.1, -108.)     {};
\node[vertex] at (-54.9, 156., -15.6)     {};
\node[vertex] at (0.750, -0.167, -0.100)     {};
\node[vertex] at (40.6, 61.9, 37.1)     {};
\node[vertex] at (40.6, 61.9, 101.)     {};
\node[vertex] at (40.6, 156., 51.3)     {};
\node[vertex] at (40.6, 156., 101.)     {};
\end{tikzpicture} & \begin{tikzpicture}%
	[x={(0.068016cm, 0.964094cm)},
	y={(0.664353cm, 0.148190cm)},
	z={(-0.744318cm, 0.220369cm)},
	scale=0.010,
	back/.style={loosely dotted, thin},
	edge/.style={color=blue!95!black, thick},
	facet/.style={fill=purple!95!black,fill opacity=0.500000},
	vertex/.style={inner sep=0pt,circle,draw=green!25!black,fill=green!75!black,thick,anchor=base}]
%
%
\coordinate (-133., 248., 117.) at (-133., 248., 117.);
\coordinate (-185., 88.2, 80.5) at (-185., 88.2, 80.5);
\coordinate (-24.7, 248., 50.9) at (-24.7, 248., 50.9);
\coordinate (-157., 109., 202.) at (-157., 109., 202.);
\coordinate (-18.5, 9.94, 202.) at (-18.5, 9.94, 202.);
\coordinate (-117., 248., 202.) at (-117., 248., 202.);
\coordinate (-35.5, -62.7, -51.7) at (-35.5, -62.7, -51.7);
\coordinate (-41.7, -65.4, -73.1) at (-41.7, -65.4, -73.1);
\coordinate (-24.7, -24.0, -65.8) at (-24.7, -24.0, -65.8);
\coordinate (3.53, 4.20, 6.80) at (3.53, 4.20, 6.80);
\coordinate (15.9, 28.9, 43.8) at (15.9, 28.9, 43.8);
\coordinate (15.9, 248., 202.) at (15.9, 248., 202.);
\coordinate (15.9, 59.1, 202.) at (15.9, 59.1, 202.);
\coordinate (15.9, 248., 138.) at (15.9, 248., 138.);
\draw[edge,back] (-133., 248., 117.) -- (-117., 248., 202.);
\draw[edge,back] (-157., 109., 202.) -- (-117., 248., 202.);
\draw[edge,back] (-117., 248., 202.) -- (15.9, 248., 202.);
\node[vertex] at (-117., 248., 202.)     {};
\fill[facet] (15.9, 248., 138.) -- (-24.7, 248., 50.9) -- (-24.7, -24.0, -65.8) -- (3.53, 4.20, 6.80) -- (15.9, 28.9, 43.8) -- cycle {};
\fill[facet] (15.9, 248., 202.) -- (15.9, 248., 138.) -- (15.9, 28.9, 43.8) -- (15.9, 59.1, 202.) -- cycle {};
\fill[facet] (15.9, 59.1, 202.) -- (-18.5, 9.94, 202.) -- (-35.5, -62.7, -51.7) -- (3.53, 4.20, 6.80) -- (15.9, 28.9, 43.8) -- cycle {};
\fill[facet] (-41.7, -65.4, -73.1) -- (-185., 88.2, 80.5) -- (-157., 109., 202.) -- (-18.5, 9.94, 202.) -- (-35.5, -62.7, -51.7) -- cycle {};
\fill[facet] (-24.7, -24.0, -65.8) -- (-24.7, 248., 50.9) -- (-133., 248., 117.) -- (-185., 88.2, 80.5) -- (-41.7, -65.4, -73.1) -- cycle {};
\fill[facet] (3.53, 4.20, 6.80) -- (-35.5, -62.7, -51.7) -- (-41.7, -65.4, -73.1) -- (-24.7, -24.0, -65.8) -- cycle {};
\draw[edge] (-133., 248., 117.) -- (-185., 88.2, 80.5);
\draw[edge] (-133., 248., 117.) -- (-24.7, 248., 50.9);
\draw[edge] (-185., 88.2, 80.5) -- (-157., 109., 202.);
\draw[edge] (-185., 88.2, 80.5) -- (-41.7, -65.4, -73.1);
\draw[edge] (-24.7, 248., 50.9) -- (-24.7, -24.0, -65.8);
\draw[edge] (-24.7, 248., 50.9) -- (15.9, 248., 138.);
\draw[edge] (-157., 109., 202.) -- (-18.5, 9.94, 202.);
\draw[edge] (-18.5, 9.94, 202.) -- (-35.5, -62.7, -51.7);
\draw[edge] (-18.5, 9.94, 202.) -- (15.9, 59.1, 202.);
\draw[edge] (-35.5, -62.7, -51.7) -- (-41.7, -65.4, -73.1);
\draw[edge] (-35.5, -62.7, -51.7) -- (3.53, 4.20, 6.80);
\draw[edge] (-41.7, -65.4, -73.1) -- (-24.7, -24.0, -65.8);
\draw[edge] (-24.7, -24.0, -65.8) -- (3.53, 4.20, 6.80);
\draw[edge] (3.53, 4.20, 6.80) -- (15.9, 28.9, 43.8);
\draw[edge] (15.9, 28.9, 43.8) -- (15.9, 59.1, 202.);
\draw[edge] (15.9, 28.9, 43.8) -- (15.9, 248., 138.);
\draw[edge] (15.9, 248., 202.) -- (15.9, 59.1, 202.);
\draw[edge] (15.9, 248., 202.) -- (15.9, 248., 138.);
\node[vertex] at (-133., 248., 117.)     {};
\node[vertex] at (-185., 88.2, 80.5)     {};
\node[vertex] at (-24.7, 248., 50.9)     {};
\node[vertex] at (-157., 109., 202.)     {};
\node[vertex] at (-18.5, 9.94, 202.)     {};
\node[vertex] at (-35.5, -62.7, -51.7)     {};
\node[vertex] at (-41.7, -65.4, -73.1)     {};
\node[vertex] at (-24.7, -24.0, -65.8)     {};
\node[vertex] at (3.53, 4.20, 6.80)     {};
\node[vertex] at (15.9, 28.9, 43.8)     {};
\node[vertex] at (15.9, 248., 202.)     {};
\node[vertex] at (15.9, 59.1, 202.)     {};
\node[vertex] at (15.9, 248., 138.)     {};
\end{tikzpicture} & \begin{tikzpicture}%
	[x={(-0.493225cm, -0.848744cm)},
	y={(0.320685cm, -0.381172cm)},
	z={(-0.808635cm, 0.366526cm)},
	scale=0.01125000,
	back/.style={loosely dotted, thin},
	edge/.style={color=blue!95!black, thick},
	facet/.style={fill=pink!95!black,fill opacity=0.500000},
	vertex/.style={inner sep=0pt,circle,draw=green!25!black,fill=green!75!black,thick,anchor=base}]
%
%
\coordinate (-81.0, -64.4, -52.6) at (-81.0, -64.4, -52.6);
\coordinate (-9.00, 175., 19.3) at (-9.00, 175., 19.3);
\coordinate (-71.0, -167., -71.0) at (-71.0, -167., -71.0);
\coordinate (-104., -189., -146.) at (-104., -189., -146.);
\coordinate (-2.71, 175., -5.80) at (-2.71, 175., -5.80);
\coordinate (-62.3, -62.8, -125.) at (-62.3, -62.8, -125.);
\coordinate (0.375, -0.125, 0.375) at (0.375, -0.125, 0.375);
\coordinate (48.1, 95.3, 95.8) at (48.1, 95.3, 95.8);
\coordinate (48.1, 175., 95.8) at (48.1, 175., 95.8);
\coordinate (48.1, 70.7, 170.) at (48.1, 70.7, 170.);
\coordinate (19.2, -6.28, 170.) at (19.2, -6.28, 170.);
\coordinate (48.1, 175., 170.) at (48.1, 175., 170.);
\coordinate (7.89, 54.1, 170.) at (7.89, 54.1, 170.);
\coordinate (33.9, 175., 170.) at (33.9, 175., 170.);
\draw[edge,back] (-71.0, -167., -71.0) -- (0.375, -0.125, 0.375);
\draw[edge,back] (-62.3, -62.8, -125.) -- (0.375, -0.125, 0.375);
\draw[edge,back] (0.375, -0.125, 0.375) -- (48.1, 95.3, 95.8);
\draw[edge,back] (48.1, 95.3, 95.8) -- (48.1, 175., 95.8);
\draw[edge,back] (48.1, 95.3, 95.8) -- (48.1, 70.7, 170.);
\node[vertex] at (0.375, -0.125, 0.375)     {};
\node[vertex] at (48.1, 95.3, 95.8)     {};
\fill[facet] (33.9, 175., 170.) -- (48.1, 175., 170.) -- (48.1, 70.7, 170.) -- (19.2, -6.28, 170.) -- (7.89, 54.1, 170.) -- cycle {};
\fill[facet] (33.9, 175., 170.) -- (-9.00, 175., 19.3) -- (-2.71, 175., -5.80) -- (48.1, 175., 95.8) -- (48.1, 175., 170.) -- cycle {};
\fill[facet] (33.9, 175., 170.) -- (-9.00, 175., 19.3) -- (-81.0, -64.4, -52.6) -- (7.89, 54.1, 170.) -- cycle {};
\fill[facet] (7.89, 54.1, 170.) -- (-81.0, -64.4, -52.6) -- (-104., -189., -146.) -- (-71.0, -167., -71.0) -- (19.2, -6.28, 170.) -- cycle {};
\fill[facet] (-62.3, -62.8, -125.) -- (-104., -189., -146.) -- (-81.0, -64.4, -52.6) -- (-9.00, 175., 19.3) -- (-2.71, 175., -5.80) -- cycle {};
\draw[edge] (-81.0, -64.4, -52.6) -- (-9.00, 175., 19.3);
\draw[edge] (-81.0, -64.4, -52.6) -- (-104., -189., -146.);
\draw[edge] (-81.0, -64.4, -52.6) -- (7.89, 54.1, 170.);
\draw[edge] (-9.00, 175., 19.3) -- (-2.71, 175., -5.80);
\draw[edge] (-9.00, 175., 19.3) -- (33.9, 175., 170.);
\draw[edge] (-71.0, -167., -71.0) -- (-104., -189., -146.);
\draw[edge] (-71.0, -167., -71.0) -- (19.2, -6.28, 170.);
\draw[edge] (-104., -189., -146.) -- (-62.3, -62.8, -125.);
\draw[edge] (-2.71, 175., -5.80) -- (-62.3, -62.8, -125.);
\draw[edge] (-2.71, 175., -5.80) -- (48.1, 175., 95.8);
\draw[edge] (48.1, 175., 95.8) -- (48.1, 175., 170.);
\draw[edge] (48.1, 70.7, 170.) -- (19.2, -6.28, 170.);
\draw[edge] (48.1, 70.7, 170.) -- (48.1, 175., 170.);
\draw[edge] (19.2, -6.28, 170.) -- (7.89, 54.1, 170.);
\draw[edge] (48.1, 175., 170.) -- (33.9, 175., 170.);
\draw[edge] (7.89, 54.1, 170.) -- (33.9, 175., 170.);
\node[vertex] at (-81.0, -64.4, -52.6)     {};
\node[vertex] at (-9.00, 175., 19.3)     {};
\node[vertex] at (-71.0, -167., -71.0)     {};
\node[vertex] at (-104., -189., -146.)     {};
\node[vertex] at (-2.71, 175., -5.80)     {};
\node[vertex] at (-62.3, -62.8, -125.)     {};
\node[vertex] at (48.1, 175., 95.8)     {};
\node[vertex] at (48.1, 70.7, 170.)     {};
\node[vertex] at (19.2, -6.28, 170.)     {};
\node[vertex] at (48.1, 175., 170.)     {};
\node[vertex] at (7.89, 54.1, 170.)     {};
\node[vertex] at (33.9, 175., 170.)     {};
\end{tikzpicture} & \begin{tikzpicture}%
	[x={(-0.367374cm, 0.892104cm)},
	y={(-0.884812cm, -0.422378cm)},
	z={(0.286607cm, -0.160463cm)},
	scale=0.0247500,
	back/.style={loosely dotted, thin},
	edge/.style={color=blue!95!black, thick},
	facet/.style={fill=cyan!95!black,fill opacity=0.500000},
	vertex/.style={inner sep=0pt,circle,draw=green!25!black,fill=green!75!black,thick,anchor=base}]
%
%
\coordinate (-24.4, 4.74, -119.) at (-24.4, 4.74, -119.);
\coordinate (-29.2, -29.2, -264.) at (-29.2, -29.2, -264.);
\coordinate (-55.1, -87.4, -303.) at (-55.1, -87.4, -303.);
\coordinate (-36.7, -73.6, -110.) at (-36.7, -73.6, -110.);
\coordinate (-4.05, -8.26, 151.) at (-4.05, -8.26, 151.);
\coordinate (0.167, 0.167, 0.500) at (0.167, 0.167, 0.500);
\coordinate (20.0, 39.8, 120.) at (20.0, 39.8, 120.);
\coordinate (20.0, 68.1, 34.7) at (20.0, 68.1, 34.7);
\coordinate (20.0, 39.8, 151.) at (20.0, 39.8, 151.);
\coordinate (20.0, 68.1, 151.) at (20.0, 68.1, 151.);
\coordinate (7.27, 68.1, -118.) at (7.27, 68.1, -118.);
\coordinate (2.62, 31.7, 151.) at (2.62, 31.7, 151.);
\coordinate (2.79, 68.1, -64.2) at (2.79, 68.1, -64.2);
\coordinate (16.3, 68.1, 151.) at (16.3, 68.1, 151.);
\draw[edge,back] (-24.4, 4.74, -119.) -- (-55.1, -87.4, -303.);
\draw[edge,back] (-24.4, 4.74, -119.) -- (2.62, 31.7, 151.);
\draw[edge,back] (-24.4, 4.74, -119.) -- (2.79, 68.1, -64.2);
\node[vertex] at (-24.4, 4.74, -119.)     {};
\fill[facet] (16.3, 68.1, 151.) -- (20.0, 68.1, 151.) -- (20.0, 68.1, 34.7) -- (7.27, 68.1, -118.) -- (2.79, 68.1, -64.2) -- cycle {};
\fill[facet] (16.3, 68.1, 151.) -- (20.0, 68.1, 151.) -- (20.0, 39.8, 151.) -- (-4.05, -8.26, 151.) -- (2.62, 31.7, 151.) -- cycle {};
\fill[facet] (0.167, 0.167, 0.500) -- (-29.2, -29.2, -264.) -- (-55.1, -87.4, -303.) -- (-36.7, -73.6, -110.) -- cycle {};
\fill[facet] (20.0, 39.8, 151.) -- (-4.05, -8.26, 151.) -- (-36.7, -73.6, -110.) -- (0.167, 0.167, 0.500) -- (20.0, 39.8, 120.) -- cycle {};
\fill[facet] (7.27, 68.1, -118.) -- (-29.2, -29.2, -264.) -- (0.167, 0.167, 0.500) -- (20.0, 39.8, 120.) -- (20.0, 68.1, 34.7) -- cycle {};
\fill[facet] (20.0, 68.1, 151.) -- (20.0, 68.1, 34.7) -- (20.0, 39.8, 120.) -- (20.0, 39.8, 151.) -- cycle {};
\draw[edge] (-29.2, -29.2, -264.) -- (-55.1, -87.4, -303.);
\draw[edge] (-29.2, -29.2, -264.) -- (0.167, 0.167, 0.500);
\draw[edge] (-29.2, -29.2, -264.) -- (7.27, 68.1, -118.);
\draw[edge] (-55.1, -87.4, -303.) -- (-36.7, -73.6, -110.);
\draw[edge] (-36.7, -73.6, -110.) -- (-4.05, -8.26, 151.);
\draw[edge] (-36.7, -73.6, -110.) -- (0.167, 0.167, 0.500);
\draw[edge] (-4.05, -8.26, 151.) -- (20.0, 39.8, 151.);
\draw[edge] (-4.05, -8.26, 151.) -- (2.62, 31.7, 151.);
\draw[edge] (0.167, 0.167, 0.500) -- (20.0, 39.8, 120.);
\draw[edge] (20.0, 39.8, 120.) -- (20.0, 68.1, 34.7);
\draw[edge] (20.0, 39.8, 120.) -- (20.0, 39.8, 151.);
\draw[edge] (20.0, 68.1, 34.7) -- (20.0, 68.1, 151.);
\draw[edge] (20.0, 68.1, 34.7) -- (7.27, 68.1, -118.);
\draw[edge] (20.0, 39.8, 151.) -- (20.0, 68.1, 151.);
\draw[edge] (20.0, 68.1, 151.) -- (16.3, 68.1, 151.);
\draw[edge] (7.27, 68.1, -118.) -- (2.79, 68.1, -64.2);
\draw[edge] (2.62, 31.7, 151.) -- (16.3, 68.1, 151.);
\draw[edge] (2.79, 68.1, -64.2) -- (16.3, 68.1, 151.);
\node[vertex] at (-29.2, -29.2, -264.)     {};
\node[vertex] at (-55.1, -87.4, -303.)     {};
\node[vertex] at (-36.7, -73.6, -110.)     {};
\node[vertex] at (-4.05, -8.26, 151.)     {};
\node[vertex] at (0.167, 0.167, 0.500)     {};
\node[vertex] at (20.0, 39.8, 120.)     {};
\node[vertex] at (20.0, 68.1, 34.7)     {};
\node[vertex] at (20.0, 39.8, 151.)     {};
\node[vertex] at (20.0, 68.1, 151.)     {};
\node[vertex] at (7.27, 68.1, -118.)     {};
\node[vertex] at (2.62, 31.7, 151.)     {};
\node[vertex] at (2.79, 68.1, -64.2)     {};
\node[vertex] at (16.3, 68.1, 151.)     {};
\end{tikzpicture}\\
		\multicolumn{2}{c}{$c=s_1s_2s_3$} & \multicolumn{2}{c}{$c=s_2s_1s_3$}
	\end{tabular}
	\caption{Eight different polytopal realizations of the 3-dimensional associahedron using embeddings of a word $Q$ into some $c^m$ for the Coxeter elements~$c=s_1s_2s_3$ and~$c=s_2s_1s_3$.}\label{fig:associahedra_3d}
\end{figure}

Corollary~\ref{cor:fan_bipartite213} can also be restated in terms of multi-associahedra. Before giving a precise statement let us recall some basic definitions. 
Let $k\geq 1$ and $\ell \geq 2k+1$ be two positive integers. We say that a set 
of $k+1$ diagonals of a convex $\ell$-gon forms a \defn{$(k+1)$-crossing} 
if all the diagonals in this set are pairwise crossing. A diagonal is called 
\defn{$k$-relevant} if it is contained in some $(k+1)$-crossing, that is, if 
there are at least $k$ vertices of the $\ell$-gon on each side of the diagonal.
The \defn{simplicial multi-associahedron} $\Delta_{\ell,k}$ is the simplicial
complex of $(k+1)$-crossing-free sets of $k$-relevant diagonals of a convex 
$\ell$-gon.
We are particularly interested in the case $\ell=2k+4$, where the polytopality conjecture is still open, see Table~\ref{tab:polytopality_multi-associahedra}. In this case, there are exactly $3k+6$ $k$-relevant diagonals, which are in correspondence with the columns of the matrix~$M_{213,k+2}$ as follows:
cyclically label the vertices of the $\ell$-gon from 1 up to $\ell$. The first $k$-relevant diagonal in lexicographic order corresponds to the last column of~$M_{213,k+2}$, the other $k$-relevant diagonals correspond to the other columns in the order they appear in lexicographic order, see Example~\ref{ex:multi_10_3}. Let $\mathcal F_{k}$ be the simplicial fan in $\R^{3k}$ whose rays are the column vectors of~$M_{213,k+2}$, and whose cones are spanned by the column vectors corresponding to faces of $\Delta_{2k+4,k}$. Using this terminology, Corollary~\ref{cor:fan_bipartite213} can be read as follows. 

\begin{corollary}
The fan $\mathcal F_{k}$ is a complete simplicial fan realization of the simplicial multi-associahedron $\Delta_{2k+4,k}$.
\end{corollary}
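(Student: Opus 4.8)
The plan is to obtain the statement as a direct reformulation of Corollary~\ref{cor:fan_bipartite213}, after specializing $m$ and identifying the subword complex appearing there with the multi-associahedron. First I would set $m=k+2$. Since $k\geq 1$ we have $m\geq 3$, so Corollary~\ref{cor:fan_bipartite213} applies verbatim to the bipartite Coxeter element $c=(s_2,s_1,s_3)$ of type $A_3$ and the word $Q=c^{k+2}$: the fan $\fanp{M_{213,k+2}}$ is a complete simplicial fan realization of $\subwordComplex[c^{k+2}]$.

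The crux is then to recognize $\subwordComplex[c^{k+2}]$ as $\Delta_{2k+4,k}$. I would invoke the known isomorphism between multi-associahedra and type $A$ subword complexes~\cite{PilaudPocchiola,Stump} (see also~\cite{ceballos_subword_2013}), under which $\Delta_{\ell,k}$ is realized as a subword complex of type $A_{\ell-2k-1}$ on the word $c^k\woc$. For $\ell=2k+4$ the type is $A_{\ell-2k-1}=A_3$, exactly the setting of Corollary~\ref{cor:fan_bipartite213}. The key computation is that for the bipartite Coxeter element of type $A_3$ the $c$-sorting word $\woc$ of the longest element is reduced of length $N=6$ and equals $c^2$ as a word (one checks directly that $s_2s_1s_3s_2s_1s_3$ is a reduced word for $\wo=4321$ in $S_4$). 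Hence $c^k\woc=c^{k+2}$, and the subword complex in the correspondence is precisely $\subwordComplex[c^{k+2}]$.

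It remains to check that the combinatorial labels match. The two complexes have the same facet size: a $k$-triangulation of a convex $(2k+4)$-gon uses $k\bigl((2k+4)-2k-1\bigr)=3k$ diagonals, while a facet of $\subwordComplex[c^{k+2}]$ has $r-N=3(k+2)-6=3k$ elements. Likewise the $3k+6$ $k$-relevant diagonals are in bijection with the $3(k+2)=3k+6$ positions of $c^{k+2}$, that is, with the rays of the fan, which are the columns of $M_{213,k+2}$. The explicit lexicographic labeling of the diagonals described just before the statement identifies this bijection, so that faces of $\Delta_{2k+4,k}$ correspond to faces of $\subwordComplex[c^{k+2}]$. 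Consequently $\mathcal F_k$ is, by its very definition, the fan $\fanp{M_{213,k+2}}$ with rays relabeled by diagonals, and the conclusion follows.

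The only nonroutine point, and the step I expect to require the most care, is pinning down the correspondence on rays: one must confirm both that $\woc=c^2$ so that $Q=c^{k+2}$, and that the lexicographic ordering of $k$-relevant diagonals is compatible, through the isomorphism of~\cite{PilaudPocchiola,Stump}, with the left-to-right ordering of positions of $c^{k+2}$. Once this dictionary is fixed, the completeness and simpliciality of $\mathcal F_k$ are inherited immediately from Corollary~\ref{cor:fan_bipartite213}, and no further geometric argument is needed.
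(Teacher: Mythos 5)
Your proposal is correct and follows essentially the same route as the paper: the paper presents this corollary as a direct restatement of Corollary~\ref{cor:fan_bipartite213} with $m=k+2$, using the isomorphism $\Delta_{2k+4,k}\cong\subwordComplex[c^{k+2}]$ of~\cite{PilaudPocchiola,Stump} (where $\woc=c^2$ for the bipartite element, so $c^k\woc=c^{k+2}$) together with the fixed lexicographic dictionary between $k$-relevant diagonals and the columns of $M_{213,k+2}$. Your facet-size check and your flagging of the ray-labeling compatibility are exactly the points the paper delegates to the cited correspondence and to Example~\ref{ex:multi_10_3}.
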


\begin{example}[Multi-associahedron $\Delta_{10,3}$]
\label{ex:multi_10_3}
The multi-associahedron $\Delta_{10,3}$ can be realized as the complete simplicial fan~$\mathcal F_{3}$. The rays are the column vectors of the matrix~$M_{213,5}$ below, and the cones are spanned by the column vectors corresponding to faces of $\Delta_{10,3}$. The pairs of numbers on top of the matrix are the 3-relevant diagonals of the 10-gon associated to each of the columns of the matrix. 

{\small
\[M_{213,5}=
\begin{blockarray}{rrrrrrrrrrrrrrr}
1,6 & 1,7 & 2,6 & 2,7 & 2,8 & 3,7 & 3,8 & 3,9 & 4,8 & 4,9 & 4,10 & 5,9 & 5,10 & 6,10  & 1,5\\
\begin{block}{(rrrrrrrrrrrrrrr)}
-1 &  0 &  0 &  0 &  0 &  0 &  0 &  0 &  0 & 16 & 12  & 12 & -15  & -12  & -12  \\
0 &  -1 &  0 &  0 &  0 &  0 &  0 &  0 &  0 & -6 & -2  & -6 & 6  & 3  & 6  \\
0 &  0 &  -1 &  0 &  0 &  0 &  0 &  0 &  0 & -6 & -6  & -2 & 6  & 6  & 3  \\
0 &  0 &  0 &  -1 &  0 &  0 &  0 &  0 &  0 &9  & 6  & 6 & -8  & -6  & -6  \\
0 &  0 &  0 &  0 &  -1 &  0 &  0 &  0 &  0 & -3 & 0  & -3 & 3  & 1  & 3  \\
0 &  0 &  0 &  0 &  0 &  -1 &  0 &  0 &  0 & -3 & -3  &0  & 3  & 3  & 1  \\
0 &  0 &  0 &  0 &  0 &  0 &  -1 &  0 &  0 & 4 & 2  & 2 & -3  & -2  & -2  \\
0 &  0 &  0 &  0 &  0 &  0 &  0 &  -1 &  0 & -1 & 1  & -1 & 1  & 0  & 1  \\
0 &  0 &  0 &  0 &  0 &  0 &  0 &  0 &  -1 & -1 & -1  & 1 & 1  & 1  & 0 \\
\end{block}
\end{blockarray}
 \]}
 


\noindent
Remarkably, this fan can not be obtained as the normal fan of polytope. Even more surprising, using different embeddings of an appropriate word into $c^m$, we tested more than a hundred thousand different fans realizing $\Delta_{10,3}$ and none of them turned out to be the normal fan of a polytope, see Table~\ref{tab:polytopality_A3}. We refer to Section~\ref{sec:polytopality} for more details.  
\end{example}

In Section~\ref{sec:realization_space}, we present real parameter generalizations of Corollaries~\ref{cor:fan_bipartite213} and~\ref{cor:fan_123}. These produce a large connected component of the fan realization space of spherical subword complexes of type~$A_3$ and, in particular, of multi-associahedra~$\Delta_{2k+4,k}$. The realization space is an important concept of great interest not only in the literature~\cite{richter-gebert_realization_1996}, but also in the particular case of the associahedron and its relatives~\cite{ceballos_realizing_2012}.

Using the folding technique presented in \cite[{Theorem~2.10 and Section~6.3}]{ceballos_subword_2013}, we obtain the following corollary.
This is an application of the principles to obtain the cyclohedron from the associahedron used by Hohlweg and Lange in \cite{HohlwegLange}.

\begin{corollary}\label{cor:fan_typeb_12}
Let~$c=(s_1,s_2)$ be a Coxeter element of type $B_2$ and $Q=c^m$. The fan~$\fanp{M_{12,m}}$ is a complete simplicial fan realization of~$\subwordComplex[Q]$ for the matrix $M_{12,m}$ below.
\end{corollary}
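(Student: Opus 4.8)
The plan is to deduce this statement from the type $A_3$ realization of Corollary~\ref{cor:fan_bipartite213} by folding along the diagram automorphism $\sigma$ of $A_3$ that fixes $s_2$ and exchanges $s_1\leftrightarrow s_3$, whose folded Coxeter system is exactly $B_2$. The first step is to record that the bipartite Coxeter element $c=(s_2,s_1,s_3)$ of $A_3$ is $\sigma$-invariant: since $s_1$ and $s_3$ commute, applying $\sigma$ to $c$ yields $(s_2,s_3,s_1)$, which represents the same element and the same word up to commutation. Hence $Q=c^m$ is a $\sigma$-invariant word, and we are placed precisely in the hypotheses of the folding technique of \cite[Theorem~2.10 and Section~6.3]{ceballos_subword_2013}.

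Second, I would invoke that theorem to fix the combinatorics. The automorphism $\sigma$ acts simplicially on $\subwordComplex[Q]$, and the folding isomorphism identifies the $B_2$ subword complex $\subwordComplex[(s_1,s_2)^m]$ with the subcomplex of $\sigma$-invariant faces of $\subwordComplex[Q]$, namely the faces that are unions of full $\sigma$-orbits of vertices. Under this identification a vertex of the $B_2$ complex is a $\sigma$-orbit of vertices of the $A_3$ complex (a pair of $\sigma$-swapped diagonals, or a single $\sigma$-fixed diagonal), exactly as in the Hohlweg--Lange passage from the associahedron to the cyclohedron \cite{HohlwegLange}.

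Third, I would fold the fan itself. The linear action induced by $\sigma$ on the ambient space $\R^{3m-6}$ permutes the rays of $\fanp{M_{213,m}}$ in agreement with its simplicial action on $\subwordComplex[Q]$; this compatibility is the geometric content of Section~6.3 of \cite{ceballos_subword_2013}. Restricting to the fixed subspace $U=(\R^{3m-6})^{\sigma}$ and intersecting each cone of $\fanp{M_{213,m}}$ with $U$, the cones coming from $\sigma$-invariant faces are carried to full-dimensional simplicial cones in $U$ while the non-invariant cones collapse, and the images of the orbit sums of the rays are precisely the column vectors of $M_{12,m}$. Since $\fanp{M_{213,m}}$ is a complete simplicial fan and $U$ meets it transversally, the resulting collection tiles $U$ completely and simplicially; by the isomorphism of the previous step this fan realizes the $(2m-5)$-sphere $\subwordComplex[(s_1,s_2)^m]$, so in particular $U\cong\R^{2m-4}$ and $\fanp{M_{12,m}}$ is the desired realization.

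The main obstacle is the transversality and nondegeneracy bookkeeping in this last step: one must check that intersecting with $U$ produces no lower-dimensional or coincident cones, that rays lying individually in $U$ (the $\sigma$-fixed diagonals) and rays occurring in two-element orbits both fold to the correct ray vectors, and that the explicit matrix $M_{12,m}$ indeed coincides with the list of folded rays. All of this is exactly the information packaged in \cite[Theorem~2.10 and Section~6.3]{ceballos_subword_2013}, so once its hypotheses are verified the corollary follows by specializing that machinery to the complete simplicial fan $\fanp{M_{213,m}}$ of Corollary~\ref{cor:fan_bipartite213}; the only remaining task is the direct comparison of the folded ray vectors with the columns of $M_{12,m}$.
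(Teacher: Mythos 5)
Your proposal is correct and takes essentially the same approach as the paper: the paper's proof is also a folding argument, obtaining the type $B_2$ fan as the intersection of $\fanp{M_{213,m}}$ with the subspace that is symmetric in the coordinates of the consecutive letters $s_1$ and $s_3$. The one step you defer (comparing the folded rays with the columns of $M_{12,m}$) is in fact the entire content of the paper's proof, and it is a short computation: summing rows $2,3$ and rows $5,6$ of $B_{213,i}$ (the $\sigma$-orbit rays) and using $T(i-1)+T(i)=S(i)$ makes the last two columns equal, so deleting one of them yields exactly $B_{12,i}$.
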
 

\begin{minipage}{.4\textwidth}
\footnotesize
\[
	M_{12,m}^t=\left(\begin{array}{c}
	\\[1.5ex]
	-I_{2m-4} \\
	\\[1.5ex]\hline
	\begin{array}{c@{\hspace{0.25cm}}|@{\hspace{0.5cm}}c@{\hspace{0.5cm}}|@{\hspace{0.25cm}}c}
	&& \\[1ex]
	B_{12,m} & \cdots & B_{12,1}\\
	&& \\[1ex]
	\end{array}
	\end{array}\right)_{(3m)\times (3m-6)}
\]
\end{minipage}
\quad
\begin{minipage}{.4\textwidth}
\footnotesize
\[
	\small
	B_{12,i}:=\left(\begin{array}{rr}
	S(i+1) & -T(i) \\
	4T(i) & -S(i)+1 \\
	-S(i+1)+1 & T(i) \\
	-4T(i) & S(i) \\
	\end{array}\right).
\]
\end{minipage}

\begin{proof}
	Consider the matrix $B_{213,i}$ of type $A_3$, and replace the second and third row by the row given by their sum. Apply the same procedure to the fifth and sixth row. The matrix we obtain has dimension $4\times 3$, but the last two columns are equal to each other. Finally, remove the last column to obtain $B_{12,i}$. The type $B$ fan associated to this matrix describes exactly the intersection of $\fanp{M_{213,m}}$ with a hyperplane splitting symmetrically $\fanp{M_{213,m}}$ according to the coordinates of consecutive letters $s_1$ and $s_3$. 
\end{proof}
%

%
%
\section{The sign function and mega bipartite graphs}\label{sec:sign_megabipartite}

This section contains a systematic study of bipartite properties of the graph of reduced expressions of an element in a finite Coxeter group. It is independent of the rest of the paper and uses a sign function on the vertices of the graph as the main tool in our proofs. In the case of type~$A_n$, this sign function also appears in a connection between scattering amplitudes in physics and the positive Grassmannian~\cite{postnikov_sign_physics_2012}. Many of the ideas and notation are from~\cite{reiner_diameter_2013, humphreys_reflection_1992} and the references therein. 

\subsection{Graph of reduced expressions of $w$}
Given a finite Coxeter group~$W$ and an element $w\in W$, we consider the graph $G(w)$ 
of reduced expressions of $w$ connected by braid relations. More precisely,
we denote by~$s_i$ the generators of~$W$ such that~$s_i^2=Id$. They satisfy the relations ~$ (s_is_j)^{m_{ij}}=Id$ for some positive integers~$m_{ij}=m_{ji}$.
This can be rewritten as the braid relation
\begin{equation}\label{eq:braid}
 \underbrace{s_i s_j s_i \ldots}_{m_{ij}} =  \underbrace{s_j s_i s_j \ldots}_{m_{ij}} \,.
\end{equation}
The vertices of the graph~$G(w)$ are all the reduced expressions of $w$ in terms of the generators $s_i$. Two reduced expressions are connected by an edge if and only if they are related by a single braid relation~\eqref{eq:braid}. The left part in Figure~\ref{fig:Mega_bipartite_graph} illustrates an example of the graph~ $G(\wo)$ of reduced expressions of the longest element in the symmetric group~$W=S_4$. The 16 vertices are labeled by the subscript sequence of the 16 reduced words of~$\wo$, for example 123121 represents the reduced expression $s_1s_2s_3s_1s_2s_1$. The edges are labelled with the pair of indices $\{i,j\}$ of the corresponding braid relation $m_{ij}$.

It is well known by a theorem of Tits~\cite{tits_probleme_1969} (see also~\cite[Theorem 3.3.(ii)]{bjorner_combinatorics_2005}) that for any finite Coxeter group $(W, S)$ and any $w$ in $W$, the graph $G(w)$ is connected. The main result of this section shows that $G(w)$ is a \emph{mega bipartite graph}, in the sense that any graph obtained from it by contracting the edges corresponding to a specified set of braid relations is a bipartite graph. In particular, $G(w)$ is as well bipartite. This property is illustrated in Figure~\ref{fig:Mega_bipartite_graph} for the Coxeter group of type $A_3$. The graph obtained by contracting edges of $G(\wo)$ corresponding to commutations in type $A$ has been studied by several authors in the context of \emph{higher Bruhat order} $B(n,2)$~\cite{manin_arrangement_1989,ziegler_higher_1993,shapiro_connected_1997,felsner_theorem_2000}, and in various types in connection with rhombic tilings of polygons in~\cite{elnitsky_rhombic_1997}. 

In order to make this statement more precise, we need some definitions.
We say that two pairs of integers $\{i,j\},\{i',j'\}\in [n]\times [n]$ are \defn{conjugated}  if $s_{i'}=w^{-1}s_ iw$ and $s_{j'}=w^{-1}s_ jw$ for some~$w\in W$. We say that $\{i,j\}$ and $\{i',j'\}$ are in the same \defn{automorphism class} if $\{i',j'\}$ is the image of $\{i,j\}$ via an automorphism of $W$. Any outer automorphism of $W$ is conjugated, via an inner automorphism, to the automorphism graph of the Coxeter graph of $W$. In all cases we have that~$m_{ij}=m_{i'j'}$.

Let $Z=\big\{\{i_1,j_1\}, \dots , \{i_\ell,j_\ell\}\big\}$ be a subset of $\big\{ \{i,j\}: 1\le i< j\le n\big\}$. 
We say that $Z$ is \defn{stabled} if for any $\{i,j\}\in Z$ and any $\{i',j'\}$ image of $\{i,j\}$ via an automorphism of $W$, the pair~$\{i',j'\}$ is also in $Z$.
For any stabled subset $Z$, let $G^Z(w)$ be the graph obtained from $G(w)$ by contracting all the edges corresponding to braid relations $m_{ij}$ for $\{i,j\}\notin Z$.  Two interesting cases occur when~$Z$ consists of the pairs $\{i,j\}$ for which $m_{ij}$ is even or odd (these two cases are clearly stabled). We denote by $G^\text{even}(w)$  (respectively $G^\text{odd}(w)$) the graph obtained by contracting edges corresponding to non-even braid relations $m_{ij}$ (respectively non-odd braid relations $m_{ij}$).

\begin{theorem}\label{thm:mega_bipartite}
Let $W$ be a finite Coxeter group and $w$ be an element in $W$. Then, for any stabled set~$Z$ of specified braid relations, the contracted graph~$G^Z(w)$ is a bipartite graph. In particular, the graphs $G(w)$, $G^\text{even}(w)$ and $G^\text{odd}(w)$ are bipartite.
\end{theorem}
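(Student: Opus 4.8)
The plan is to produce, for each stabled set $Z$, an explicit $\mathbb{Z}/2$-valued function on the vertices of $G(w)$ that is constant across every contracted edge and flips across every remaining edge. Since $G(w)$ is connected by Tits' theorem, such a function descends to a well-defined proper $2$-coloring of $G^Z(w)$, which is exactly what bipartiteness means. Because two such functions for disjoint stabled sets can be added modulo $2$ (and membership in $Z$ is determined by the automorphism class of an edge label), it suffices to attach one sign statistic to each conjugacy class of braid relations and combine them; the real content is therefore a single binary invariant attached to each rank-$2$ parabolic subsystem.

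First I would recall the root sequence of a reduced word $\mathbf{w}=s_{a_1}\cdots s_{a_N}$ of $w$, namely $\beta_k=s_{a_1}\cdots s_{a_{k-1}}(\alpha_{a_k})$ for $1\le k\le N$, which lists the inversion set $\mathrm{Inv}(w)$ (independent of $\mathbf{w}$) in an order that does depend on $\mathbf{w}$. The key classical input is the \emph{convexity} of this order: for every rank-$2$ parabolic subsystem $R=\Phi\cap\mathrm{span}(\beta,\gamma)$, the roots of $\mathrm{Inv}(w)\cap R$ occur in $\beta_1,\dots,\beta_N$ in one of the two dihedral orders of $R^+$ (this is the standard restriction-to-rank-$2$ characterization of reflection orders). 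Whenever $|\mathrm{Inv}(w)\cap R|\ge 2$ I can then define $\epsilon_R(\mathbf{w})\in\mathbb{Z}/2$ to record which of the two dihedral orders occurs, relative to a fixed reference orientation of $R$.

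The central computation is the effect of a single braid move on the family $(\epsilon_R)$. Writing the move as $\mathbf{w}=u\,(s_is_js_i\cdots)\,z\rightsquigarrow u\,(s_js_is_j\cdots)\,z$, the two dihedral words of the longest element of $\langle s_i,s_j\rangle$ have reversed root sequences, so the move reverses precisely the consecutive window of $m_{ij}$ roots lying in $R_0:=u\,\Phi_{\{i,j\}}\,u^{-1}$, a rank-$2$ parabolic subsystem conjugate to $\Phi_{\{i,j\}}$, and leaves all other positions fixed. For any $R\ne R_0$ the underlying $2$-planes meet in dimension at most $1$, so $R^+$ and $R_0^+$ share at most one positive root; hence at most one element of $\mathrm{Inv}(w)\cap R$ lies in the reversed window, and the induced order on $\mathrm{Inv}(w)\cap R$ is unchanged. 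Thus a braid move of type $\{i,j\}$ flips $\epsilon_{R_0}$ and \emph{no other} $\epsilon_R$. Setting
\[
f_Z(\mathbf{w})\;=\;\sum_{R\in\mathcal{R}_Z}\epsilon_R(\mathbf{w})\quad(\mathrm{mod}\ 2),
\]
where $\mathcal{R}_Z$ is the set of rank-$2$ parabolic subsystems conjugate to some $\Phi_{\{i,j\}}$ with $\{i,j\}\in Z$, yields a function that flips exactly when $R_0\in\mathcal{R}_Z$. Since a stabled set is closed under all automorphisms of $W$, in particular inner ones, it is closed under conjugation of pairs; together with the fact that conjugacy of the subsystems $\Phi_{\{i,j\}},\Phi_{\{i',j'\}}$ forces conjugacy of the pairs themselves, this shows $R_0\in\mathcal{R}_Z$ if and only if $\{i,j\}\in Z$. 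Hence $f_Z$ flips across precisely the $Z$-edges and is constant across the contracted edges, giving the required $2$-coloring of $G^Z(w)$; the choices $Z=\{\{i,j\}:1\le i<j\le n\}$, $Z=\{m_{ij}\text{ even}\}$, $Z=\{m_{ij}\text{ odd}\}$ specialize to the final three assertions.

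The main obstacle is establishing that $\epsilon_R$ is a genuinely well-defined binary invariant, i.e.\ that only the two dihedral orders can arise on $\mathrm{Inv}(w)\cap R$; this is where the convexity property of reduced-word orderings does the essential work and must be invoked carefully (for $w=\wo$ it is exactly Dyer's restriction property of reflection orders). The remaining geometric facts — that the two dihedral reduced words of a dihedral longest element have reversed root sequences, and that two distinct rank-$2$ parabolic subsystems share at most one positive root — are routine, and the bookkeeping identifying ``stabled'' with closure under subsystem-conjugacy is a short verification using the uniqueness of the simple system of a rank-$2$ subsystem up to its own Weyl group.
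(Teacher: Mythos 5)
Your proof is correct, and it reaches the theorem by a genuinely different route from the paper's. The paper argues topologically in the Coxeter complex: a reduced word is a minimal gallery from the fundamental chamber to the chamber of $w$; for each codimension-2 intersection $X$ carrying a cell $wW_{ij}$ with $\{i,j\}\in Z$ it passes to the localized line arrangement $\A_X$ and records whether the given path and a fixed reference path enclose the origin in $\R^n/X$; the product of these signs flips exactly at $Z$-braid moves because a braid move is a loop around one such $X$ and projects to a non-loop at every other codimension-2 locus. Your $\epsilon_R$ is precisely the root-theoretic shadow of this invariant under the dictionary $X\leftrightarrow R=\Phi\cap X^{\perp}$: where the paper tracks winding, you track which angular order the root sequence induces on $\operatorname{Inv}(w)\cap R$, with Papi/Dyer convexity replacing the topological step, and the window-reversal computation together with the observation that two distinct $2$-planes share at most one positive root replacing the paper's ``all other localizations see no loop.'' Beyond the difference in machinery, your version buys two real things. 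First, it needs only closure of $Z$ under conjugation of pairs rather than under all automorphisms: your simple-system argument shows that two cells $wW_{ij}$ and $w'W_{i'j'}$ on the same codimension-2 locus force the pairs to be \emph{inner}-conjugate, so you prove the strictly stronger conjugation-stabled statement that the paper explicitly could only confirm by exhaustive computer search (the distinction matters only in $F_4$ among irreducible types, where $\{1,2\}$ and $\{3,4\}$ are automorphic but not conjugate). Second, root sequences and their convexity are available for arbitrary Coxeter systems, so your argument is better positioned to attack the paper's stated open question of extending the theorem beyond finite groups, whereas the paper's proof is, by its own admission, tied to the Euclidean geometry of finite reflection groups. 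The only bookkeeping to add in a full write-up is the convention $\epsilon_R:=0$ when $|\operatorname{Inv}(w)\cap R|\le 1$, so that $f_Z$ is defined termwise; this is harmless since which $R$ admit at least two inversions depends only on $w$, and a braid move flips only an $R_0$ with $R_0^+\subseteq\operatorname{Inv}(w)$.
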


\begin{figure}[htbp]
\begin{center}
\includegraphics[width=0.95\textwidth]{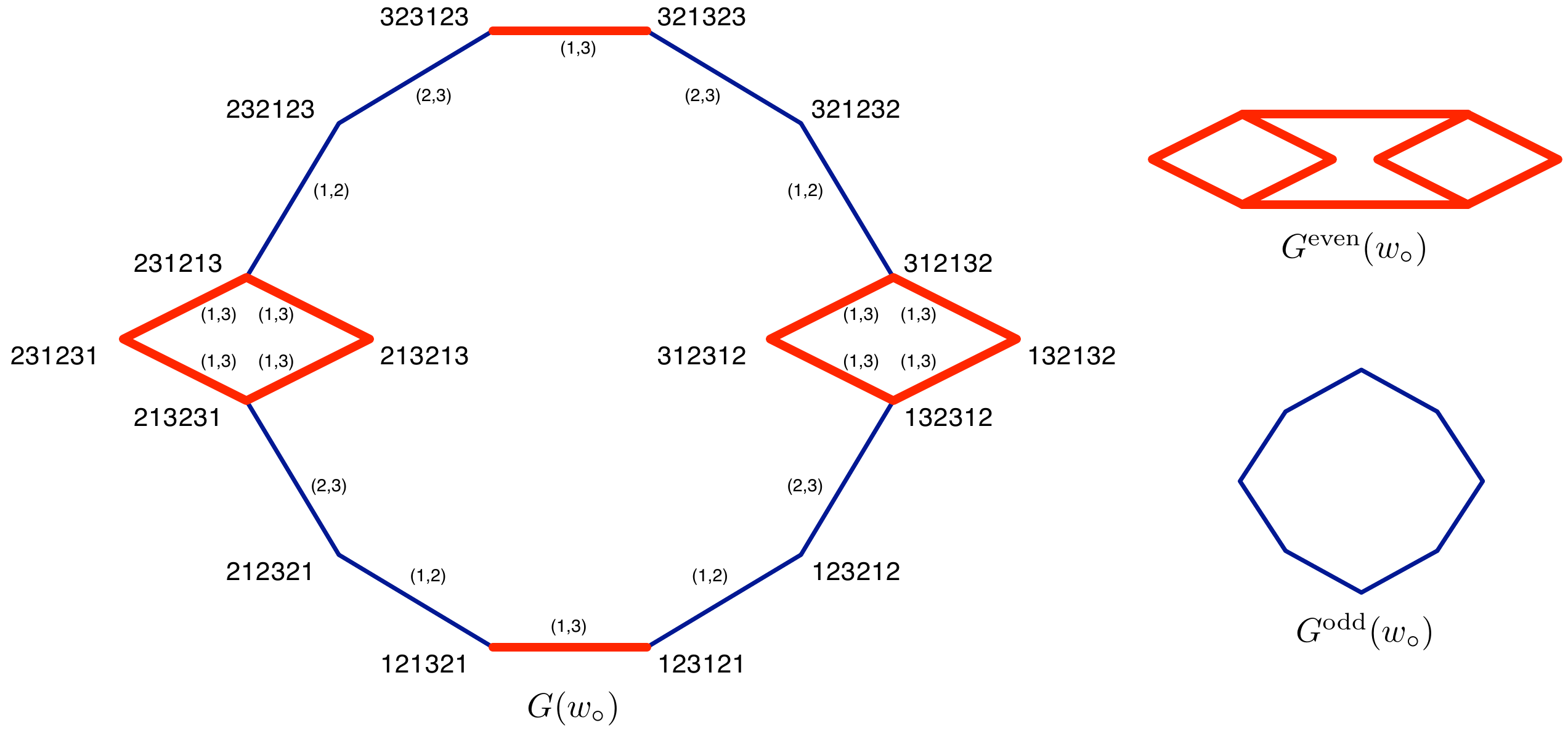}
\caption{The mega bipartite graph $G(\wo)$ of reduced expressions of the longest element in type $A_3$. The graphs obtained after contracting edges corresponding to any specified stabled set of braid moves are bipartite graphs.
Remark that for type $A$ the only non-trivial contracted graphs are $G^\text{even}(w)$ and $G^\text{odd}(w)$.}
\label{fig:Mega_bipartite_graph}
\end{center}
\end{figure}

\begin{corollary}\label{cor:evenedge}
Let $W$ be a finite Coxeter group and $w$ be an element in $W$. Then, any loop in the graph $G(w)$ contains an even number of edges labeled by pairs in the automorphism class of~$\{i,j\}$ (corresponding to a braid relation~$m_{ij}$) for any fixed pair $\{i,j\}$. In particular
\begin{enumerate}
\item Any loop in $G(w)$ contains an even number of edges.
\item Any loop in $G(w)$ contains an even number of edges corresponding to even braid relations.
\item Any loop in $G(w)$ contains an even number of edges corresponding to odd braid relations.
\end{enumerate}
\end{corollary}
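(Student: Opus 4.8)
The plan is to derive this corollary directly from Theorem~\ref{thm:mega_bipartite}, using the standard characterization of bipartiteness: a graph is bipartite if and only if every closed walk in it traverses an even number of edges. The only real content is to translate "number of edges of a given label along a loop of $G(w)$" into "length of a closed walk in a suitable contracted graph", and then invoke the theorem.

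First I would prove the general statement. Fix a pair $\{i,j\}$ and let $Z$ be its automorphism class, that is, the set of all pairs $\{i',j'\}$ arising as the image of $\{i,j\}$ under an automorphism of $W$. Since the automorphisms of $W$ form a group, applying a further automorphism to any element of $Z$ again produces an image of $\{i,j\}$, so $Z$ is closed under automorphisms and hence stabled. Theorem~\ref{thm:mega_bipartite} then applies and guarantees that $G^Z(w)$ is bipartite.

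Next I would examine the contraction map $G(w)\to G^Z(w)$. By construction $G^Z(w)$ is obtained by contracting every edge whose label lies outside $Z$, so the surviving edges are exactly those labeled by pairs in the automorphism class of $\{i,j\}$. Given a loop $\gamma$ in $G(w)$, its image is a closed walk $\bar\gamma$ in $G^Z(w)$: the contracted (non-$Z$) edges of $\gamma$ collapse to stationary steps at a single vertex, while each $Z$-labeled edge of $\gamma$ persists as a genuine edge of $G^Z(w)$. Consequently the length of $\bar\gamma$ equals the number of edges of $\gamma$ labeled by pairs in $Z$. Because $G^Z(w)$ is bipartite, its $2$-coloring forces every closed walk, and in particular $\bar\gamma$, to have even length; this is precisely the assertion that $\gamma$ contains an even number of edges in the automorphism class of $\{i,j\}$.

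Finally, the three itemized consequences follow by selecting the appropriate stabled set. Item~(1) is immediate from the bipartiteness of $G(w)$ itself (equivalently, from summing the general statement over all automorphism classes, each contributing an even count), while items~(2) and~(3) come verbatim from the same walk-length argument applied to the bipartite graphs $G^\text{even}(w)$ and $G^\text{odd}(w)$ furnished by Theorem~\ref{thm:mega_bipartite}, whose surviving edges correspond exactly to the even, respectively odd, braid relations. The step demanding care is the bookkeeping in the contraction: one must verify that collapsing non-$Z$ edges neither destroys nor manufactures $Z$-labeled edges along $\gamma$, so that the edge count is genuinely preserved. The only way a $Z$-edge could fail to survive is by becoming a self-loop in the quotient, but this would itself contradict the bipartiteness supplied by the theorem, so no such degeneration occurs. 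One should also invoke the bipartite length bound for arbitrary closed walks rather than only simple cycles, since a loop need not be simple; this is the single non-formal obstacle, and it is mild.
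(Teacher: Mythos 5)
Your proposal is correct and follows essentially the same route as the paper: the paper states this corollary as an immediate consequence of Theorem~\ref{thm:mega_bipartite}, exactly as you do, by noting that the automorphism class of $\{i,j\}$ (and likewise the even and odd classes) is a stabled set and that closed walks in a bipartite graph have even length. Your extra bookkeeping (ruling out self-loops under contraction and treating non-simple loops) is consistent with the paper's sign-function proof of the theorem, which makes both points automatic since contracted vertices share a sign while $Z$-edges join vertices of opposite signs.
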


\begin{remark}
For all finite types it is sufficient to consider only inner automorphism (conjugation) classes of pairs $\{i,j\}$. 
In type $A_n$, any pair $\{i,i+1\}$ is conjugated to any other pair~$\{j,j+1\}$ so that all odd braid relations are conjugated to each other. Similarly, any pair $\{i,j\}$ where~$j-i>1$ is conjugated to any other similar pair so all even braid relations are conjugated to each other. Hence, the only possible stabled sets $Z$ for $A_n$ are $\emptyset$, $Z^\text{odd}$, $Z^\text{even}$ and $Z^\text{odd}\cup Z^\text{even}$. 
In type~$B_n$ we have more possibilities. The pair $\{0,1\}$ with $m_{01}=4$ is an even braid relation in a single conjugacy class. For $m_{ij}=2$ we have two conjugacy classes: $\{ \{0,i\} : 1<i<n \}$ and $\{\{i,j\} : 0<i<j<n \text{ and } j-i>1\}$. Finally, all odd braid relations $m_{ij}$ are conjugated to each other. A stabled set $Z$ in type $B_n$ is any union of these classes. A study of all other irreducible cases (except $F_4$) shows that the conjugacy classes of pairs is the same as the automorphism classes.
In type $F_4$ the pair $\{1,2\}$ is not conjugated to $\{3,4\}$ but the first pair can be sent to the second via an outer automorphism.
An exhaustive computer search shows that all graphs $G^Z(w)$  are bipartite when $Z$ is stabled by conjugation instead of automorphism.
\end{remark}

In order to prove Theorem~\ref{thm:mega_bipartite}, it is convenient to represent reduced words of $w\in W$ using paths in the Coxeter arrangement associated to $W$. This will lead us to a new notion of a \emph{sign function} on the set of reduced expressions of $w$. One particular case which will be important from the subword complex perspective is the sign function for reduced expressions of the longest element.

\begin{remark}
Our proof of Theorem~\ref{thm:mega_bipartite} is purely topological and relies on the geometry of $W$ seen as a reflection group in a Euclidean space. It would be interesting to know whether this result extends to other Coxeter groups (not only finite ones).
\end{remark}

\subsection{The sign function on reduced expressions of $\wo$}
\label{sec:sign_function}

\begin{definition}\label{def:sign_function}
The \defn{sign function} on reduced expressions of~$\wo$ is a map 
\[
\begin{array}{cccc}
 \sign: & \{\text{reduced expressions of }\wo \}  & \rightarrow  & \{1,-1\}
\end{array}
\]
such that if $w,w'$ are two reduced expressions of $\wo$ connected by a braid move $m_{ij}$, then 
\begin{equation}\label{eq:sign_function1}
\sign(w') = (-1)^{m_{ij}-1} \cdot \sign(w).
\end{equation}
\end{definition}

Since the graph of reduced expressions of~$\wo$ is connected, this function is unique up to global multiplication by $-1$. Two reduced expressions connected by an odd braid move have the same sign, while two connected by an even braid move have opposite signs. A priori it is not clear whether the sign function exists and is well defined, but we will see below that it is a particular case of a more general family of sign functions on reduced expressions of an element~$w$ in $W$. In the case of type~$A_n$, this sign function already appeared in connection with scattering amplitudes in physics and the positive Grassmannian~\cite{postnikov_sign_physics_2012}. More precisely, the authors sign function arising from the case of the Grassmannian $G(2,n)$ coincides with the sign function of reduced expressions of~$\wo$ in type $A_n$.    
Figure~\ref{fig:sign_functionA3} illustrates the sign function on reduced expressions of~$\wo$ in type $A_3$.

\begin{figure}[htbp]
\begin{center}
\includegraphics[width=0.8\textwidth]{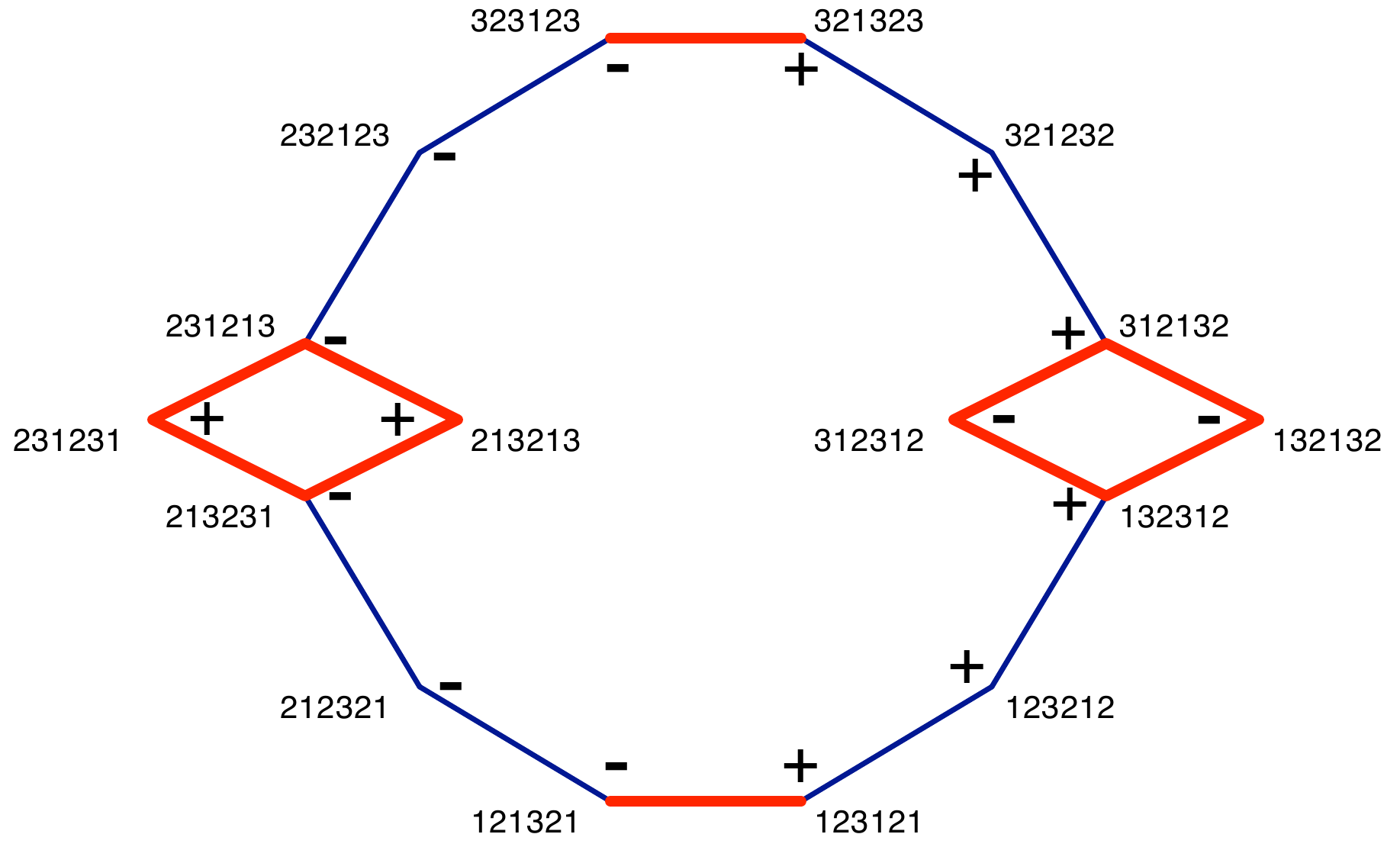}
\caption{The sign function on reduced expressions of~$\wo$ in type $A_3$. The red (thick) edges represent even braid relations (commutations in this case), and the blue (thin) edges represent odd braid relations.}
\label{fig:sign_functionA3}
\end{center}
\end{figure}

The following alternative description will be useful in Section~\ref{sec:reformulation}.
\begin{lemma}[Alternative description of the sign function]\label{lem:alternative_sign}
The sign function on reduced expressions of~$\wo$ is the unique map, up to multiplication by $-1$, such that if $w=w_1\dots w_N$ and $w'=w'_1\dots w'_N$ are two reduced expressions of $\wo$ connected by a flip, that is $w\setminus w_i=w'\setminus w'_j$, then 
\begin{equation}\label{eq:sign_function2}
\sign(w') = (-1)^{i-j} \cdot \sign(w).
\end{equation}
\end{lemma}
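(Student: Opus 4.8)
The plan is to first understand what a flip does at the level of words, then settle uniqueness almost for free, and finally reduce the sign identity to the braid-move rule of Definition~\ref{def:sign_function}. First I would unwind the hypothesis $w\setminus w_i=w'\setminus w'_j$. Writing down the common word and comparing letter by letter shows that (assuming $i<j$, as $i=j$ forces $w=w'$) the two reduced words agree outside the window $[i,j]$ and inside it read $w=\cdots\,a\,v\,\cdots$ and $w'=\cdots\,v\,b\,\cdots$, where $a=w_i$ and $b=w'_j$ are generators and $v=w_{i+1}\cdots w_j$ is a reduced word of length $j-i$. Since the prefix and suffix coincide and both words equal $\wo$, one gets $av=vb$ in $W$; equivalently $v$ sends the simple root $\beta$ of $b$ to the simple root $\alpha$ of $a$. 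In particular $j-i=\ell(v)$, so the target identity $\sign(w')=(-1)^{i-j}\sign(w)$ is exactly $\sign(w')=(-1)^{\ell(v)}\sign(w)$, which is manifestly symmetric in $w$ and $w'$.

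Next I would dispatch uniqueness. A single braid move $\underbrace{s\,t\,s\cdots}_{m}\to\underbrace{t\,s\,t\cdots}_{m}$ is itself a flip: deleting the first letter of the left factor and the last letter of the right factor leaves, in both cases, the alternating word $\underbrace{t\,s\,t\cdots}_{m-1}$, so its two distinguished positions are the endpoints of the braid segment and satisfy $i-j=-(m-1)$. Hence $(-1)^{i-j}=(-1)^{m-1}$, and on such flips the rule \eqref{eq:sign_function2} specializes to \eqref{eq:sign_function1}. Consequently the graph whose edges are flips contains the braid-move graph, which is connected by Tits' theorem, so it is connected; therefore any two maps obeying \eqref{eq:sign_function2} differ by a global constant in $\{1,-1\}$, and the function is unique up to sign.

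It remains to establish existence, i.e. that the $\sign$ of Definition~\ref{def:sign_function} really obeys \eqref{eq:sign_function2} for every flip. By Corollary~\ref{cor:evenedge} the quantity $(-1)^{(\#\text{ even braid moves along a path})}$ is independent of the path of braid moves joining two reduced expressions, and it equals $\sign(w')/\sign(w)$. So it suffices to exhibit, for a given flip $av=vb$, one path of braid moves from $w$ to $w'$ whose number of even braid moves is congruent to $\ell(v)\bmod 2$. I would do this by induction on $\ell(v)$: using $v(\beta)=\alpha$, peel a braid move off one end of the window $[i,j]$ so as to transport the letter $a$ one step into $v$ (a commutation when $a$ commutes with the first letter of $v$, a longer braid move otherwise), reducing to a flip with strictly shorter $v$. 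A peeled braid move of type $m$ shortens $\ell(v)$ by $m-1$ and contributes the sign $(-1)^{m-1}$, so the running parity stays permanently synchronized with $\ell(v)$, and the induction closes.

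The main obstacle is precisely this peeling step: for a long flip it is not clear a priori that a single braid move at the left \emph{or} the right end reduces $\ell(v)$ while keeping us among flips. The transport is governed by the sequence of positive roots $\alpha=\rho_0,\rho_1,\dots,\rho_{\ell(v)}=\beta$ with $\rho_{t-1}=s_{c_t}(\rho_t)$, where $v=s_{c_1}\cdots s_{c_{\ell(v)}}$, and the crux is to prove this sequence always admits a reduction at one end. The cleanest route, and the one I would ultimately adopt, is to read the flip off the path/Coxeter-arrangement model developed in this section: a flip there corresponds to sliding a single wall-crossing past the $\ell(v)$ crossings recorded by $v$, so the parity of the accumulated sign is visibly $\ell(v)$, bypassing the case analysis; the inductive peeling then serves as an explicit combinatorial confirmation.
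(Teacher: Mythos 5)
Your setup (window structure, $av=vb$, $j-i=\ell(v)$) and your uniqueness argument are correct, and your overall strategy for existence --- reduce the flip rule \eqref{eq:sign_function2} to the braid-move rule \eqref{eq:sign_function1} by decomposing a general flip into braid-move flips, with parity bookkeeping justified by Corollary~\ref{cor:evenedge} --- is exactly the strategy of the paper's own proof, which asserts that ``any flip between $w$ and $w'$ can be obtained by a sequence of flips associated to braid moves.'' The problem is the step you yourself flag: the peeling claim. It is not merely unproven; it is false, so the induction you propose cannot be completed. Concretely, in type $A_3$ take $w=s_3s_2s_3s_1s_2s_3=323123$ and $w'=s_3s_1s_2s_3s_1s_2=312312$, both reduced expressions of $\wo$; then $w\setminus w_6=32312=w'\setminus w'_2$, so they are connected by a flip with $i=6$, $j=2$, window $v=2312$, $a=s_3$, $b=s_1$. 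The only braid moves applicable to $w$ are the commutation at positions $3,4$ and the long braid move at positions $1,2,3$; neither segment contains position $6$, so nothing can be peeled at the $a$-end. On the $w'$ side, the only braid move touching position $2$ is the commutation at positions $1,2$, which transports $b$ \emph{away} from the window (giving $132312$), and the intermediate insertion slots inside $v$ are all invalid, since the transported reflections $(13),(14),(24)$ are not simple. Hence this flip admits no decomposition into braid-move flips whatsoever. The lemma is nevertheless true here: $\sign(323123)=\sign(312312)=-1$ by Table~\ref{table:determinantsA3}, matching $(-1)^{6-2}=+1$. So both your peeling induction and the paper's one-line decomposition claim fail on this example; the statement needs a different argument.

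Your fallback --- reading the flip off the Coxeter-arrangement model, where ``the parity of the accumulated sign is visibly $\ell(v)$'' --- is a gesture rather than a proof: the sign function is defined through braid moves (equivalently, through windings around \emph{even} codimension-$2$ cells, as in the $Z$-sign construction later in the section), not through wall crossings, and relating the two requires nontrivial accounting. In the example above, the $\ell(v)=4$ window walls $(24),(23),(14),(13)$ pair up through two \emph{odd} codimension-$2$ cells and contribute no sign at all, which is why the answer is $+1$; nothing there is ``visibly $\ell(v)$'' crossing by crossing. So the existence half of the lemma --- that the $\sign$ of Definition~\ref{def:sign_function} satisfies \eqref{eq:sign_function2} for \emph{every} flip --- remains open in your write-up, and closing it requires a genuinely different idea, for instance a winding-number analysis in the localized arrangements $\A_X$ that keeps track of how many window walls pass through each even and odd cell, or, in type $A$, the inversion multi-permutation formula stated in the remark following the lemma.
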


\begin{proof}
If $w$ and $w'$ are connected by a flip corresponding to a braid move $m_{ij}$, then equation~\eqref{eq:sign_function2} is clearly transformed into equation~\eqref{eq:sign_function1}. On the other hand, any flip between $w$ and $w'$ can be obtained by a sequence of flips associated to braid moves.  Equation~\eqref{eq:sign_function2} is obtained by applying equation~\eqref{eq:sign_function1} several times along the sequence.
\end{proof}

\begin{remark}[Sign function of type~$A_n$]
The sign function of type~$A_n$ can be easily described in terms of their inversions. Let $w=w_1\dots w_N$ be a reduced expression of~$\wo$ and $t_1, \dots , t_N$ be the corresponding inversions given by 
$t_k = w_1 \dots w_k \dots w_1.$
Each $t_k$ is a transposition of the form~$(i_k,j_k)$. If we replace each~$t_k$ by $\min\{i_k,j_k\}$, we obtain a multi-permutation of
\[
\underbrace{111\dots 1}_{n \text{ times }} \ \underbrace{22\dots 2}_{n-1 \text{ times }}  \   \dots \ n-1\ n-1\ n.
\]
The $\sign(w)$ is the sign of this multi-permutation. Recall that the sign of a multi-permutation $p=p_1\dots p_N$ is the equal to $-1$ to the number of inversions in $p$:
\[\sign(p) = (-1)^{ |\{ (i,j) :\ 1\leq i < j \leq N \text{ and } p_i > p_j \}| }. \]
For example, for the reduced expression $w=232123$ of $\wo$ of type $A_3$ the corresponding inversions and multi-permutation are given by 
\[
\begin{array}{rcccccc}
 w= & 2  & 3  & 2  & 1  & 2  & 3  \\
 \text{inversions}= & (2,3)  & (2,4)  & (3,4)  & (1,4)  & (1,3)  & (1,2)  \\
 p= &  2 & 2  & 3  & 1  & 1  & 1  \\
\end{array}
\]
This multi-permutation has 9 inversions. Therefore, $\sign(w)=\sign(p) = (-1)^9 = -1$.

According to personal communication with Alexander Postnikov, the sign of $w$ can be alternatively obtained as the product of the sign of the permutation of inversions of $w$ with $-1$ to the number of higher inversions of $w$. 
\end{remark}

\subsection{Coxeter complex; restriction and localization.}

We recall the standard construction of the Coxeter complex associated to a Coxeter group $W$. See \cite{humphreys_reflection_1992} for more details and proofs.
Let $\Phi\subset \R^n$ be a root system associated to a Coxeter group $(W,S)$, and let $\A$ be the hyperplane arrangement of all reflections induced by $\Phi$.
For each hyperplane $H\in\A$ there is a unique positive root $\alpha_H\in \Phi^+$.
We let $H^+=\{v\in\R^n : \langle v,\alpha_H\rangle>0 \}$ where $\langle -,-\rangle$ is the canonical scalar product on $\R^n$ induced by $\Phi$.
Similarly, let $H^-=-H^+$. The triples $H^-,H,H^+$ decompose $\R^n$ into two half spaces and a subspace on codimension 1.
The \emph{Coxeter complex} of $W$ is a cell decomposition of $\R^n$ obtained by considering all possible non-empty intersections 
  $\bigcap_{H\in\A} H^{\epsilon(H)}$
  where $\epsilon(H)$ is either $+, -$ or empty.
  The \emph{fundamental chamber} is the $n$-dimensional cell we obtain by choosing $\epsilon(H)=+$ for all $H\in A$. The chambers of the complex (the $n$-dimensional cells) are in natural bijection
  with the elements of $W$. More generally, as in Section~1.5~of~\cite{humphreys_reflection_1992}, every cell of the Coxeter complex is well labelled by the elements of the left cosets in the union
     $$\bigcup_{I\subseteq S} W \big/ W_I$$
 where $W_I$ is the parabolic subgroup of $W$ generated by the $s_i$ for $i\in I$. The labeling is compatible with the reflection action of $W$ on the Coxeter arrangement where the identity is identified with the fundamental chamber. The size of $I\subseteq S$ is the codimension of the cell it labels. See the examples in Figure~\ref{fig:TitsA2} and Figure~\ref{fig:TitsA2xA1}.
A reduced expression of~$w$ corresponds to a path from the fundamental chamber to the chamber $w$, crossing only through codimension~$\le 1$ cells, with a minimal number of codimension~1 cells.
The example in Figure~\ref{fig:TitsA2} corresponds to the Coxeter group $W=A_2$. The two paths from the identity to $\wo$ correspond to the two reduced expressions $\wo=s_1s_2s_1$ and $\wo=s_2s_1s_2$.
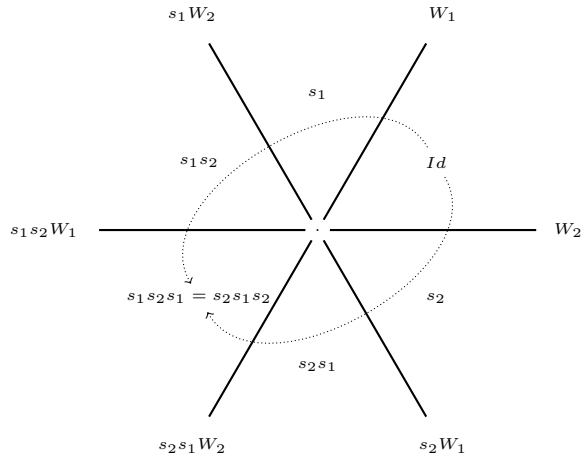
\begin{figure}[htbp]
\begin{center}
 \[
 	{\begin{tikzpicture}[scale=.3,font=\tiny]
		\node (0) at (0:0) {.};
		\node (1) at (0:10) {};
		\node (2) at (60:10){};
		\node (3) at (120:10){};
		\node (4) at (180:10) {};
		\node (5) at (240:10) {};
		\node (6) at (300:10) {};
		
		\draw[thick] (0)--(1);
		\draw[thick] (0)--(2);
		\draw[thick] (0)--(3);
		\draw[thick] (0)--(4);
		\draw[thick] (0)--(5);
		\draw[thick] (0)--(6);
		
		\node (7) at (30:6) {$\small Id$};
		\node  (8) at (90:6) {$\small s_1$};
		\node (9) at (150:6) {$\small s_1s_2$};
		\node (10) at (210:6) {$\small s_1s_2s_1=s_2s_1s_2$};
		\node (11) at (270:6) {$\small s_2s_1$};
		\node (12) at (330:6) {$\small s_2$};
		\node (13) at (0:11) {$\small W_{2}$};
		\node (14) at (60:11) {$\small W_{1}$};
		\node (15) at (120:11) {$\small s_1W_{2}$};
		\node (16) at (180:12) {$\small s_1s_2W_{1}$};
		\node (17) at (240:11) {$\small s_2s_1W_{2}$};
		\node (18) at (300:11) {$\small s_2W_{1}$};
		
		\draw [densely dotted,->] (7) to [out=120,in=120] (10);
		\draw [densely dotted,->] (7) to [out=-60,in=-60] (10) ;
	\end{tikzpicture}}
\]
\caption{Coxeter complex of type $A_2$. The chambers are labeled by elements of $W$, the walls are labeled by elements of the left cosets $W/W_1$ or $W/W_2$. The origin is labeled by the unique coset of $W/W_{12}$. A minimal path from the identity to $w\in W$ correspond to reduced expressions for $w$.}
\label{fig:TitsA2}
\end{center}
\end{figure}

Consider two distinct hyperplanes $H_1,H_2$ in $\A$ and the codimension 2 intersection $X=H_1\cap H_2$. The space $X$ is a union of cells of the original Coxeter complex of $W$.
The cells of maximal dimension in $X$ are indexed by left cosets $wW_I$ for some subsets $I=\{i,j\}$. The cell decomposition of $X$
is a Coxeter arrangement of rank $n-2$. The reflections within $X$ can always be seen as automorphisms of $W$ restricted to $X$.
Indeed any reflection of the system in $X$ is a reflection of $\R^n$ that also preserve the Coxeter system of $W$, this is an automorphism of $W$.
Example~\ref{exam:outer} allows us to visualize this fact. In particular, any two maximal cells $wW_{ij}$ and  $w'W_{i'j'}$ of $X$ are related
via an automorphism of $W$. This shows that $m_{ij}=m_{i'j'}$.  

\begin{definition} \label{def:X} 
For two distinct hyperplanes $H_1,H_2$ in $\A$ such that $X=H_1\cap H_2$ is of codimension~2, the \defn{restriction of the Coxeter complex}
to $X$ is the union of cells of the original Coxeter complex of $W$.
\end{definition}

\begin{definition} For $X$ as in Definition~\ref{def:X}, we define the \defn{localized Coxeter arrangement} $\A_X$ as the hyperplane arrangement in the (2-dimensional) quotient space $\R^n/X$
\[
\A_X=\big\{ H/X : H\in \A \text{ and } X\subseteq H \big\} \subseteq {\mathbb R}^n/X\, .
\]
The arrangement $\A_X$ is a Coxeter arrangement of lines in a two dimensional plane. 
Pick any cell of maximal dimension in $X$, it is indexed by a left coset $wW_I$ where $I=\{i,j\}$. The number of lines in $\A_X$ is equal to $m_{ij}$.
In view of the discussion before Definition~\ref{def:X}, this is independent of the choice of maximal cell in $X$ that we pick.
\end{definition}

\begin{example}\label{exam:outer}
In Figure~\ref{fig:TitsA2xA1}, there are four possible codimension~2 spaces $X$. Denote by $\bf 0$ the origin. The line $s_2s_1W_{24}, {\bf 0}, W_{14}$ is one possible $X$ given with its cell decomposition.
The reflection of $X$ that reflect $s_2s_1W_{24}$ into $W_{14}$ can be viewed in $\R^3$ as the reflection through the plane orthogonal to $X$. This is not a reflection of $W$ but it preserves the structure
of the Coxeter arrangement of~$W$, hence it is an outer automorphism of $W$.  This automorphism sends the pair $\{1,4\}$ to the pair $\{2,4\}$ which are conjugated.
If instead we take $X$ to be $s_4W_{12}, {\bf 0}, W_{12}$, then this time the reflection of $X$ is the reflection $s_4\in W$.
It is an inner automorphism. 
\end{example}

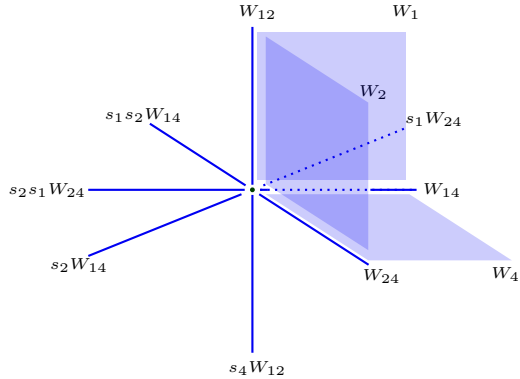
\begin{figure}[htbp]
\begin{center}
$$
\begin{tikzpicture}%
	[x={(1cm, .0cm)},
	y={(.0cm, 1 cm)},
	z={(-.4cm, -0.866cm)},
	scale=0.0675,
	font=\tiny,
	back/.style={loosely dotted, thin},
	edge/.style={color=blue!95!black, thick},
	facet/.style={fill=blue!95!black,fill opacity=0.200000},
	vertex/.style={inner sep=0pt,circle,draw=green!25!black,fill=green!75!black,thick,anchor=base}]
%
%
\node[vertex] at (0,0,0)     {};
\node at (37,0,0) {$\small W_{14}$};
\node at (30,35,0) {$\small W_{1}$};
\node at (1,35,0) {$\small W_{12}$};
\node at (1,-35,0) {$\small s_4W_{12}$};
\node at (33,0,19.5) {$\small W_{24}$};
\node at (57,0,19) {$\small W_{4}$};
\node at (30,33,16) {$\small W_{2}$};
\node at (29,0,-16) {$\small s_1W_{24}$};
\node at (-27,0,17) {$\small s_2W_{14}$};
\node at (-28,0,-17) {$\small s_1s_2W_{14}$};
\node at (-40,0,0) {$\small s_2s_1W_{24}$};

\draw[edge,dotted] (1.5,0,0) -- (32,0,0);
\draw[edge] (1.5,0,0) -- (3,0,0);
\draw[edge] (23,0,0) -- (32,0,0);
\draw[edge] (-1.5,0,0) -- (-32,0,0);
\draw[edge] (0,1,0) -- (0,32,0);
\draw[edge] (0,-1,0) -- (0,-32,0);
\draw[edge] (1.732,0,1) -- (29.4448,0,17);
\draw[edge] (-1.732,0,-1) -- (-25.9808,0,-15);
\draw[edge] (-1.732,0,1) -- (-25.9808,0,15);
\draw[edge,dotted] (1.732,0,-1) -- (24.2487,0,-14);

\fill[facet] (1,2,0) -- (30,2,0) -- (30,31,0) -- (1,31,0) -- (1,2,0) -- cycle {};
\fill[facet] (3,0,1) -- (31,0,1) -- (56.9808,0,16) -- (28.9808,0,16) -- (3,0,1) -- cycle {};
\fill[facet] (3,2,1) -- (3,31,1) -- (28.9808,31,16) -- (28.9808,2,16) -- (3,2,1) -- cycle {};

\end{tikzpicture}
$$
\caption{Coxeter complex of type $A_2\times A_1=W_{1,2,4}\subset A_4$. There are 12 chambers, 18 planar cells, 8 codimension~2 cells and the origin. We have labelled the planar cells around the fundamental chamber and all 8 codimension 2 cells.}
\label{fig:TitsA2xA1}
\end{center}
\end{figure}

\subsection{General sign functions and proof of Theorem~\ref{thm:mega_bipartite}}

Given a stabled set $Z$ of specified braid relations, we are interested in the localizations $\A_X$ when $X$  has a maximal cell $wW_{ij}$ for some $\{i,j\}\in Z$. 
Let
\[
Z(W)=\left\{ X: {X=H_1\cap H_2 \text{ for } H_1,H_2\in \A \text{  of codimension~2}
           \atop  \text{ $X$  has a maximal cell $wW_{ij}$ for some } \{i,j\}\in Z} \right\}\,.
\]
Remark that $Z(W)$ is a set with no multiplicity; if a codimension 2 space $X$ is generated in two different ways, then we count it only once in $Z(W)$. 
The stability of $Z$ guaranties that the choices of $H_1$ and $H_2$ for $X$ and the choices of maximal cell $wW_{ij}$ in $X$ do not matter. 

We want to define a sign function on the vertices of $G(w)$ depending on the set $Z$. The sign function introduced in Section~\ref{sec:sign_function} corresponds to the case where $Z$ is the set of even braid moves. 
Fix one reduced expression $r_0=s_{i_2}s_{i_2}\cdots s_{i_\ell}$ for $w$. Any reduced expression
$r$ for $w$ can be encoded with a path $P_r$ from the identity chamber to the chamber corresponding to $w$. 
For any $X\in Z(W)$, The paths $P_r$ and $P_{r_0}$ induce two paths $\overline{P_r}^X$ and  $\overline{P_{r_0}}^X$  in the quotient plane ${\mathbb R}^n/X$. 
Only two situations may happen: the two paths form a closed loop around $(0,0)$ or not. This is illustrated in the figure below for the case where the chamber of $w$ becomes the chamber of the longest element in the quotient plane.

 $${\begin{tikzpicture}[scale=.2,font=\tiny]
 \node (1) at (10,0) {};
 \node (2) at (7.2,7.2){};
 \node (3) at (0,10){};
 \node (4) at (-10,0) {};
 \node (5) at (-7.2,-7.2) {};
 \node (6) at (0,-10) {};
\draw[thick] (1)--(4);
\draw[thick] (2)--(5);
\draw[thick] (3)--(6);
\node (7) at (4.7,1.9) {$\small Id$};
\node (10) at (-4.7,-1.9) {$\small \wo$};
\node at (5,-5) {$\scriptstyle \bullet^{ \bullet{^\bullet}}$};
\node at (-5,5) {$\scriptstyle \bullet^{ \bullet{^\bullet}}$};
 \draw [densely dotted,->] (7) to [out=120,in=120] (10) ;
 \draw [densely dotted,->] (7) to [out=-60,in=-60] (10) ;
\end{tikzpicture}} 
\qquad\raise60pt\hbox{ or }\qquad
 {\begin{tikzpicture}[scale=.2,font=\tiny]
 \node (1) at (10,0) {};
 \node (2) at (7.2,7.2){};
 \node (3) at (0,10){};
 \node (4) at (-10,0) {};
 \node (5) at (-7.2,-7.2) {};
 \node (6) at (0,-10) {};
\draw[thick] (1)--(4);
\draw[thick] (2)--(5);
\draw[thick] (3)--(6);
\node at (5,-5) {$\scriptstyle \bullet^{ \bullet{^\bullet}}$};
\node at (-5,5) {$\scriptstyle \bullet^{ \bullet{^\bullet}}$};
\node (7) at (4.7,1.9) {$\small Id$};
\node (10) at (-4.7,-1.9) {$\small \wo$};
 \draw [densely dotted,->] (7) to [out=120,in=80] (10) ;
 \draw [densely dotted,->] (7) to [out=90,in=110] (10) ;
\end{tikzpicture}} $$

\noindent
In the first case we let $z(\overline{P_r}^X,\overline{P_{r_0}}^X)=1$ and in the second case we let $z(\overline{P_r}^X,\overline{P_{r_0}}^X)=0$.

\begin{definition}
The~\defn{$Z$-sign function} on reduced expressions of $w$ as the map
\[
	\sign^Z(r) = \prod_{X\in Z(W)} (-1)^{z(\overline{P_r}^X,\overline{P_{r_0}}^X)}.
\]
This function is well defined and depends only on the choice of $r_0$. 
\end{definition}

\begin{lemma} If $r$ and $r'$ differ by a single braid relation $m_{ij}$ then
   $$\sign^Z(r) = \left\{ \begin{array}{rl} -\sign^Z(r') &\text{if $(i,j)\in Z$,} \cr \cr \sign^Z(r') &\text{otherwise.} \end{array} \right. $$
\end{lemma}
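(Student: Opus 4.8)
The plan is to reinterpret the quantity $z(\overline{P_r}^X,\overline{P_{r_0}}^X)$ homotopically and then track how it changes under a single braid move. First I would observe that, since each hyperplane of $\A$ passes through the origin, every flat $X\in Z(W)$ is a linear codimension~$2$ subspace, and the winding number around the origin of a loop in $\R^n/X\cong\R^2$ equals the linking number of the corresponding loop with $X$ in $\R^n$ (equivalently, the mod~$2$ intersection number of any bounding surface with $X$). Writing $\gamma_r:=P_r\cdot P_{r_0}^{-1}$ for the loop obtained by following $P_r$ and returning along $P_{r_0}$, the definition of $z$ gives $z(\overline{P_r}^X,\overline{P_{r_0}}^X)\equiv \operatorname{lk}(\gamma_r,X)\pmod 2$, because $z=1$ precisely when the two projected paths enclose the origin, i.e.\ when $\overline{\gamma_r}^X$ winds around $(0,0)$ an odd number of times. (As each reduced path crosses every hyperplane at most once, the winding is in fact $0$ or $1$, but only the parity is needed.)

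Next I would isolate the effect of the braid move. Since $r$ and $r'$ agree outside the length-$m_{ij}$ subword being flipped, the paths $P_r$ and $P_{r'}$ coincide except between the two chambers $u$ and $u'$ lying immediately before and after that subword, and on that portion they are the two minimal paths from $u$ to $u'$ running around a single codimension~$2$ flat $X_0$ whose maximal cells are of type $\{i,j\}$. Consequently the loop $\delta:=\gamma_r\cdot\gamma_{r'}^{-1}=P_r\cdot P_{r'}^{-1}$, after cancelling the common parts, reduces to a loop winding exactly once around $X_0$ and supported in the star of $X_0$; this is exactly the rank-$2$ local picture described by the localized arrangement $\A_{X_0}$. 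From $\delta=\gamma_r\gamma_{r'}^{-1}$ and additivity of linking numbers I then obtain, for every $X\in Z(W)$,
\[
z(\overline{P_r}^X,\overline{P_{r_0}}^X)-z(\overline{P_{r'}}^X,\overline{P_{r_0}}^X)\equiv \operatorname{lk}(\delta,X)\pmod 2 .
\]

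The heart of the argument — and the step I expect to be the main obstacle — is to evaluate $\operatorname{lk}(\delta,X)$ for all $X\in Z(W)$ simultaneously. I would pick a point $p$ in the relative interior of a maximal cell of $X_0$; by the description of the cells of the Coxeter complex, the only hyperplanes of $\A$ through $p$ are the $m_{ij}$ hyperplanes containing $X_0$, and $p$ lies on no other flat $X\in Z(W)$. Hence for small $\epsilon$ the ball $B(p,\epsilon)$ meets only the hyperplanes containing $X_0$ — so the arrangement looks there like $\A_{X_0}$ times a transverse subspace — and is disjoint from every $X\in Z(W)\setminus\{X_0\}$. Because the braid move is a local rank-$2$ phenomenon at $X_0$, I can route the two sub-paths of $P_r$ and $P_{r'}$ between $u$ and $u'$ inside $B(p,\epsilon)$, so that $\delta$ is a loop contained in $B(p,\epsilon)$ winding once around $X_0$. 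Then $\delta$ bounds a disk inside $B(p,\epsilon)$ meeting $X_0$ once transversally and missing every other $X$; therefore $\operatorname{lk}(\delta,X_0)=\pm1$ while $\operatorname{lk}(\delta,X)=0$ for all $X\ne X_0$.

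Finally I would assemble the product. Combining the displayed congruence with the last computation gives
\[
\sign^Z(r)=(-1)^{\,\sum_{X\in Z(W)}\operatorname{lk}(\delta,X)}\,\sign^Z(r')=(-1)^{\,[\,X_0\in Z(W)\,]}\,\sign^Z(r').
\]
It remains to decide when $X_0\in Z(W)$. By construction $X_0$ has maximal cells of type $\{i,j\}$, and since $Z$ is stabled all types of maximal cells of $X_0$ lie in one automorphism class; thus $X_0\in Z(W)$ if and only if $\{i,j\}\in Z$. This yields $\sign^Z(r)=-\sign^Z(r')$ when $(i,j)\in Z$ and $\sign^Z(r)=\sign^Z(r')$ otherwise, as claimed.
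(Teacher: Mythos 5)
Your proof is correct and takes essentially the same route as the paper's: both arguments isolate the unique codimension-2 flat $X_0$ of type $\{i,j\}$ determined by the braid move, show that the paths $P_r$ and $P_{r'}$ wind once around that flat while remaining on the same side of every other flat in $Z(W)$, and then conclude via the stability of $Z$ that the sign flips exactly when $\{i,j\}\in Z$. Your linking-number and localization formalism (additivity of $\operatorname{lk}$, the ball $B(p,\epsilon)$ around a point of the cell $wW_{ij}$) is a more rigorous packaging of the paper's direct comparison of the projected paths $\overline{P_r}^{X'}$ and $\overline{P_{r'}}^{X'}$, but the underlying geometric argument is identical.
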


\begin{proof}
Assume the two reduced expressions $r$ and $r'$ differ by a single braid relation $m_{ij}$. So, they factor as~$r=uxv$ and $r'=uyv$ where $x$ and $y$ is exactly one braid relation. The paths $P_r$ and $P_{r'}$ will be the same in the first $u$ steps and the same in the last $v$ steps. Let $C$ be the chamber we are in after the first $u$ steps of $P_r$ or $P_{r'}$. Let $H_1, H_2\in\A$ be the unique hyperplanes containing the codimension~1 cell  that we cross in the first step of $x$ and $y$ respectively. We have distinct $H_1$ and $H_2$ since $x$ and $y$ start with distinct generators.
This defines a unique $X=H_1\cap H_2\in Z(W)$. The closure $\overline{C}$ of $C$ determines a maximal cell $wW_{ij}$  in $X$. The pair $\{i,j\}$ corresponds to the braid relation $m_{ij}$ between $r$ and $r'$. 
Since $x$ and $y$ is a full braid relation, the paths
$\overline{P_r}^X$ and  $\overline{P_{r'}}^X$ describe a loop around the origin. For all the other $X'$ of codimension~2 in the Coxeter complex, the two paths 
$\overline{P_r}^{X'}$ and 
$\overline{P_{r'}}^{X'}$ will remain on the same side. 

Now, if $X\in Z(W)$, the paths $\overline{P_r}^{X'}$ and $\overline{P_{r'}}^{X'}$ describe a loop for $X'=X\in Z(W)$ and not for all $X\ne X'\in Z(W)$. 
If $X\not\in Z(W)$, then the paths $\overline{P_r}^{X'}$ and $\overline{P_{r'}}^{X'}$ do not describe a loop for any $X'\in Z(W)$. 
This shows that $\sign^Z(r) = -\sign^Z(r') $ exactly in the case $X\in Z(W)$; exactly when $\{i,j\}\in Z$.
\end{proof}

 In the case where $Z$ consists of the pairs $\{i,j\}$ for which $m_{ij}$ is even, we get back the definition of the sign function in Definition~\ref{def:sign_function}. In particular, this shows that the sign function exists and is well defined.
We are now ready to prove Theorem~\ref{thm:mega_bipartite}.

\begin{proof}[Proof of Theorem~\ref{thm:mega_bipartite}]
Let $G(w)$ be the graph of reduced expressions of $w$ in $W$ and $G^Z(w)$ be the graph obtained by contracting the edges of $G(w)$ labeled by braid moves $\{i,j\}\notin Z$. Two vertices of $G(w)$ connected by a braid move not in $Z$ have the same sign, while two connected by a braid move in $Z$ have opposite signs. Therefore, if we contract all edges of $G(w)$ corresponding to braid moves not in $Z$ we obtain a graph where every pair of adjacent vertices are labeled with different signs. Thus, the resulting graph $G^Z(w)$ is a bipartite graph.
\end{proof}

\begin{remark} Theorem~\ref{thm:mega_bipartite} is not true if we remove the word {\sl stabled} from its statement. In particular, Corollary~\ref{cor:evenedge} does not hold if we consider only pairs $\{i,j\}$ and not automorphism classes of them.
The following example
	was pointed out to us by Darij Grinberg. As in Figure~\ref{fig:TitsA2xA1}, consider the reduced expression $s_1s_2s_1s_4$ and say you consider the pair $\{1,4\}$ but {\bf not} its conjugate~$\{2,4\}$.
	The sequence of braid moves
	 $$s_1s_2s_1s_4\  {\buildrel  {\tiny 14} \over \longrightarrow} \ s_1s_2s_4s_1\  {\buildrel  {\tiny 24} \over \longrightarrow} \ s_1s_4s_2s_1\  {\buildrel  {\tiny 14} \over \longrightarrow}\ s_4s_1s_2s_1\  {\buildrel  {\tiny 12} \over \longrightarrow} \ s_4s_2s_1s_2 $$ 
	 $$ {\buildrel  {\tiny 24} \over \longrightarrow}\  s_2s_4s_1s_2\  {\buildrel  {\tiny 14} \over \longrightarrow}\  s_2s_1s_4s_2\ {\buildrel  {\tiny 24} \over \longrightarrow} \ s_2s_1s_2 s_4\  {\buildrel  {\tiny 12} \over \longrightarrow}\ s_1s_2s_1s_4$$
	 is a loop in the graph of $G(w)$ that contains three braid relations $\{1,4\}$, not an even number.  But if we consider both $\{1,4\}$ and $\{2,4\}$, then we get six braid relations of that type, an even number.
\end{remark}
%

%
%
\section{Coxeter signature matrices}

Let $Q=(q_1,\dots, q_r)$ be a word in $S$ containing at least one reduced expression of $\wo$, and let~$N=\ell(\wo)$ be the length of the longest element in $W$.
A Coxeter signature matrix is a concept that plays a fundamental role to obtain fan realizations of subword complexes. Indeed, we will see in Theorem~\ref{thm:fan_reformulation} that finding a complete simplicial fan realization of~$\subwordComplex[Q]$ is almost equivalent to finding a Coxeter signature matrix for the pair~$(Q,\wo)$.

\begin{definition}[Coxeter signature matrix]
A matrix $M \in \R^{N\times r}$ is a \defn{signature matrix} of type~$W$ for the pair $(Q,\wo)$ if for every reduced expression $w\subset Q$ of $\wo$, 
\begin{equation}\label{cond:signature}
\sign(w) \cdot \det(w) > 0, 
\end{equation}
where $\det(w)$ is the determinant of the matrix $M$ restricted to the columns corresponding to $w$, and $\sign(w)$ is the sign function of $w$ according to Definition~\ref{def:sign_function}.
\end{definition}

\begin{proposition}\label{prop:part1}
Let $W$ be a Coxeter group of type $A_n$ with $n\leq 3$, let $c$ be a Coxeter element and $Q=c^m$. The counting matrix $\dualCountingMatrix$ is a signature matrix for the pair~$(Q,\wo)$.
\end{proposition}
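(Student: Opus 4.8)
The plan is to verify condition~\eqref{cond:signature} by propagating along flips, exploiting that the sign function is pinned down only up to a global sign. First I would note that it suffices to prove that the product $\sign(w)\cdot\det(w)$ has the \emph{same} nonzero sign for every reduced expression $w\subset Q$ of~$\wo$: since $\sign$ is unique only up to global multiplication by $-1$ (Definition~\ref{def:sign_function}), once $\sign(w)\det(w)$ is of constant sign we may choose the global sign of $\sign$ so that this common value is positive, which is exactly~\eqref{cond:signature}. The reduced expressions of $\wo$ inside $Q=c^m$ are the complements of the facets of $\subwordComplex[Q]$, and by Knutson--Miller this complex is a (vertex-decomposable) ball or sphere, so its facets are connected under flips. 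Hence, after checking that one single facet has nonzero determinant to seed the argument, it is enough to show that $\sign(w)\det(w)$ keeps the same nonzero sign across one flip, and connectivity then propagates this to all facets.

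So I fix two adjacent facets, i.e.\ reduced expressions $w,w'$ of $\wo$ in $Q$ whose chosen position sets are $J\cup\{a\}$ and $J\cup\{b\}$ with $|J|=N-1$. Reading $Q$ along $J$ produces the common word $w\setminus w_i=w'\setminus w'_j$ of Lemma~\ref{lem:alternative_sign}, where $i$ and $j$ are the ranks of $a$ in $J\cup\{a\}$ and of $b$ in $J\cup\{b\}$; that lemma yields $\sign(w')=(-1)^{\,i-j}\sign(w)$. For the determinants, the $N+1$ columns of $\dualCountingMatrix$ indexed by $J\cup\{a,b\}$ lie in $\R^{N}$ and therefore satisfy a unique linear relation $\sum_{p}\lambda_p\,v_p=0$, in which the coefficient of each column equals, up to the standard alternating sign, the maximal minor deleting that column. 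Since $\det(w)$ and $\det(w')$ are exactly the minors deleting column $b$ and column $a$, a cofactor computation — tracking how deleting $a$ or $b$ shifts the ranks of the remaining positions, which in both cases $a<b$ and $a>b$ produces one extra sign — gives $\det(w')/\det(w)=-(-1)^{\,i-j}\lambda_a/\lambda_b$, where $\lambda_a,\lambda_b$ are the coefficients of the columns indexed by $a,b$.

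Combining the two flip rules gives $\sign(w')\det(w')/(\sign(w)\det(w))=-\lambda_a/\lambda_b$, so $\sign(w)\det(w)$ is flip-invariant precisely when $\lambda_a$ and $\lambda_b$ have \emph{opposite} signs — equivalently, when $v_a\equiv (-\lambda_b/\lambda_a)\,v_b$ modulo $\operatorname{span}\{v_p:p\in J\}$ with positive constant, i.e.\ the columns $v_a$ and $v_b$ are \emph{positive} multiples of one another in the one-dimensional quotient by the span of the shared columns. This condition simultaneously forces both residues to be nonzero, hence $\det(w)\ne 0$ and $\det(w')\ne 0$, so the construction never degenerates and the propagation is well founded. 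Thus the entire statement reduces to one sign assertion: \emph{for every flip of $\subwordComplex[c^m]$ the circuit coefficients $\lambda_a$ and $\lambda_b$ have opposite signs.}

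Proving this is the crux, and it is where the hypothesis $n\le 3$ is indispensable; for $n\ge 4$ the counting matrix is known not to give the right fan, so the assertion must fail. Here I would use the explicit combinatorial meaning of the entries $d_{\alpha,j}$ as numbers of copies of $c_\alpha$ through a fixed letter: because every positive root of $A_n$ with $n\le 3$ has support of size at most $3$, each restriction $c_\alpha$ is a Coxeter element of a rank-$\le 3$ parabolic, and every row of $\dualCountingMatrix$ is governed by one of only finitely many closed forms in the functions $S(i)=i^2$ and $T(i)=i(i+1)/2$ recorded in Appendix~\ref{app:matrices}. A flip corresponds to a single braid move, so $a$ and $b$ sit in one of finitely many local configurations; using the closed forms I would compute the residues of $v_a$ and of $v_b$ modulo $\operatorname{span}\{v_p:p\in J\}$ directly, and the nonnegativity of the counting entries then forces the proportionality constant to be positive. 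Carrying out this bounded residue computation for the finitely many local shapes available in types $A_1,A_2,A_3$ finishes the proof. The main obstacle is exactly this last step: controlling the sign of the circuit \emph{uniformly in $m$}, since a single recalcitrant flip would already break the signature property.
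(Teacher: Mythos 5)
Your reduction is sound as far as it goes: flip-connectivity of the reduced expressions of $\wo$ inside $Q=c^m$, the flip rule $\sign(w')=(-1)^{i-j}\sign(w)$ from Lemma~\ref{lem:alternative_sign}, and the cofactor identity $\det(w')/\det(w)=-(-1)^{i-j}\lambda_a/\lambda_b$ are all correct, and together they show that $\sign(w)\det(w)$ has constant nonzero sign if and only if, for every flip, the two exchanged columns have circuit coefficients of opposite signs. Note, however, that this reformulation is essentially Lemma~\ref{lem:complete_fan2} of the paper (Gale duality between the flip condition and the signature condition), so up to this point you have translated the statement rather than proved it.

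The genuine gap is that the crux --- verifying the circuit-sign condition for the \emph{specific} matrices $\dualCountingMatrix$ --- is never carried out, and the justification you sketch for it does not work. You claim that ``the nonnegativity of the counting entries forces the proportionality constant to be positive''; this is false as an argument. The signs of the relevant minors are not consequences of entry nonnegativity: they depend delicately on the ordering of the copy-indices, as one sees in the paper's own verification, where e.g.\ the reduced expression $123121$ gives $\det(w)=\tfrac{1}{2}(a-d)(a-f)(b-e)(d-f)$, positive only because the positions satisfy $a>d>f$ and $b>e$, while $121321$ gives a determinant that is genuinely negative. Moreover, your ``finitely many local shapes'' framing understates the problem: a flip is not local, since the residues of $v_a$ and $v_b$ modulo $\operatorname{span}\{v_p:p\in J\}$ depend on all of $J$, and the configurations are parameterized by the $16$ reduced-expression types \emph{together with} integer parameters $m\geq a\geq b\geq\cdots\geq f$ recording the copies of $c$; the verification must be uniform in these parameters. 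That uniform verification is exactly what the paper's proof consists of: symbolic computation (via Sage) of all $16$ determinants as factored polynomials in $a,\dots,f$ (Tables~\ref{table:determinantsA3} and~\ref{table:determinantsA3_2}), compared case by case with the sign function of Figure~\ref{fig:sign_functionA3}, plus the symmetry argument for the remaining Coxeter elements. You correctly identify this step as ``the main obstacle,'' but identifying it is not the same as overcoming it; as written, the proposal is a valid reduction followed by an unexecuted computation, so it does not establish Proposition~\ref{prop:part1}.
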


\begin{proof}
This result is proven by inspection in each case. Explicit formulas for the counting matrices can be found in Appendix~\ref{app:matrices}.\\
\emph{Type $A_1$:} there is exactly one reduced expression of $\wo$ in type $A_1$, whose sign function is positive. Since the determinant of any reduced expression of $\wo$ in $Q$ is equal to 1, the result follows.\\
\emph{Type $A_2$:} there are exactly two reduced expressions of $\wo$ in type $A_2$: $s_1s_2s_1$ and $s_2s_1s_2$. The sign function for both expressions is positive. It is also straight forward to check that the determinant of any reduced expression of $\wo$ in $Q$ is also positive. Therefore the result follows.\\
\emph{Type $A_3$:}
let $w\subset Q$ be a reduced expression of $\wo$. In type $A_3$, there are 16 different reduced expressions of $\wo$. For each of them, the submatrices corresponding to $w$ always have a fixed form with 6 parameters $m\geq a\geq b\geq c\geq d\geq e\geq f$ corresponding to the copy of $c$ in which the columns are taken in the power $c^m$ (counted from right to left). The determinants only depend on these parameters. Table~\ref{table:determinantsA3} presents the formulas for the 16 determinants for~$c=(s_2,s_1,s_3)$ and Table~\ref{table:determinantsA3_2} for~$c=(s_1,s_2,s_3)$. By inspection, the determinants satisfy the necessary condition~\eqref{cond:signature} of a Coxeter signature matrix, compare with Figure~\ref{fig:sign_functionA3}. The other Coxeter elements are obtained using the symmetry of the counting matrix.

\begin{table}[!htbp]
\small
\begin{center}
\begin{tabular}{c|r|c}
Expression & Determinant & Sign \T\B\\\hline
123121 & $\frac{1}{2}(a-d)(a-f)(b-e)(d-f)$ & $+$\T\B\\
121321 & $-\frac{1}{2}(a-c)(a-f)(b-e)(c-f)$ & $-$\T\B\\\hline
231231 & $\frac{1}{2}(a-d)(b-e)(c-f)(2(a+d)-b-c-e-f+2)$ & $+$\T\B\\
231213 & $-\frac{1}{2}(a-d)(b-f)(c-e)(2(a+d)-b-c-e-f+2)$ & $-$\T\B\\
213231 & $-\frac{1}{2}(a-d)(c-e)(b-f)(2(a+d)-b-c-e-f+2)$ & $-$\T\B\\
213213 & $\frac{1}{2}(a-d)(c-f)(b-e)(2(a+d)-b-c-e-f+2)$ & $+$\T\B\\\hline
123212 & $(a-e)(b-d)(b-f)(d-f)$ & $+$ \T\B\\\hline
212321 & $-(a-c)(a-e)(b-f)(c-e)$ & $-$ \T\B\\\hline
321323 & $\frac{1}{2}(a-d)(a-f)(b-e)(d-f)$ & $+$ \T\B\\
323123 & $-\frac{1}{2}(a-c)(a-f)(b-e)(c-f)$ & $-$ \T\B\\\hline
132132 & $-\frac{1}{2}(a-d)(b-e)(c-f)(a+b+d+e-2(c+f)-2)$ & $-$ \T\B\\
132312 & $\frac{1}{2}(a-e)(b-d)(c-f)(a+b+d+e-2(c+f)-2)$ & $+$ \T\B\\
312132 & $\frac{1}{2}(a-e)(b-d)(c-f)(a+b+d+e-2(c+f)-2)$ & $+$ \T\B\\
312312 & $-\frac{1}{2}(a-d)(b-e)(c-f)(a+b+d+e-2(c+f)-2)$ & $-$ \T\B\\\hline
232123 & $-(a-c)(a-e)(b-f)(c-e)$ & $-$ \T\B\\\hline
321232 & $(a-e)(b-d)(b-f)(d-f)$ & $+$\T\B
\end{tabular}
\end{center}
\caption{Fomulas for the determinants of submatrices corresponding to reduced expressions of $\wo$ in $Q=c^m$ for $c=213$.}
\label{table:determinantsA3}
\end{table}

\begin{table}[!htbp]
\small
\begin{center}
\begin{tabular}{c|r|c}
Expression & Determinant & Sign \T\B\\\hline
123121 & $\frac{1}{2}(a - d)(a - f)(b - e)(d - f)$ & $+$\T\B\\
121321 & $-\frac{1}{2}(a - c)(a - f)(b - e)(c - f)$ & $-$\T\B\\\hline
231231 & $\frac{1}{2}(2(a + d) - b - c - e - f)(a - d)(b - e)(c - f)$ & $+$\T\B\\
231213 & $-\frac{1}{2}(2(a + d) - b - c - e - f)(a - d)(b - f)(c - e)$ & $-$\T\B\\
213231 & $-\frac{1}{2}(2(a + d) - b - c - e - f)(a - d)(b - f)(c - e)$ & $-$\T\B\\
213213 & $\frac{1}{2}(2(a + d) - b - c - e - f)(a - d)(b - e)(c - f)$ & $+$\T\B\\\hline
123212 & $(a - e)(b - d)(b - f)(d - f)$ & $+$ \T\B\\\hline
212321 & $-(a - c)(a - e)(b - f)(c - e)$ & $-$ \T\B\\\hline
321323 & $\frac{1}{2}(a - d)(a - f)(b - e)(d - f)$ & $+$ \T\B\\
323123 & $-\frac{1}{2}(a - c)(a - f)(b - e)(c - f)$ & $-$ \T\B\\\hline

132132 & $-\frac{1}{2}(a + b + d + e - 2(c + f))(a - d)(b - e)(c - f)$ & $-$ \T\B\\
132312 & $\frac{1}{2}(a + b + d + e - 2(c + f))(a - e)(b - d)(c - f)$ & $+$ \T\B\\
312132 & $\frac{1}{2}(a + b + d + e - 2(c + f))(a - e)(b - d)(c - f)$ & $+$ \T\B\\
312312 & $-\frac{1}{2}(a + b + d + e - 2(c + f))(a - d)(b - e)(c - f)$ & $-$ \T\B\\\hline

232123 & $-(a - c)(a - e)(b - f)(c - e)$ & $-$ \T\B\\\hline
321232 & $(a - e)(b - d)(b - f)(d - f)$ & $+$\T\B
\end{tabular}
\end{center}
\caption{\label{tab:red_expr} Fomulas for the determinants of submatrices corresponding to reduced expressions of $\wo$ in $Q=c^m$ for $c=123$.}
\label{table:determinantsA3_2}
\end{table}

In order to illustrate how these determinants are computed, we present a specific example for~$w=123121$ and $c=(s_2,s_1,s_3)$. In this case, the matrix $\dualCountingMatrix$ restricted to the columns corresponding to $w$ has the form
\[
\tiny
\left(\begin{array}{rrrrrr}
-\frac{1}{2} \, {\left(a + 1\right)} a & {\left(b + 1\right)}^{2} & -\frac{1}{2} \, {\left(c + 1\right)} c & -\frac{1}{2} \, {\left(d + 1\right)} d & {\left(e + 1\right)}^{2} & -\frac{1}{2} \, {\left(f + 1\right)} f \\
-\frac{1}{2} \, {\left(a - 1\right)} a + 1 & {\left(b + 1\right)} b & -\frac{1}{2} \, {\left(c + 1\right)} c & -\frac{1}{2} \, {\left(d - 1\right)} d + 1 & {\left(e + 1\right)} e & -\frac{1}{2} \, {\left(f - 1\right)} f + 1 \\
-\frac{1}{2} \, {\left(a + 1\right)} a & {\left(b + 1\right)} b & -\frac{1}{2} \, {\left(c - 1\right)} c + 1 & -\frac{1}{2} \, {\left(d + 1\right)} d & {\left(e + 1\right)} e & -\frac{1}{2} \, {\left(f + 1\right)} f \\
\frac{1}{2} \, {\left(a + 1\right)} a & -{\left(b + 1\right)}^{2} + 1 & \frac{1}{2} \, {\left(c + 1\right)} c & \frac{1}{2} \, {\left(d + 1\right)} d & -{\left(e + 1\right)}^{2} + 1 & \frac{1}{2} \, {\left(f + 1\right)} f \\
\frac{1}{2} \, {\left(a - 1\right)} a & -{\left(b + 1\right)} b & \frac{1}{2} \, {\left(c + 1\right)} c & \frac{1}{2} \, {\left(d - 1\right)} d & -{\left(e + 1\right)} e & \frac{1}{2} \, {\left(f - 1\right)} f \\
\frac{1}{2} \, {\left(a + 1\right)} a & -{\left(b + 1\right)} b & \frac{1}{2} \, {\left(c - 1\right)} c & \frac{1}{2} \, {\left(d + 1\right)} d & -{\left(e + 1\right)} e & \frac{1}{2} \, {\left(f + 1\right)} f
\end{array}\right)
\]
where $m\geq a > b\geq c > d > e\geq f$ correspond to the copy of $c$ (counted from right to left) in which the letters of $w$ appear in $Q=c^m$. The determinant of this matrix was computed using the computer software Sage~\cite{sage}, and is shown in Table~\ref{table:determinantsA3}. It is remarkable that the determinant in all cases has such a simple factorization. 
\end{proof}

We remark that Proposition~\ref{prop:part1} does not hold for $n\geq 4$, and address the problem of finding general Coxeter signature matrices as a main direction of future research.

%

%
%
\section{Fan realizations}
\label{sec:reformulation}

The main goal of this section is to present a reformulation of the problem of finding fan realizations of subword complexes in terms of Coxeter signature matrices. This will be used to prove our main result about fan realizations of spherical subword complexes of type~$A_3$ in Section~\ref{sec:mainproof}.

As before, we consider a word $Q=(q_1,\dots, q_r)$ in $S$ containing at least one reduced expression of $\wo$, and denote by $N=\ell(\wo)$ the length of the longest element in $W$. We also consider a full rank matrix~$M\in \R^{(r-N)\times r}$ and a Gale dual matrix~$M^G\in \R^{N\times r}$, as well as the associated fan~$\fan$ from Definition~\ref{def:fan}.

\begin{theorem}[{Ceballos \cite[Section~3.1 and Theorem~3.7]{ceballos_associahedra_2012}}]
\label{thm:fan_reformulation}
$\fan$ is a complete simplicial fan realization of the spherical subword complex~$\subwordComplex[Q]$ if and only if
\begin{enumerate}[1.]
\item[(S)] $M^G$ is a Coxeter signature matrix for the pair~$(Q,\wo)$ (Signature), and
\item[(I)] there is a facet of~$\subwordComplex[Q]$ for which the interior of its associated cone is not intersected by any other cone (Injectivity).
\end{enumerate}
\end{theorem}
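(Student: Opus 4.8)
The plan is to prove both implications by reducing the assertion ``$\fan$ is a complete simplicial fan realization of $\subwordComplex[Q]$'' to three geometric conditions on the rays $v_1,\dots,v_r$ (the columns of $M$): \textbf{(a)} \emph{simpliciality}, that for each facet the corresponding columns of $M$ are linearly independent; \textbf{(b)} a \emph{local wall condition}, that whenever two facets $I=R\cup\{a\}$ and $I'=R\cup\{a'\}$ share a ridge $R$, the rays $v_a,v_{a'}$ lie on strictly opposite sides of the hyperplane spanned by $(v_k)_{k\in R}$; and \textbf{(c)} a \emph{degree-one covering}, that some maximal cone covers a generic point of its interior exactly once. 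I would invoke the standard criterion (see \cite{de_loera_triangulations_2010}) that, for rays indexed by the vertices of a simplicial sphere $\subwordComplex[Q]$ (a sphere by \cite{KnutsonMiller-subwordComplex}), conditions (a), (b) and (c) together are equivalent to the collection of cones being a complete simplicial fan. The heart of the proof is then to match (a)+(b) with the signature condition (S) via Gale duality and the sign function, and to identify (c) with the injectivity condition (I).

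First I would set up the \emph{Gale-dual complementary-minor dictionary}. For a facet $I$ with complement $w=[r]\smallsetminus I$, the defining property of a Gale dual yields an identity of the form $\det(M_I)=(-1)^{\sigma(I)}\kappa\,\det(w)$, where $M_I$ is the submatrix of $M$ on the columns $I$, the quantity $\det(w)$ is the maximal minor of $M^G$ on the columns $w$ as in the definition of signature matrix, the scalar $\kappa\neq0$ is independent of $I$, and $(-1)^{\sigma(I)}$ is the explicit alternating sign coming from the positions of $I$. In particular the strict inequality in (S) forces $\det(w)\neq0$ for every reduced expression, hence $\det(M_I)\neq0$ for every facet, which is precisely simpliciality (a). For a shared ridge, Cramer's rule applied to the unique linear dependence among $(v_k)_{k\in R\cup\{a,a'\}}$ shows that the wall condition (b) at that ridge is equivalent to $\operatorname{sign}\det(M_I)=(-1)^{p_a+p_{a'}}\operatorname{sign}\det(M_{I'})$, where $p_a,p_{a'}$ record the positions of $a,a'$ inside $R\cup\{a,a'\}$.

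The crux is that these per-ridge parity relations are exactly absorbed by the sign function. The two facets $I,I'$ correspond to reduced expressions $w,w'$ differing by a single flip $w\smallsetminus w_i=w'\smallsetminus w'_j$, for which Lemma~\ref{lem:alternative_sign} prescribes $\sign(w')=(-1)^{i-j}\sign(w)$. Feeding the dictionary into the wall relation, condition (b) at the ridge becomes $\operatorname{sign}\det(w)=(-1)^{p_a+p_{a'}+\sigma(I)+\sigma(I')}\operatorname{sign}\det(w')$, and I would verify the parity identity $p_a+p_{a'}+\sigma(I)+\sigma(I')\equiv i-j \pmod 2$ by a careful but routine bookkeeping of positions. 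Granting this identity, (b) at the ridge holds if and only if $\sign(w)\det(w)$ and $\sign(w')\det(w')$ share the same sign. Hence the wall condition holds at \emph{every} ridge if and only if $\sign(w)\det(w)$ is constant along every edge of the flip graph on facets; since $\subwordComplex[Q]$ is a sphere this dual graph is connected, so constancy along flips is equivalent to $\sign(w)\det(w)$ having one fixed sign for all $w$, and after the global normalization of $\sign$ (defined only up to a global $\pm1$) this is exactly condition (S). This gives (a)+(b) $\Leftrightarrow$ (S).

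It remains to tie the global step to (I) and assemble the two directions. Assuming (S), so that (a) and (b) hold, I would define the covering multiplicity $\mu(x)$ of a generic point $x$ as the number of maximal cones whose interior contains $x$; the wall condition (b), together with the pseudomanifold structure of the sphere (every ridge lies in exactly two facets), shows $\mu$ is unchanged across every wall and hence globally constant, forcing the union of the cones to be all of $\R^{r-N}$. Condition (I) furnishes a maximal cone whose interior meets no other cone, pinning the constant to $1$; this upgrades the locally consistent gluing to a genuine overlap-free complete simplicial fan, which is condition (c), and we conclude. Conversely, if $\fan$ is already a complete simplicial fan then (a) and (b) hold, hence (S) by the equivalence above, while (I) holds trivially since in a fan the interior of any maximal cone meets no other cone. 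I expect the main obstacle to be twofold: the precise sign/parity computation aligning $\sigma(I),\sigma(I'),p_a,p_{a'}$ with the exponent $(-1)^{i-j}$ of the sign function, and the global degree argument that converts local wall consistency plus one cleanly-covered facet into an embedded complete fan, in particular ruling out that the cones wind around and overlap.
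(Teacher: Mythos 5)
Your proposal is correct and follows essentially the same route as the paper: your conditions (a)/(b)/(c) are exactly the Basis/Flip/Injectivity conditions of Lemma~\ref{lem:complete_fan1} (which the paper cites from \cite[Cor.~4.5.20]{de_loera_triangulations_2010} rather than reproving via a covering-degree argument), and your Gale-dual complementary-minor dictionary combined with the parity matching through Lemma~\ref{lem:alternative_sign} and connectivity of the flip graph is precisely the content and proof of Lemma~\ref{lem:complete_fan2}. The only difference is presentational: the paper phrases the Gale-duality step geometrically (complements of facets give bases of $\R^N$, and the opposite-side flip condition dualizes to a same-side condition), whereas you carry out the same translation through explicit minors and position signs, leaving the routine parity bookkeeping to be checked.
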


The proof of this theorem follows directly from Lemmas~\ref{lem:complete_fan1} and~\ref{lem:complete_fan2} below. Lemma~\ref{lem:complete_fan1} is a common characterization of complete simplicial fans in the literature, see for example~\cite[Cor. 4.5.20]{de_loera_triangulations_2010}. Lemma~\ref{lem:complete_fan2} is restated from~\cite[Theorem~3.7]{ceballos_associahedra_2012} but is explicitly proven here for convenience of the reader.

\begin{lemma}\label{lem:complete_fan1}
$\fan$ is a complete simplicial fan if and only if the following conditions are satisfied:
\begin{enumerate}[1.]
\item[(B)] The vectors associated to a facet of~$\subwordComplex[Q]$ form a basis of~$\R^{r-N}$ (Basis).
\item[(F)] If $I$ and $J$ are two adjacent facets that differ by a flip, that is $I\setminus \{i\} = J \setminus \{j\}$, then the vectors associated to $i$ and $j$ lie in opposite sides of the hyperplane generated by the vectors associated to the intersection $I \cap J$ (Flip).
\item[(I)] There is a facet for which the interior of its associated cone is not intersected by any other cone (Injectivity).
\end{enumerate}

\end{lemma}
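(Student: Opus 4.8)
The plan is to prove the two implications separately, putting essentially all of the work into the \textbf{if} direction via a degree-counting (covering-space) argument. First I would record the combinatorial input that drives everything: since $\subwordComplex[Q]$ is a \emph{spherical} subword complex, it is a simplicial sphere of dimension $r-N-1$, hence a pseudomanifold without boundary, so every ridge (codimension-one face, an $(r-N-1)$-element set) lies in exactly two facets, and these two facets differ by a flip. Writing $v_i$ for the column of $M$ attached to position $i$ and $C_F=\operatorname{cone}(v_i : i\in F)$, the \textbf{only if} direction is then routine: if $\fan$ is already a complete simplicial fan, each maximal cone is full-dimensional and simplicial, which is (B); two facets sharing a ridge $R$ are distinct maximal cones that fill a neighbourhood of the relative interior of the wall $\operatorname{cone}(R)$, so their apex rays $v_i,v_j$ must lie on opposite sides of $\operatorname{span}(R)$, which is (F); and distinct maximal cones of a fan have disjoint interiors, so (I) holds for \emph{every} facet.

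For the \textbf{if} direction I would introduce a degree function. Call $x\in\R^{r-N}$ \emph{generic} if it avoids the finitely many hyperplanes $\operatorname{span}(R)$ over ridges $R$, and set $\deg(x)$ to be the number of facets $F$ with $x\in\operatorname{int}(C_F)$; by (B) this is well defined, each facet contributing $0$ or $1$. The key claim is that $\deg$ is globally constant. A generic point can enter or leave $\operatorname{int}(C_F)$ only by crossing the relative interior of $\operatorname{cone}(R)$ for a ridge $R$ of $F$. Because $R$ lies in exactly the two facets $I=R\cup\{i\}$ and $J=R\cup\{j\}$, and (F) places $v_i,v_j$ on opposite sides of $\operatorname{span}(R)$, crossing $\operatorname{relint}(\operatorname{cone}(R))$ makes $x$ leave exactly one of $C_I,C_J$ and enter the other, so $\deg$ is unchanged. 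Joining any two generic points by a generic path meeting the codimension-one skeleton $\bigcup_R\operatorname{cone}(R)$ only transversally at such relative-interior points yields $\deg\equiv d_0$ for a constant $d_0$.

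Next I would use (I) to pin the value. Taking the distinguished facet $F_0$ whose open cone meets no other cone, a generic $x\in\operatorname{int}(C_{F_0})$ lies in no other $C_F$, so $\deg(x)=1$ and hence $d_0=1$. Two consequences follow. First, every generic $x$ lies in some $\operatorname{int}(C_F)$; since generic points are dense and the cones are closed, $\bigcup_F C_F=\R^{r-N}$, which is completeness. Second, distinct maximal cones have disjoint interiors. The clean way to upgrade this to the genuine face-to-face property is to view the radial map $g\colon |\subwordComplex[Q]|\cong S^{r-N-1}\to S^{r-N-1}$ that is affine on each simplex via $i\mapsto v_i$ and then projected to the unit sphere: (B) makes $g$ injective on each facet, (F) makes the images of two facets across a ridge lie on opposite sides (no overlap), so $g$ is a local homeomorphism; as a local homeomorphism from a compact connected manifold onto a connected manifold it is a covering of degree $d_0=1$, hence a homeomorphism. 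A homeomorphism that is conical on each simplex is exactly the statement that the $C_F$ meet face-to-face, which together with (B) and completeness says precisely that $\fan$ is a complete simplicial fan.

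The delicate points, where I would spend the most care, are the genericity bookkeeping in the second paragraph (ensuring the connecting path avoids the codimension-two skeleton so that each wall-crossing passes through the relative interior of a single $\operatorname{cone}(R)$) and the passage from ``disjoint interiors plus completeness'' to the true \emph{face-to-face} fan property: disjoint full-dimensional simplicial cones covering $\R^{r-N}$ need not \emph{a priori} form a fan, so the covering-space argument is the safe route, its local-homeomorphism hypothesis being supplied exactly by (B) and (F). Along the way I would also verify that no $\operatorname{cone}(R)$ contains the origin, so that the radial projection defining $g$ is everywhere well defined; this is automatic, since by (B) each ridge spans a pointed simplicial cone.
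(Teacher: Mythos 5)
First, a point of comparison: the paper does not actually prove this lemma at all --- it invokes it as a standard characterization of complete simplicial fans, citing Corollary~4.5.20 of the book of De~Loera, Rambau and Santos. Your proposal therefore has to stand entirely on its own. Its first three paragraphs --- the ``only if'' direction, and the wall-crossing argument showing that the degree function is constant, hence $\equiv 1$ by (I), whence completeness and disjointness of interiors of maximal cones --- are essentially the standard argument and are correct. (One small caveat: since the cones are not yet known to form a fan, two wall cones $\operatorname{cone}(R)$ and $\operatorname{cone}(R')$ with the same linear span may overlap in full dimension, so you cannot insist that a generic path cross ``the relative interior of a single $\operatorname{cone}(R)$'' at a time; this is harmless, because every ridge whose wall contains the crossing point contributes a cancelling pair of facets by (F), so the degree is still unchanged.)

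The genuine gap is in your last step, where you claim that (B) and (F) alone make the radial map $g\colon |\subwordComplex[Q]|\to S^{r-N-1}$ a local homeomorphism. These two conditions control $g$ only on the top two strata: (B) gives injectivity on each closed facet, and (F) rules out folding across the relative interior of a ridge; they say nothing at points of faces of codimension $\geq 2$, and the claimed implication is false there. Concrete counterexample: take the bipyramid over a hexagon $u_0,\dots,u_5$ (a simplicial $2$-sphere), send the two apexes to $\pm e_3$ and $u_j\mapsto(\cos(2\pi j/3),\sin(2\pi j/3),0)$, so that the equatorial cycle winds twice around the origin. Every facet satisfies (B) and every ridge satisfies (F), yet near each apex the map $g$ is a two-to-one branched cover (like $z\mapsto z^2$), not a local homeomorphism. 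Of course (I) fails for this configuration --- but that is precisely the problem: your argument derives ``$g$ is a covering'' from (B) and (F) alone, and only afterwards uses (I) to compute the degree of that covering; the first step is invalid. And since, as you yourself note, disjoint interiors plus completeness do not imply the face-to-face property, this step is not cosmetic --- without local injectivity along the codimension-$\geq 2$ skeleton the lemma is unproved. The standard repair requires a further idea, essentially induction on dimension through links: for a face $\tau$, the vectors induced in $\R^{r-N}/\operatorname{span}(v_i : i\in\tau)$ satisfy (B) and (F) for the link of $\tau$ (a lower-dimensional sphere), so by induction the link has a constant degree $d_\tau\geq 1$; a generic point near the relative interior of $\operatorname{cone}(v_i : i\in\tau)$ then has global degree at least $d_\tau$, and $\deg\equiv 1$ forces $d_\tau=1$, which is what yields local injectivity at $\tau$ and lets the covering (or a direct face-to-face) argument go through.
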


\begin{lemma}\label{lem:complete_fan2}
Conditions $(B)$ and $(F)$ of Lemma~\ref{lem:complete_fan1} are satisfied if and only if $M^G$ is a Coxeter signature matrix for the pair~$(Q,\wo)$.
\end{lemma}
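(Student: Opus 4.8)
The plan is to translate both conditions (B) and (F), which concern the column vectors of $M$ in $\R^{r-N}$, into statements about the maximal minors of the Gale dual $M^G$, and then to recognize the resulting condition as the signature condition by means of Lemma~\ref{lem:alternative_sign}. First I would fix the dictionary between facets and reduced words: a facet $I$ of $\subwordComplex[Q]$ satisfies $|I|=r-N$, and its complement $w:=[r]\setminus I$ is a reduced expression of $\wo$ in $Q$. Writing $a_1,\dots,a_r$ for the columns of $M$, let $M_I$ be the submatrix on the columns indexed by $I$ (in increasing order) and recall that $\det(w)$ is the determinant of $M^G$ restricted to the columns of $w$. The standard Gale-duality identity for complementary maximal minors \cite{de_loera_triangulations_2010} provides a nonzero global constant $c$ with
\[
\det(M_I) = c\,(-1)^{\sum_{k\in I}k}\,\det(w)
\]
for every facet $I$. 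This already identifies condition (B) — that $\{a_k:k\in I\}$ is a basis for each facet — with the requirement $\det(w)\neq 0$ for every reduced expression $w\subset Q$ of $\wo$.

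Next I would handle condition (F). For two facets $I=K\cup\{i\}$ and $J=K\cup\{j\}$ sharing the ridge $K=I\cap J$, the vectors $a_i,a_j$ lie on opposite sides of $\operatorname{span}\{a_k:k\in K\}$ exactly when $\psi(a_i)\psi(a_j)<0$, where $\psi(x)=\det[a_{k_1}|\cdots|a_{k_{r-N-1}}|x]$. Reordering the last column into its sorted slot gives $\det(M_I)=(-1)^{g_i}\psi(a_i)$ and $\det(M_J)=(-1)^{g_j}\psi(a_j)$ with $g_i=|\{k\in K:k>i\}|$, so (F) reads $(-1)^{g_i+g_j}\det(M_I)\det(M_J)<0$. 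Substituting the Gale identity (the factor $c^2>0$ and the even term $2\sum_{k\in K}k$ drop out modulo $2$) turns this into
\[
(-1)^{\,g_i+g_j+i+j}\,\det(w)\,\det(w') < 0,
\]
where $w'=[r]\setminus J$.

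The crux is to match this inequality with the sign function. Here $w$ and $w'$ are reduced expressions of $\wo$ connected by a flip, so Lemma~\ref{lem:alternative_sign} gives $\sign(w')=(-1)^{\alpha-\beta}\sign(w)$, where $\alpha$ (resp.\ $\beta$) is the position of the removed letter inside $w$ (resp.\ $w'$). A direct parity count — using $\alpha=j-|\{k\in K:k\le j\}|-[i\le j]$ and $\beta=i-|\{k\in K:k\le i\}|-[j\le i]$ (Iverson brackets, $1$ if true and $0$ otherwise), together with $[i\le j]+[j\le i]=1$ — yields the congruence $g_i+g_j+i+j\equiv \alpha-\beta+1 \pmod 2$. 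Consequently the displayed inequality is equivalent to $(-1)^{\alpha-\beta}\det(w)\det(w')>0$, i.e.\ to the statement that $\sign(w)\det(w)$ and $\sign(w')\det(w')$ have the \emph{same} sign. This parity bookkeeping is the only delicate point of the argument, and I expect it to be the main obstacle; everything surrounding it is formal.

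Finally I would assemble the equivalence. The dual graph of the spherical subword complex $\subwordComplex[Q]$ is connected and its edges are precisely the flips; hence, via the previous paragraph, condition (F) says that $\sign(w)\det(w)$ keeps a constant sign as $w$ ranges over all reduced expressions of $\wo$ in $Q$, while (B) says it never vanishes. Since the sign function is defined only up to a global factor $-1$ (Definition~\ref{def:sign_function}), we may fix this global sign so that the common value is positive; then (B) and (F) together become exactly $\sign(w)\det(w)>0$ for every $w$, that is, $M^G$ is a Coxeter signature matrix. Conversely, if $M^G$ is a signature matrix, then $\det(w)\neq0$ for all $w$ gives (B), and the equal (positive) signs on adjacent facets give (F) by the same equivalence. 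This proves the asserted biconditional.
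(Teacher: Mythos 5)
Your proof is correct and takes essentially the same route as the paper's: Gale duality transfers conditions $(B)$ and $(F)$ to sign conditions on the maximal minors of $M^G$, and Lemma~\ref{lem:alternative_sign} identifies the resulting flip condition with the signature condition, with connectivity of the flip graph and the global $\pm 1$ ambiguity of the sign function handling the normalization. The only difference is one of explicitness: you carry out in full (via the complementary-minor identity and the parity congruence $g_i+g_j+i+j\equiv \alpha-\beta+1 \pmod 2$, which is correct) the sign bookkeeping that the paper compresses into the phrase ``by Gale duality'' and its two dualized conditions.
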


\begin{proof}
By Gale duality, conditions $(B)$ and $(F)$ in Lemma~\ref{lem:complete_fan1} are satisfied for a matrix $M$ if and only if~$M^G$ satisfies the following two conditions:
\begin{enumerate}[1.]
\item The vectors associated to the complement of a facet of~$\subwordComplex[Q]$ form a basis of~$\R^N$.
\item If $I$ and $J$ are two adjacent facets that differ by a flip, that is $I\setminus \{i\} = J \setminus \{j\}$. Then the vectors associated to $i$ and $j$ lie in the same side of the hyperplane generated by the vectors associated to the complement of $I \cup J$.
\end{enumerate}

Condition 1 implies that for every reduced expression $w\subset Q$ of $\wo$ the determinant $\det(w)$ is different from zero. 
Moreover, using the alternative description of the sign function in Lemma~\ref{lem:alternative_sign},
if we set the sign and the determinant of $w_1\dots w_N \subset Q$ to be positive, then condition 2 implies that the sign of the determinant of $w$ is determined by
\[
\sign(w)\cdot \det(w)>0.
\]
Conversely, these inequalities imply both condition 1 and condition 2.  
\end{proof}

\begin{remark}
Although the injectivity condition $(I)$ in Theorem~\ref{thm:fan_reformulation} is a difficult property to prove in general, we suggest that the Signature condition $(S)$ is the most important part of the theorem. This is supported by our results in Section~\ref{sec:realization_space}, which show a continuous space of matrices satisfying the signature condition $(S)$ that automatically satisfy the injectivity condition $(I)$.
\end{remark}
%

%
%
\section{Proof of Theorem~\ref{thm:complete_fan}}
\label{sec:mainproof}

Let~$\subwordComplex[Q]$ be a spherical subword complex of type~$A_n$ with $n\leq 3$, and $\varphi$ be an embedding of~$Q$ into~$c^m$. This section contains the proof of Theorem~\ref{thm:complete_fan}, which asserts that the fan $\fanp{\primalMatrix}$ is complete. 
The fact that it realizes the subword complex~$\subwordComplex[Q]$ follows from the definition.
Recall that the matrix~$\primalMatrix$ is a Gale dual matrix of the matrix~$\dualMatrix$ in Definition~\ref{def:dual_matrix}. We prove this result in two steps. First, we prove that it is sufficient to consider the case where $Q=c^m$ and the embedding~$\varphi$ is the trivial embedding of $c^m$ into itself~(Section~\ref{sec:cm_enough}). The second step contains the proof of that explicit case (Section~\ref{sec:cm_part}).   

\subsection{Sufficient to prove the case $c^m$} \label{sec:cm_enough}

Our proof is based on the following lemma, which is left to the reader.

\begin{lemma}\label{lem:projection}
Let $I$ be a cone (of any dimension) in a complete simplicial fan $\mathcal F$. The projection of the link $\link(I,\mathcal F)$, to the orthogonal space of $I$, is a complete simplicial fan realizing~$\link(I,\mathcal F)$.
\end{lemma}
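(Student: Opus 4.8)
The plan is to verify directly that the projected collection of cones satisfies the defining properties of a complete simplicial fan, namely that it partitions the orthogonal complement $I^\perp$ into cones meeting along common faces and covering the whole space. First I would recall that for a cone $I$ in the fan $\mathcal F$, the link $\link(I,\mathcal F)$ consists of those cones $\sigma \in \mathcal F$ that contain $I$ as a face, recorded modulo $I$. For each such $\sigma$, let $\pi\colon \R^{r-N} \to I^\perp$ denote the orthogonal projection onto the orthogonal complement of the linear span $\langle I\rangle$, and consider the projected cone $\pi(\sigma)$. The claim is that the collection $\{\pi(\sigma) : \sigma \in \mathcal F,\ I \subseteq \sigma\}$ is a complete simplicial fan in $I^\perp \cong \R^{r-N-\dim I}$, combinatorially isomorphic to $\link(I,\mathcal F)$.

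The key steps, in order, are as follows. (1) \emph{Simpliciality.} Since $\mathcal F$ is simplicial, each $\sigma \supseteq I$ is spanned by linearly independent rays, with those of $I$ among them; projecting along $\langle I\rangle$ kills exactly the rays spanning $I$ and leaves the remaining generators linearly independent in $I^\perp$, so $\pi(\sigma)$ is a simplicial cone of dimension $\dim\sigma - \dim I$. (2) \emph{Intersections along faces.} For $\sigma,\tau \supseteq I$ one has $\sigma\cap\tau \in \mathcal F$ and $\sigma\cap\tau \supseteq I$, and I would check that $\pi(\sigma)\cap\pi(\tau) = \pi(\sigma\cap\tau)$ is a common face of both projections; this uses that $I$ is a face of both $\sigma$ and $\tau$ so that the projection respects the face lattice above $I$. (3) \emph{Completeness.} Because $\mathcal F$ is complete, a neighborhood of any relative interior point of $I$ is covered by the cones containing $I$; projecting this local covering onto $I^\perp$ shows that the $\pi(\sigma)$ cover all of $I^\perp$. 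Finally, the combinatorial isomorphism with $\link(I,\mathcal F)$ is immediate from the bijection $\sigma \mapsto \pi(\sigma)$ on cones containing $I$.

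The main obstacle I anticipate is step (3), the completeness of the projected fan: one must argue that the covering of a punctured neighborhood of $\langle I\rangle$ (away from the lower-dimensional cones not containing $I$) projects to a covering of a full neighborhood of the origin in $I^\perp$, and hence of all of $I^\perp$ by the cone structure. The clean way to see this is to intersect $\mathcal F$ with a small sphere transverse to $I$ and observe that completeness of $\mathcal F$ forces the transverse slice to be a complete fan in $I^\perp$; the cones appearing in this slice are exactly the $\pi(\sigma)$ for $\sigma\supseteq I$. Steps (1) and (2) are routine linear algebra once the correct statement of the star/link structure above $I$ is in hand, which is why this lemma is reasonably left to the reader in the main text.
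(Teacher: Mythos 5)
The paper does not actually prove this lemma: the sentence immediately preceding it says the proof ``is left to the reader,'' so there is no argument in the text to compare yours against. Judged on its own merits, your proposal is the standard star-fan (quotient-fan) construction and it is correct; all three steps are true and can be completed along the lines you indicate. For step (3), the clean version of your argument is: pick $x_0$ in the relative interior of $I$; any cone of $\mathcal F$ containing $x_0$ meets $I$ in a face of $I$ containing a relative interior point, hence contains all of $I$, so (finiteness of $\mathcal F$) some neighborhood $U$ of $x_0$ meets only cones of the star, and completeness gives $U\subseteq\bigcup_{\sigma\supseteq I}\sigma$; since $\pi(x_0)=0$, for any $v\in I^{\perp}$ and small $\epsilon>0$ we get $\epsilon v=\pi(x_0+\epsilon v)\in\pi(\sigma)$ for some $\sigma\supseteq I$, hence $v\in\pi(\sigma)$ by rescaling.

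The one place where your wording hides the real work is step (2). ``The projection respects the face lattice above $I$'' is not automatic: linear projections do not in general commute with intersections of convex sets, so $\pi(\sigma)\cap\pi(\tau)=\pi(\sigma\cap\tau)$ genuinely needs the cone structure. It closes as follows. The inclusion $\supseteq$ is trivial. For $\subseteq$, take $x\in\pi(\sigma)\cap\pi(\tau)$ and write $x=y-a=z-b$ with $y\in\sigma$, $z\in\tau$, $a,b\in\langle I\rangle$. Since $\langle I\rangle=I-I$, write $a=a_{+}-a_{-}$ and $b=b_{+}-b_{-}$ with $a_{\pm},b_{\pm}\in I$. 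Then $x+a_{+}=y+a_{-}\in\sigma+I\subseteq\sigma$ and $x+b_{+}=z+b_{-}\in\tau+I\subseteq\tau$, so $w:=x+a_{+}+b_{+}$ lies in $\sigma\cap\tau$ (add $b_{+}\in I\subseteq\sigma$ to the first membership and $a_{+}\in I\subseteq\tau$ to the second), and $\pi(w)=x$. Given (2), $\pi(\sigma\cap\tau)$ is a face of $\pi(\sigma)$ because the faces of the simplicial cone $\pi(\sigma)$ are exactly the subcones spanned by subsets of its linearly independent generators. Note also that your final claim that the combinatorial isomorphism is ``immediate'' quietly uses (2) as well: if two distinct rays of the link projected to the same ray of $I^{\perp}$, then the two cones of dimension $\dim I+1$ they span with $I$ would have projections sharing a one-dimensional cone, so by (2) their intersection would be a common face of full dimension, forcing the two cones, and hence the two rays, to coincide. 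So the proposal is sound; it just needs these verifications written out.
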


\begin{lemma}
Let $Q=c^m$ and~$\varphi$ be the trivial embedding of $c^m$ into itself. 
If the fan~$\fanp{\primalMatrix}$ is complete then the fan $\mathcal F_{Q',M_{\varphi'}}$ is complete for any embedding $\varphi'$ of a word $Q'$ into $c^m$.
\end{lemma}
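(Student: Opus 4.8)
The plan is to exhibit $\subwordComplex[Q']$ as the link of a face of $\subwordComplex[c^m]$ and then to transport the completeness of $\fanpp{c^m}{M_\varphi}$ across that link by combining Lemma~\ref{lem:projection} with the deletion--contraction duality for Gale dual configurations. Throughout, write $c^m=(p_1,\dots,p_{\widetilde r})$ and view the embedding $\varphi'$ as an order-preserving injection $[r']\hookrightarrow[\widetilde r]$ whose image is $S_0:=\varphi'([r'])$, so that $Q'$ is exactly the subword of $c^m$ supported on $S_0$. Put $J:=[\widetilde r]\smallsetminus S_0$, the set of positions deleted in passing from $c^m$ to $Q'$.

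First I would record the combinatorial identification. Since $Q'$ contains a reduced expression of $\wo$, so does the subword $c^m\smallsetminus J=Q'$, hence $J$ is a face of $\subwordComplex[c^m]$. Straight from the definition of subword complex, a subset $I\subseteq S_0$ is a face of $\link(J,\subwordComplex[c^m])$ exactly when $c^m\smallsetminus(I\cup J)=Q'\smallsetminus I$ contains a reduced expression of $\wo$, so that
\[
\link\big(J,\subwordComplex[c^m]\big)\;\cong\;\subwordComplex[Q'],
\]
the position $\varphi'(i)$ of the left-hand complex corresponding to the position $i$ of $Q'$. Next I would apply Lemma~\ref{lem:projection} to the complete simplicial fan $\fanpp{c^m}{M_\varphi}$ (here $\varphi$ is the trivial embedding, so $\dualMatrix=\dualCountingMatrix$) and to the cone $\sigma_J$ spanned by the columns of $M_\varphi$ indexed by $J$; this cone is simplicial because $J$ is a face. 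Letting $V_J$ be its linear span and $\pi\colon\R^{\widetilde r-N}\to V_J^{\perp}$ the orthogonal projection, Lemma~\ref{lem:projection} yields a complete simplicial fan in $V_J^{\perp}$ realizing $\link(J,\subwordComplex[c^m])\cong\subwordComplex[Q']$, whose rays are the nonzero vectors $\pi(m_{\varphi'(i)})$ for $i\in[r']$ (nonzero since each $\sigma_{J\cup\{\varphi'(i)\}}$ is a simplicial cone of $\fanpp{c^m}{M_\varphi}$).

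The decisive step is to recognize this projected fan as $\fanpp{Q'}{M_{\varphi'}}$. By definition $M_\varphi$ is a Gale dual of $\dualCountingMatrix$, so the columns of $M_\varphi$ form the Gale dual configuration of the columns of $\dualCountingMatrix$. The restricted matrix $D_{\varphi'}$ is the \emph{deletion} of the columns $J$ from $\dualCountingMatrix$, and by the deletion--contraction duality for Gale duals (see \cite[Section~4.1]{de_loera_triangulations_2010}) its Gale dual configuration is the \emph{contraction} by $J$ of the configuration of $M_\varphi$, that is, the images of the columns $\{m_{\varphi'(i)}\}_{i\in[r']}$ in the quotient $\R^{\widetilde r-N}/V_J\cong V_J^{\perp}$. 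Thus the projected rays $\pi(m_{\varphi'(i)})$ constitute a Gale dual matrix of $D_{\varphi'}$, and since a Gale dual is determined only up to a linear transformation of its rows they agree with the columns of $M_{\varphi'}$ after a single linear automorphism of $V_J^{\perp}\cong\R^{r'-N}$. As both fans are carried by the same complex $\subwordComplex[Q']$ and their rays match under this automorphism, the fans are linearly isomorphic; a linear automorphism preserves completeness and simpliciality, and the projected fan is complete by Lemma~\ref{lem:projection}, so $\fanpp{Q'}{M_{\varphi'}}$ is a complete simplicial fan.

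I expect the only delicate point to be the deletion--contraction identity of the previous paragraph, which is precisely what lets the orthogonal projection of a Gale dual of $\dualCountingMatrix$ serve as a Gale dual of its column-restriction $D_{\varphi'}$; everything else is bookkeeping. As a consistency check, $\dim V_J^{\perp}=(\widetilde r-N)-|J|=r'-N$, matching the ambient dimension of $\fanpp{Q'}{M_{\varphi'}}$, and the full row rank $N$ of $D_{\varphi'}$ (guaranteed by Proposition~\ref{prop:part1}, since $Q'$ contains a reduced expression of $\wo$ and so some $N\times N$ submatrix of $D_{\varphi'}$ is nonsingular) ensures that the two sides of the duality live in spaces of the same dimension.
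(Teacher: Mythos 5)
Your proof is correct and takes essentially the same route as the paper: identify $\subwordComplex[Q']$ with the link of the face complementary to $\varphi'(Q')$, apply Lemma~\ref{lem:projection} to project that link to a complete fan, and recognize the projected rays as the columns of a Gale dual of $D_{\varphi'}$. The only difference is presentational: where you invoke the abstract deletion--contraction duality for Gale dual configurations, the paper establishes the same fact by hand, normalizing $\primalMatrix$ so that its columns on a facet $I'$ containing the deleted positions form an identity matrix and then deleting the corresponding columns and rows to exhibit the Gale dual $M_{\varphi'}$ explicitly.
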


\begin{proof}
The idea of the proof is to obtain the fan associated to $Q'$ as a projection of the fan associated to $Q$. This is done for appropriate choices of~$\primalMatrix$ and~$M_{\varphi'}$. Since different choices of Gale dual matrices only effect the fans by linear transformations, and particularly do not affect their completeness, the result will follow. Throughout the proof we denote by~$\widetilde r$ the length of the word~$Q=c^m$. We also assume that $Q'$ contains at least one reduced expression of~$\wo$, otherwise its associated fan would be empty and there would be nothing to prove.  

Let~$I\subset [\widetilde r]$ be the face of~$\subwordComplex[c^m]$ containing the positions in~$c^m$ which are not in $\varphi'(Q')$. Then, there is a natural isomorphism
\begin{equation}\label{eq:link}
\subwordComplex[{Q'}] \cong \link(I,\subwordComplex[c^m]) \cong \link(I, \fanp{\primalMatrix})
\end{equation}

Let~$I'\subset [\widetilde r]$ be a facet of~$\subwordComplex[c^m]$ containing~$I$. 
We can assume that the matrix~$\primalMatrix$ restricted to the columns with indices in~$I'$ is the identity matrix. If it is not, we could multiply its inverse with~$\primalMatrix$ from the left to obtain a new matrix with that desired property. The result would still be a Gale dual matrix~$\primalMatrix$ of~$\dualMatrix$. 
Since $I\subset I'$, the columns of~$\primalMatrix$ with indices in $I$ are certain canonical basis vectors $\{e_{a_1},\dots,e_{a_k}\}$, where $k=|I|$. Let $M'$ be the matrix obtained from~$\primalMatrix$ by removing the columns with indices in $I$ and the rows with indices in~$\{a_1,\dots,a_k\}$. The main point of the proof is to observe that $M'$ is a Gale dual matrix~$M_{\varphi'}$ of $D_{\varphi'}$. This is easily deduced from the fact that $D_{\varphi'}$ is obtained from~$\dualMatrix$ by removing the columns with indices in $I$, and that~$\primalMatrix$ has a zero entry in every position which is in a column in $I$ and in a row not in~$\{a_1,\dots,a_k\}$.
Taking~$M_{\varphi'}=M'$, we deduce that the fan~$\mathcal F_{Q',M_{\varphi'}}$ is the projection of $\link(I, \fanp{\primalMatrix})$ to the orthogonal space of the cone corresponding to $I$. By Lemma~\ref{lem:projection} and Equation~\eqref{eq:link}, we obtain that~$\mathcal F_{Q',M_{\varphi'}}$ is a complete fan realizing~$\subwordComplex[Q']$.
\end{proof}

\subsection{Proof for the case $c^m$} \label{sec:cm_part}
Let $Q=c^m$ and~$\varphi$ be the trivial embedding of $c^m$ into itself. In order to prove that the fan $\fanp{\primalMatrix}$ is a complete simplicial fan realization of~$\subwordComplex[Q]$ we follow the two steps in Theorem~\ref{thm:fan_reformulation}. 
The Signature condition $(S)$ is equivalent to the statement in Proposition~\ref{prop:part1}: since~$\varphi$ is the trivial embedding then~$\primalMatrix^G=\dualCountingMatrix$, which is a Coxeter signature matrix for the pair~$(Q,\wo)$ as desired. For the Injectivity part $(I)$, we need to prove that there is a cone whose interior is not intersected by any other cone. This is done by inspection in each case. Explicit formulas for the Gale dual matrices~$\primalMatrix=M_{c,m}=\dualCountingMatrix^G$ can be found in Appendix~\ref{ap:Gale_dual}.

\subsubsection{Type $A_1$} 
This is the trivial case. The fan has $m$ rays given by the $m-1$ negative basis basis vectors in $\R^{m-1}$ together with the vector with all entries equal to one. The maximal cones correspond to subsets of $m-1$ rays and the subword complex is isomorphic to the boundary of an $m-1$ dimensional simplex.

\subsubsection{Type $A_2$} 
In this case we fix a cone~$C^*$ corresponding to the negative orthant and check that its interior is not intersected by any other cone. Let~$C$ be a cone corresponding to a subword of~$c^m$ whose complement is a reduced expression of~$\wo$. There are three possibilities:

\medskip
\paragraph{\it The reduced expression uses 1 negative basis vector}
This case follows from the fact that~$\primalMatrix^G$ is a Coxeter signature matrix, which implies that any two adjacent cones lie in opposite sides of the hyperplane spanned by their interesection, see Lemma~\ref{lem:complete_fan2}. Alternatively, one can check by inspection that flipping $-e_i$ in the negative orthant gives a vector whose $i$th coordinate is positive. 

\medskip
\paragraph{\it The reduced expression uses 2 negative basis vectors}
In this case, the cone $C$ uses all negative basis vectors except for two, and two of the last three columns of $M_{12,m}$. Denote by $v_1,v_2\in \R^2$ the restrictions of these two columns to the coordinates corresponding to the negative basis vectors that are not used in $C$. Proving that $C$ does not intersect the negative orthant is equivalent to show that the cone spanned by $v_1$ and $v_2$ does not intersect the negative orthant in $\R^2$. Figure~\ref{fig:A2_2negativevectors} shows all three possible cases. 
In each case, we provide a vector $v\in \R^2$ with non-negative entries whose inner product with $v_1$ and $v_2$ is non-negative. The hyperplane (in this case a line) orthogonal to~$v$ separates the negative orthant and the cone spanned by $v_1$ and $v_2$.

\begin{figure}[htbp]
\centering
\begin{subfigure}{0.32\textwidth} \centering
\includegraphics[height=3.8cm]{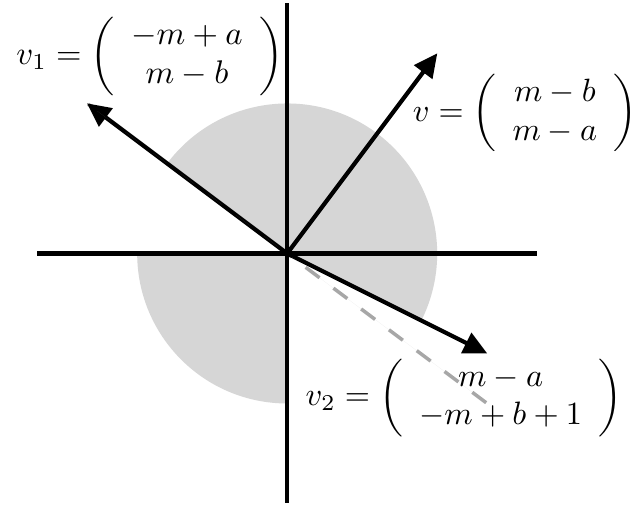}  
\caption{12-1 with embedding 2\12.}
\label{fig:A2_2negativevectors_a}
\end{subfigure} 
\begin{subfigure}{0.32\textwidth} \centering
\includegraphics[height=3.8cm]{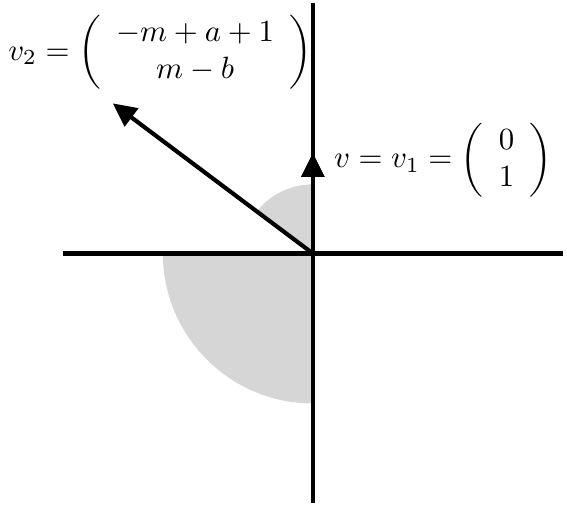}  
\caption{21-2 with embedding \212.}
\label{fig:A2_2negativevectors_b}
\end{subfigure} 
\begin{subfigure}{0.32\textwidth} \centering
\includegraphics[height=3.8cm]{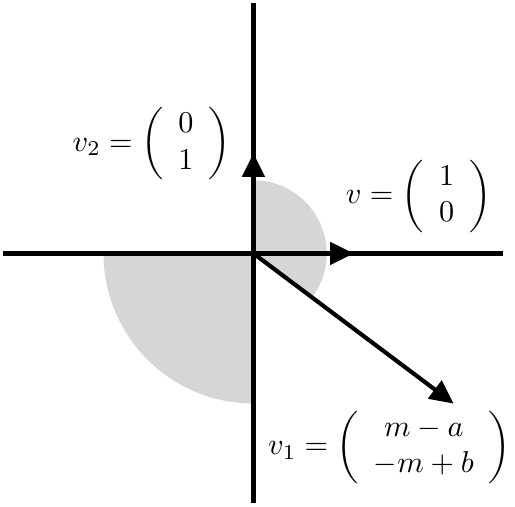}  
\caption{21-2 with embedding 21\2.}
\label{fig:A2_2negativevectors_c}
\end{subfigure} 
\caption{The three cases in type $A_2$ using 2 negative basis vectors.}
\label{fig:A2_2negativevectors}
\end{figure}

\noindent
Figure~\ref{fig:A2_2negativevectors_a} corresponds to the case where the reduced expression is~$121$. The parameters $a\leq b \leq m$ denote the copy of $c$ in which the negative basis vectors that are not used in $C$ are taken in the power $c^m$ (counted from left to right). The bold number in $2\12$ is the letter in the reduced expression that is in the last three letters of $c^m$, and the non-bold numbers correspond to the vectors $v_1$ and $v_2$. Similarly, Figures~\ref{fig:A2_2negativevectors_b} and~\ref{fig:A2_2negativevectors_c} illustrate the two possible cases for the reduced expression $212$.

\medskip
\paragraph{\it The reduced expression uses 3 negative basis vectors}

Let $v_1,v_2,v_3\in \R^3$ be the restriction of the last three columns of $M_{12,m}$ to the three negative basis vectors that are not used in the cone~$C$. As before, 
we prove that the cone spanned by $v_1,v_2,v_3$ does not intersect the negative orthant in~$\R^3$. There are two possible cases corresponding to the reduced expressions $121$ and $212$: 

\smallskip
\begin{minipage}{.4\textwidth}
\footnotesize
\[
\left(
\begin{array}{ccc}
 -m+a & 1  & m-a  \\
 m-b & 0  & -m+b+1  \\
 -m+c & 1  & m-c  
\end{array}
\right)
\]
\end{minipage}
\qquad
\begin{minipage}{.4\textwidth}
\footnotesize
\[
\left(
\begin{array}{ccc}
 m-a & 0  & -m+a+1  \\
 m+b & 1  & m-b  \\
 m-c & 0  & -m+c+1  
\end{array}
\right)
\]
\end{minipage}
\smallskip

\noindent
The columns are the given by the vectors~$v_1,v_2,v_3$. The parameters~$a\leq b \leq c \leq m$ denote the copy of~$c$ in which the negative basis vectors that are not used in~$C$ are taken in the power~$c^m$ (counted from left to right). In both cases, the hyperplane orthogonal to the vector~$\small v=(0,m-c,m-b)$ separates the negative orthant and the cone spanned by~$v_1,v_2,v_3$. This follows from the fact that~$v$ has non-negative entries and that the inner product of~$v$ with $v_1,v_2$ and $v_3$ is non negative. 

\subsubsection{Type $A_3$} 
This case is explicitly shown here for the bipartite Coxeter element $c=(s_2,s_1,s_3)$ using the matrix $M_{213,m}$ in Corollary~\ref{cor:fan_bipartite213}. The case $c=(s_1,s_2,s_3)$ is similar and the remaining cases are obtained by symmetry. 
As in the previous cases, we fix a cone~$C^*$ corresponding to the negative orthant and check that its interior is not intersected by any other cone~$C$.  In contrast to the type $A_2$ case, we work with the transpose matrix $M_{213,m}^t$ for convenience, and use row vectors instead of columns. We also consider parameters corresponding to the copy of~$c$ in which letters appear counted from right to left, so that the parameters coincide with the subindices of the matrices $B_{213,i}$.

There are six possible cases depending on the number~$k$ of negative basis vectors that are used in the reduced expression corresponding to~$C$. These are studied case by case below. We denote by~$v_1,v_2,\dots ,v_k\in \R^k$ the restrictions of the last six rows of $M_{213,m}^t$ to the~$k$ negative basis vectors that are not used in~$C$. As before, proving that $C$ does not intersect the negative orthant is equivalent to show that the cone spanned by $v_1,\dots ,v_k$ does not intersect the negative orthant in $\R^k$. This is done by providing a vector $v\in \R^k$ with non-negative entries whose inner product with $v_1,\dots,v_k$ is non-negative. The hyperplane orthogonal to $v$ separates the two cones in consideration. The vector $v$ is given in terms of some parameters~$m-2\geq a\geq b \geq \dots$, which denote the copy of~$c$ in which the negative basis vectors that are not used in~$C$ are taken in the power~$c^{m-2}$ (counted from right to left). The vectors $v_1,\dots ,v_k$ also have a fixed form with these parameters. We only provide the vector $v$ for space convenience.
The case $k=5$ is proven slightly different and details are given below. Recall the two functions $S(i)=i^2$ and $T(i)=i(i+1)/2$.

\medskip
\paragraph{\it The reduced expression uses 1 negative basis vector}
As before, this case follows from the fact that~$\primalMatrix^G$ is a Coxeter signature matrix, which implies that any two adjacent cones lie in opposite sides of the hyperplane spanned by their interesection, see Lemma~\ref{lem:complete_fan2}. One can also check by inspection that flipping $-e_i$ in the negative orthant gives a vector whose $i$th coordinate is positive. 

\medskip
\paragraph{\it The reduced expression uses 2 negative basis vectors}\label{sssec:2letters}
Given the symmetry of the problem we need to consider only the reduced expressions 123121, 213213, 132132, 123212, 212321 up to commutation of letters. The reason is that for the bipartite case, $\dualCountingMatrix$ presents a nice symmetry with respect to interchanging consecutive letters 1 and 3 in $c^m$. In fact, two cones $C$ and $C^*$ intersect if and only if the resulting cones after doing this operation intersect. The same reasoning applies for any value of $k$. Below is the list of all possible cases (up to symmetry) together with a possible choice for the non-negative vector $v$.  

\begin{description}
\item[123121] For this reduced expression there is one possible case
\begin{itemize}
\item 12-1321 with embedding $2\1\3\2\13$: $v=(2T(b+1),T(a+1))$ 
\end{itemize}
\item[213213] For this reduced expression there are two possible cases
\begin{itemize}
\item 21-3213 with embedding $21\3\2\1\3$: $v=(1,0)$ 
\item 23-1213 with embedding $2\13\2\1\3$: \\symmetric to 21-3213 with embedding $21\3\2\1\3$ above
\end{itemize}
\item[132132] For this reduced expression there is one possible case
\begin{itemize}
\item 13-2132 with embedding $\2\1\3\213$: $v=(1,0)$ 
\end{itemize}
\item[123212] For this reduced expression there are zero possible cases
\item[212321] For this reduced expression there is one possible case
\begin{itemize}
\item 21-2321 with embedding $\21\3\2\13$: $v=(T(b+1),2T(a+1))$ 
\end{itemize}
\end{description}

\medskip
\paragraph{\it The reduced expression uses 3 negative basis vectors}\label{sssec:3letters}

\begin{description}
\item[123121] For this reduced expression there are two possible cases
\begin{itemize}
\item 123-121 with embedding $2\13\2\13$: $v=(2T(b+1),T(a+1),0)$ 
\item 121-321 with embedding $21\3\2\13$: $v=(0,T(c+1),2T(b+1))$
\end{itemize}
\item[213213] For this reduced expression there are five possible cases
\begin{itemize}
\item 213-213 with embedding $\2\1\3213$: $v=(0,1,0)$ 
\item 213-213 with embedding $\2\1321\3$: $v=(T(c),0,2T(a+1))$
\item 213-213 with embedding $\2132\1\3$: \\$v=((b+1)(c+1)(b+c+2),2(a+1)(a+2)(c+1),2(a+1)(a+2)(b+1))$
\item 213-213 with embedding $213\2\1\3$: $v=(1,0,0)$
\item 213-231 with embedding $\21\32\13$: \\symmetric to 213-213 with embedding $\2\1321\3$ above
\end{itemize}
\item[132132] For this reduced expression there is one possible case
\begin{itemize}
\item 132-132 with embedding $2\1\3\213$: \\$v=(2(b+1)(c+1)(c+2),2(a+1)(c+1)(c+2),(a+1)(b+1)(a+b+2))$ 
\end{itemize}
\item[123212] For this reduced expression there is one possible case
\begin{itemize}
\item 123-212 with embedding $\2\13\213$: $v=(0,T(c),2T(b+1))$ 
\end{itemize}
\item[212321] For this reduced expression there is one possible case
\begin{itemize}
\item 212-321 with embedding $21\3\2\13$: $v=(0,2T(c+1),T(b+1))$ 
\end{itemize}
\end{description}

\medskip
\paragraph{\it The reduced expression uses 4 negative basis vectors}\label{sssec:4letters}

\begin{description}
\item[123121] For this reduced expression there are three possible cases
\begin{itemize}
\item 1231-21 with embedding $\2\13213$: $v=(0,T(c),2T(b+1),0)$ 
\item 1231-21 with embedding $\2132\13$: \\$v=(0,(c+d+2)(c+1)(d+1),2(b+1)(b+2)(d+1),2(b+1)(b+2)(c+1))$ 
\item 1231-21 with embedding $213\2\13$: $v=(0,T(d+1),0,2T(b+1))$  \\
\end{itemize}

\item[213213] For this reduced expression there are four possible cases. For all these cases, the negative basis vectors correspond to the letters of a \defn{diamond shape} 2132. We define the \defn{diamond vector} $\DiamondVector$ as the vector: 
{\scriptsize
\[
\left((b+1)(c+1)(d+2)(b+c-2d), 2(a+2)(c+1)(d+2)(a-d), 2(a+2)(b+1)(d+2)(a-d), (2a-b-c)(a+2)(b+1)(c+1) \right)
\]}
\begin{itemize}
\item 2132-13 with embedding $2\1\3213$: $v= \DiamondVector$
\item 2132-13 with embedding $2\1321\3$: $v= \DiamondVector$ 
\item 2132-13 with embedding $2132\1\3$: $v= \DiamondVector$ 
\item 2132-31 with embedding $21\32\13$: $v= \DiamondVector$
\end{itemize}

\item[132132] For this reduced expression there are two possible cases
\begin{itemize}
\item 1321-32 with embedding $21\3\213$: \\$v=(2(b+1)(c+1)(c+2),2(a+1)(c+1)(c+2),(a+1)(b+1)(a+b+2),0)$ 
\item 1323-12 with embedding $2\13\213$: \\symmetric to 1321-32 with embedding $21\3\213$ above
\end{itemize}

\item[123212] For this reduced expression there is one possible case
\begin{itemize}
\item 1232-12 with embedding $2\13\213$: \\$v=(2(c+1)(d+1)(d+2),0,2(a+1)(d+1)(d+2),(a+1)(c+1)(a+c+2))$ 
\end{itemize}

\item[212321] For this reduced expression there are three possible cases
\begin{itemize}
\item 2123-21 with embedding $\2\13213$: $v=(0,0,T(d),2T(c+1))$ 
\item 2123-21 with embedding $\2132\13$: \\$v=((b+d+2)(b+1)(d+1),2(a+1)(a+2)(d+1),0,2(a+1)(a+2)(b+1))$ 
\item 2123-21 with embedding $213\2\13$: $v=(T(b+1),2T(a+1),0,0)$ 
\end{itemize}
\end{description}

\medskip
\paragraph{\it The reduced expression uses 5 negative basis vectors}\label{sssec:5letters}

In this case, we give a unbounded polyhedron $P$ that contains the cone spanned by $v_1,\dots,v_5$ and does not intersect the negative orthant in~$\R^5$. The polyhedron is the sum of a 3-dimensional subspace and a cone spanned by three vectors. This is shown explicitly for the reduced expression 12321-2, the other cases are obtained similarly. There are two possibilities in this case: $\213213$ and $213\213$, where the letter $\2$ corresponds to either the first or the fourth row of the last 6 rows in $M_{213,m}^t$. The cone is spanned by the rows of the $5\times 5$ matrix~$M$~(when removing row 1 or 4):
\[
\small
	M=\left(
\begin{array}{ccccc}
  -T(a+1) & S(b+2)  & -T(c+1) & S(d+2) & -T(e+1) \\
  -T(a)+1 & 2T(b+1) & -T(c+1) & 2T(d+1) & -T(e)+1 \\
  -T(a+1) & 2T(b+1) & -T(c)+1 & 2T(d+1) & -T(e+1) \\
  T(a+1)  & -S(b+2)+1 & T(c+1) & -S(d+2)+1 & T(e+1) \\
  T(a) &  -2T(b+1) & T(c+1) & -2T(d+1) & T(e) \\
  T(a+1) & -2T(b+1)  & T(c) & -2T(d+1) & T(e+1)
\end{array}
\right),
\]
where $a>b\geq c>d\geq e\geq1$ correspond to the copy of $c$ (counted from right to left) where the 5 letters are taken. The two possible cones are contained in the polyhedron $P$ whose lineal subspace is generated by row $1$, $5$ and $6$ and whose cone is generated by the vectors $(1,0,0,0,1),(0,0,1,0,0)$ and $(0,1,0,1,0)$. If $a+c=2b$, we have

\[
	\left(
\begin{array}{ccc}
  0 & \frac{-1}{(a+1)}  & \frac{1}{(a+1)} \\
  \frac{-(d+1)}{(b+2)(b-d)} &  \frac{-(a+2)}{2(b+2)(b-d)} & \frac{a-2(d+1)}{2(b+2)(b-d)}   \\
  \frac{(b+1)}{(d+2)(b-d)} & \frac{(a+2)}{2(d+2)(b-d)} & \frac{2(b+1)-a}{2(d+2)(b-d)}
\end{array}
\right)\cdot M_{1,5,6}=\left(
\begin{array}{ccccc}
  1  & 0 & \frac{-(c+1)}{(a+1)} & 0 & \frac{(e+1)}{(a+1)} \\
  0  & 1 &  \frac{-2(c+1)}{a+c+4} & 0 & \frac{2(e+1)(a-e)}{(a+c+4)} \\
  0  & 0 & 0 & 1 & \frac{-(e+1)(a-e)}{(d+2)(a+c-2d)}
\end{array}
\right),
\]
and it is impossible to have a linear combination of the rows resulting in a vector with all entries negative. In any other case, we have
\[
	\left(
\begin{array}{ccc}
  \frac{2(b+1)}{(a+1)} & \frac{2(b+1)-c}{(a+1)}  & \frac{(c+2)}{(a+1)} \\
  \frac{(a+c+2)}{(b+2)} &  \frac{(a+2)}{(b+2)} & \frac{(c+2)}{(b+2)}   \\
  \frac{2(b+1)}{(c+1)} & \frac{(a+2)}{(c+1)} & \frac{2(b+1)-a}{(c+1)}
\end{array}
\right)\cdot \frac{M_{1,5,6}}{a+c-2b}=\left(
\begin{array}{ccccc}
  1  & 0 & 0 &  \frac{2(d+2)(b-d)}{(a+1)(a+c-2b)} & \frac{-(e+1)(2b-c-e)}{(a+1)(a+c-2b)} \\
  0  & 1 &  0 & \frac{(d+2)(a+c-2d)}{(b+2)(a+c-2b)} & \frac{-(e+1)(a-e)}{(b+2)(a+c-2b)} \\
  0  & 0 & 1 &  \frac{2(d+2)(b-d)}{(c+1)(a+c-2b)} & \frac{-(e+1)(a-e)}{(c+1)(a+c-2b)}
\end{array}
\right),
\]
and it is again impossible to have a linear combination of the rows resulting in a vector with all entries negative. Therefore, the subspace generated by the rows~$1$, $5$ and $6$ does not intersect the negative orthant, and so neither does the polyhedron $P$.

\medskip
\paragraph{\it The reduced expression uses 6 negative basis vectors}\label{sssec:6letters}

We need to show that the cone spanned by~$v_1,v_2,\dots,v_6$ does not intersect the negative orthant in~$\R^6$. Since all reduced expressions of~$\wo$ contain $2132$ as a subword (up to commutations), we can restrict the study to the coordinates of~$v_1,v_2,\dots,v_6$ corresponding to these four letters. The resulting vectors are the rows of the following matrix
\[
\small
	M=\left(
\begin{array}{cccc}
  S(a+2) & -T(b+1)  & -T(c+1) & S(d+2) \\
  2T(a+1) &  -T(b){\bf +1} & -T(c+1) & 2T(d+1)  \\
  2T(a+1) & -T(b+1)  & -T(c){\bf +1} & 2T(d+1)  \\    
  -S(a+2){\bf +1} & T(b+1)  & T(c+1) & -S(d+2){\bf +1} \\  
  -2T(a+1) &  T(b) & T(c+1) & -2T(d+1)  \\
  -2T(a+1) & T(b+1)  & T(c) & -2T(d+1)  
\end{array}
\right),
\]

\noindent
where the parameters $a\geq b,c>d$ denote the copy of $c$ in which the letters of the \emph{diamond shape}~$2132$ appear in $c^{m-2}$ (counted from right to left). In all possible cases we can use the diamond vector $v=\DiamondVector$ defined above. The inner product of $\DiamondVector$ with any of the six rows of $M$ is a non-negative number. More explicitly, the inner product is equal to zero for any row not containing a~``$+1$" (rows 1,5 and 6), and strictly greater than zero for any row containing a~``$+1$" (rows 2,3 and 4). The hyperplane orthogonal to $v$ separates the positive span of~$v_1,v_2,\dots,v_6$ and the negative orthant.

\section{Polytopality}\label{sec:polytopality}

This section concerns the question of polytopality of spherical subword complexes and multi-associahedra. Following~\cite{de_loera_triangulations_2010}, we say that a fan is \defn{regular} if it is the normal fan of a polytope. This terminology comes from the relation between normal fans of polytopes and regular triangulations of point configurations, see~\cite[Section~2.2 and Section~9.5.3]{de_loera_triangulations_2010}.

\subsection{Regularity of $\fanp{\primalMatrix}$}

Let~$\subwordComplex[Q]$ be a spherical subword complex of type~$A_n$ with $n\leq 3$, and $\varphi$ be an embedding of $Q$ into $c^m$. By Theorem~\ref{thm:complete_fan}, the fan~$\fanp{\primalMatrix}$ is a complete simplicial fan realizing~$\subwordComplex[Q]$. One natural question is whether this fan is the normal fan of a polytope. The answer to this question is negative in general. In fact, most of the fans~$\fanp{\primalMatrix}$ are not regular. 

We restrict the verification of regularity to the family of multi-associahedra~$\Delta_{\ell,k}$, whose facets are in bijection with $k$-triangulations of a convex $\ell$-gon. Every other spherical subword complex of type $A$ can be obtained as the link of a face in a multi-associahedron (this is known as the universality of multi-associahedra of type $A$~\cite[Proposition 5.6]{PilaudSantos-brickPolytope}, see~\cite[Theorem 2.15]{ceballos_subword_2013} for the analogous universality result for all finite types). The multi-associahedron $\Delta_{n+2k+1,k}$ can be obtained as a subword complex~$\subwordComplex[Q]$ for any word $Q$ equal to $c^k\wo(c)$ up to commutations, where~$\wo(c)$ denotes the lexicographically first reduced expression of $\wo$ in $c^\infty$. We refer to~\cite{ceballos_subword_2013} for more details in this connection.
We used the computer algebra system Sage~\cite{sage} to produce all the words~$Q$ satisfying this property and tested regularity of the fans corresponding to embeddings of~$Q$ into a longer word~$c^m$ for different values of $m$ in types $A_2$ and $A_3$. The results are summarized in Tables~\ref{tab:polytopality_A2} and~\ref{tab:polytopality_A3}.

\begin{small}
\begin{table}[!htbp]
\begin{center}
\begin{tabular}{|c|c|c|cc|}
\hline
&&&\\[-.13in]
$\Delta_{2k+3,k}$ & $A_2$ & $m$ & Regular fans & Non-regular fans \\[0.05in]
\hline
&&&\\[-.13in]
$\Delta_{5,1}$ & $k=1$ & $m\leq 11$ & 6006 & 0\\
$\Delta_{7,2}$ & $k=2$ & $m\leq 11$ & 12870 & 0 \\
$\Delta_{9,3}$ & $k=3$ & $m\leq 11$ & 16016 & 0 \\
$\Delta_{11,4}$ & $k=4$ & $m\leq 11$ & 12376 & 0 \\
$\Delta_{13,5}$ & $k=5$ & $m\leq 10$ & 1360 & 0\\
$\Delta_{15,6}$ & $k=6$ & $m\leq 10$ & 306 & 0\\
\hline
\end{tabular}
\vspace{.05in}
\end{center}
\caption{\label{tab:polytopality_A2} 
Results of regularity tests on complete simplicial fans of type $A_2$.}
\vspace{-10pt}
\end{table}
\end{small}

\begin{small}
\begin{table}[!htbp]
\begin{center}
\begin{tabular}{|c|c|c|cc|}
\hline
&&&\\[-.13in]
$\Delta_{2k+4,k}$ & $A_3$ & $m$ & Regular fans & Non-regular fans \\[0.05in]
\hline
&&&\\[-.13in]
$\Delta_{6,1}$ & $k=1$ & $m\leq 12$ & 1 144 293 & 136 \\
$\Delta_{8,2}$ & $k=2$ & $m\leq 11$ & 66 293 & 743 560 \\
$\Delta_{10,3}$ & $k=3$ & $m\leq 10$ & 0 & 144 939 \\
\hline
\end{tabular}
\vspace{.05in}
\end{center}
\caption{\label{tab:polytopality_A3} 
Results of regularity tests on complete simplicial fans of type $A_3$.}
\vspace{-10pt}
\end{table}
\end{small}

The number of polytopal realizations of $\Delta_{8,2}$ is a significant improvement to the known polytopal realizations in~\cite{bokowski_symmetric_2009,ceballos_associahedra_2012}, where the authors use computational methods to obtain the polytopes. It is also quite surprising to have so many different non-regular fans realizing~$\Delta_{10,3}$. 

\subsection{Regularity obstruction for $\fanp{\primalMatrix}$}

Although the regularity of the fan depends on the choice of embedding, our computational results reveal that more than a hundred thousand possible fans realizing the multi-associahedron $\Delta_{10,3}$ are not the normal fan of a polytope. We do not know if there are embeddings into longer words $c^m$ for which the fan is regular. 
After investigating the fans of $\Delta_{10,3}$, we obtained the following \emph{obstruction} to the regularity of $\fanp{\primalMatrix}$. The fans associated to the subword complex $\Obs:=\subwordComplex[1212321212]$ embedded in words $c^m$ with $m\leq 10$ and $c$ any Coxeter element are not regular. This obstruction is \emph{minimal}: removing any letter to $\Obs$ yields a regular complete fan.

The subword complex $\Obs$ appears to be a possible candidate to disprove the polytopality conjecture for subword complexes. The $f$-vector of $\Obs$ is $(1, 9,30,42,21)$. Notice that the vertex corresponding to the letter $3$ is not a vertex of $\Obs$ since every expression of $w_\circ$ in $1212321212$ uses this letter. In fact, this word contains two possible reduced expressions, namely $123212$ and $212321$ which have opposite signs. As it turns out, according to the enumeration of 3-dimensional manifolds on 9 vertices of Altshuler--Steinberg, the subword complex $\Obs$ is polytopal, see \cite[Table~3]{altshuler_enumeration_1976}. Here is an explicit construction of a 4-dimensional polytope with~9 vertices whose boundary complex is $\Obs$. This construction is due to Francisco Santos and was obtained during the conference FPSAC 2013 in Paris. We are grateful to him and his great insight.

\begin{example}\label{ex:Santos}
	Let $\{1,2,3,4,5,6,7,8,9\}$ be the ground set for a simplicial complex constructed as follows. Let $C_1=(1234)$ and $C_2=(56789)$ be two cycles and $J=C_1\ast C_2$ their join. The simplicial complex $J$ is the boundary complex of the polar dual of the product of a square and a pentagon. The simplicial complex $\Obs$ is obtained from $J$ by removing $\{1,4\}\ast (56789)$ and filing back the following 6 tetraedra: $\{1,5,6,9\},\{1,6,7,9\},\{1,7,8,9\},\{4,5,6,9\},\{4,6,7,9\}$ and $\{4,7,8,9\}$. The first three and the last three triangulate a pentagonal pyramid each, together triangulating a pentagonal bipyramid. The following coordinates give an explicit realization of this polytope.
	\[
		\begin{array}{cccc}
			v_1=(-1,1,0,0) & v_2=(0,1,0,0) & v_3=(1,0,0,0) \\
			v_4=(1,-1,0,0) & v_5=(0,0,1,0) & v_6=(0,0,0,1) \\
			v_7=(0,0,-1,0) & v_8=(0,0,0,1) & v_9=(-1/4,-1/4,1,1).
		\end{array}
	\]
	The bijection sends the letters of the word $1212321212$ from left to right to the vertices $v_i$ with $1\leq i\leq 9$, skipping the letter $3$.
\end{example}

%

%
%
\section{Fan realization space}\label{sec:realization_space}

This section presents a large connected component of the fan realization space of every spherical subword complex of type~$A_3$ and every multi-associahedron~$\Delta_{2k+4,k}$. 
It is not a full dimensional component, but it contains a lot of information about the space. For instance, it contains all realizations of~$\subwordComplex[c^m]$ from Theorem~\ref{thm:complete_fan}, corresponding to all possible embeddings of $Q=c^m$ into a longer word $c^{m'}$. Similarly as in the previous section, we tested regularity of the fans arising in this part of the realization space. Surprisingly, non of the fans we tested for long words~$Q$ in type~$A_3$ were regular. In particular, non of the fans we tested realizing the multi-associahedron~$\Delta_{10,3}$ were regular.  
Given a collection of real parameters~${\bf a}=\{a_i,b_i,c_i\}_{i=1,\dots,m}$ define the ${\bf a}$-counting matrix~$\dualCountingMatrixp{123}{\bf a}$ as the matrix

\begin{minipage}{.4\textwidth}
\footnotesize
\[
	\dualCountingMatrixp{123}{\bf a}=
	\left(\begin{array}{c}
	\begin{array}{c@{\hspace{0.25cm}}|@{\hspace{0.5cm}}c@{\hspace{0.5cm}}|@{\hspace{0.25cm}}c}
	&& \\[1ex]
	\widetilde D_m^{\bf a} & \cdots & \widetilde D_1^{\bf a}\\
	&& \\[1ex]
	\end{array}
	\end{array}\right)_{6\times 3m}
\]
\end{minipage}
\qquad
\begin{minipage}{.4\textwidth}
\footnotesize
\[
	\widetilde D_i^{\bf a}=\left(\begin{array}{ccc}
	1 & 0 & 0 \\
	a_i & m-b_i+1 & 0 \\
	\binom{a_i+1}{2} & (m-b_i+1)(b_i) & \binom{m-c_i+2}{2} \\
	0 & 1 & 0 \\
	0 & b_i & m-c_i+1 \\
	0 & 0 & 1
	\end{array}\right),
\]
\end{minipage}

\medskip
\noindent
and denote by $M_{123,\bf a}$ any Gale dual matrix of $\dualCountingMatrixp{123}{\bf a}$. We are particularly interested in parameters satisfying the \defn{$c$-signature inequalities} in Figure~\ref{fig:parameter_inequalities_123}, for $c=(s_1,s_2,s_3)$.
The case where $a_i=b_i=c_i=i$ gives rise to the counting matrix $\dualCountingMatrixp{123}{m}$ in Appendix~\ref{app:matrices}.

\begin{figure}[!htbp]
  \begin{center}
  \begin{tikzpicture}[scale=0.78]

  \node (s1-1) at (1,1) {\small$a_m$};
  \node (s2-1) at (2,2) {\small$b_m$} edge[<-,thick] (s1-1);
  \node (s3-1) at (3,3) {\small$c_m$} edge[<-,thick] (s2-1);

  \node (s1-2) at (3,1) {\small$a_{m-1}$} edge[<-,thick] (s2-1);
  \node (s2-2) at (4,2) {\small$b_{m-1}$} edge[<-,thick] (s1-2) edge[<-,thick] (s3-1);
  \node (s3-2) at (5,3) {\small$c_{m-1}$} edge[<-,thick] (s2-2);

  \node (s1-3) at (5,1) {} edge[<-,thick] (s2-2);
  \node (s2-3) at (6,2) {} edge[<-,thick] (s1-3) edge[<-,thick] (s3-2);
  \node (s3-3) at (7,3) {} edge[<-,thick] (s2-3);

  \node (s1-4) at (7,1) {\small$a_2$} edge[<-,thick] (s2-3);
  \node (s2-4) at (8,2) {\small$b_2$} edge[<-,thick] (s1-4) edge[<-,thick] (s3-3);
  \node (s3-4) at (9,3) {\small$c_2$} edge[<-,thick] (s2-4);

  \node (s1-5) at (9,1) {\small$a_1$} edge[<-,thick] (s2-4);
  \node (s2-5) at (10,2) {\small$b_1$} edge[<-,thick] (s1-5) edge[<-,thick] (s3-4);
  \node (s3-5) at (11,3) {\small$c_1$} edge[<-,thick] (s2-5);

  \node (eq) at (-3.6,2) {
  \small
  $
\begin{array}{rcc}
 a_i-a_{i-1} &  > & 0  \\
 b_i-b_{i-1} &  > & 0  \\
 c_i-c_{i-1} &  > & 0  \\ 
 2b_i-c_i-a_{i-1}+2b_j-c_j-a_{j-1} &  > & 0  \\ 
 c_{i+1}+a_i-2b_i+c_{j+1}+a_j-2b_j &  > & 0  \\ 
\end{array}
$
  };  
  
  \draw[dashed,red] (1.4,2)--(2.7,3.3)--(3.8,3.3)--(2.5,2)--(3.8,0.7)--(2.7,0.7) -- cycle;
  \draw[dashed,red] (7.4,2)--(8.7,3.3)--(9.8,3.3)--(8.5,2)--(9.8,0.7)--(8.7,0.7) -- cycle;
  
  \end{tikzpicture}
  \end{center}
  \caption{Signature inequalities for $c=(s_1,s_2,s_3)$ in type $A_3$. The dashed shapes on the right show an example of the parameters involved in the fourth equation.}
  \label{fig:parameter_inequalities_123}
\end{figure}

\begin{theorem}\label{thm:fan_real_parameters_123}
Let~$c=(s_1,s_2,s_3)$ be a Coxeter element of type $A_3$ and $Q=c^m$, with $m\geq 3$. For any choice of real parameters ${\bf a}=\{a_i,b_i,c_i\}_{i=1,\dots,m}$ satisfying the $c$-signature inequalities in Figure~\ref{fig:parameter_inequalities_123}, the fan~$\fanp{M_{123,\bf a}}$ is a complete fan realizing~$\subwordComplex[Q]$.
\end{theorem}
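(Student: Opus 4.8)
The plan is to verify the two conditions of Theorem~\ref{thm:fan_reformulation} for the Gale pair $(M_{123,\bf a},\dualCountingMatrixp{123}{\bf a})$: the Signature condition~(S) and the Injectivity condition~(I). By Lemma~\ref{lem:complete_fan2}, condition~(S) is equivalent to the statement that $\dualCountingMatrixp{123}{\bf a}$ is a Coxeter signature matrix for $(Q,\wo)$, and this is exactly where the $c$-signature inequalities of Figure~\ref{fig:parameter_inequalities_123} enter. Since $\dualCountingMatrixp{123}{\bf a}$ specializes to $\dualCountingMatrixp{123}{m}$ when $a_i=b_i=c_i=i$, the theorem is a genuine parametric refinement of Corollary~\ref{cor:fan_123}, and I would organize the argument so that the integer case supplied by Theorem~\ref{thm:complete_fan} serves as a base point.

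First I would establish~(S) by the method of Proposition~\ref{prop:part1}: for each of the $16$ reduced expressions $w\subset Q$ of $\wo$, compute the determinant $\det(w)$ of the $6\times 6$ submatrix of $\dualCountingMatrixp{123}{\bf a}$ on the six columns selected by $w$, now as a polynomial in the parameters $\{a_i,b_i,c_i\}$. The key claim, to be checked symbolically (e.g.\ in Sage, as in the proof of Proposition~\ref{prop:part1}), is that each such determinant factors exactly as in Table~\ref{table:determinantsA3_2}, with the six block-indices $a\ge b\ge \dots \ge f$ replaced by the corresponding parameters: each difference factor such as $(a-d)$ becomes a difference of two parameters of the same type ($a_i-a_j$, $b_i-b_j$, or $c_i-c_j$) controlled by the three monotonicity inequalities, while the two long factors $2(a+d)-b-c-e-f$ and $a+b+d+e-2(c+f)$ become precisely the left-hand sides of the fourth and fifth inequalities (summed over the two relevant block pairs). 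Granting this factorization, the inequalities of Figure~\ref{fig:parameter_inequalities_123} force every $\det(w)$ to be nonzero with the sign matching $\sign(w)$ (compare Figure~\ref{fig:sign_functionA3}), which is condition~\eqref{cond:signature}. In particular all determinants stay nonzero on the whole parameter region, so condition~(B) holds and the fan is simplicial throughout.

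For the Injectivity condition~(I) I would use a deformation argument rather than redo the explicit separating-hyperplane analysis of Section~\ref{sec:cm_part}. Let $R\subset\R^{3m}$ be the open set of parameters satisfying the $c$-signature inequalities; as an intersection of open half-spaces it is convex, hence connected, and one checks directly that $a_i=b_i=c_i=i$ lies in $R$. Fixing a continuous local choice of Gale dual $M_{123,\bf a}$ (possible since $\dualCountingMatrixp{123}{\bf a}$ has constant rank $N=6$ on $R$ by the first step), the columns of $M_{123,\bf a}$ vary continuously and define a continuous family of radial-simplicial maps $f_{\bf a}\colon |\subwordComplex[Q]|\to S^{3m-7}$ from the sphere underlying the subword complex to the unit sphere of $\R^{3m-6}$. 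By~(B) each facet maps nondegenerately, and by~(F), a consequence of~(S) via Lemma~\ref{lem:complete_fan2}, the images of adjacent facets lie on opposite sides of their common wall with consistent orientation; hence each $f_{\bf a}$ is an orientation-preserving local homeomorphism away from the codimension-$2$ skeleton. Its mapping degree is therefore an integer depending continuously on ${\bf a}$, so locally constant on $R$, and since the map at the base point $a_i=b_i=c_i=i$ is a homeomorphism by Theorem~\ref{thm:complete_fan}, this degree equals $1$ on all of $R$. A degree-one map with globally consistent orientation covers each generic point exactly once, which is precisely condition~(I); in fact it yields the stronger statement that no two maximal cones overlap.

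I expect the main obstacle to be the computational core of the first step: establishing the clean factorizations of all sixteen parametric determinants and confirming that the five families in Figure~\ref{fig:parameter_inequalities_123} are exactly what is needed to pin down their signs. A secondary point requiring care is the continuity of the Gale dual and the local constancy of the degree on the \emph{unbounded} region $R$; this causes no trouble because $R$ is open and connected and the strict inequalities of~(S) keep every facet determinant nonzero, so along any path in $R$ no ray collides and no facet cone degenerates.
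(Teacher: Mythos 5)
Your proposal is correct and takes essentially the same approach as the paper: the signature condition is established exactly as you outline (the parametric determinants follow the formulas of Table~\ref{table:determinantsA3_2}, and the inequalities of Figure~\ref{fig:parameter_inequalities_123} pin down their signs), and your deformation/degree argument is the paper's own injectivity argument, which views the fan as an $\ell$-covering of the sphere, uses path-connectivity of the parameter region to keep $\ell$ locally constant, and evaluates $\ell=1$ at the base point $a_i=b_i=c_i=i$ via Corollary~\ref{cor:fan_123}.
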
 

\begin{proof}
If the parameters ${\bf a}=\{a_i,b_i,c_i\}_{i=1,\dots,m}$ satisfy the $c$-signature inequalities in Figure~\ref{fig:parameter_inequalities_123} then they also satisfy the polynomial inequalities in Table~\ref{table:determinantsA3_2} for all reduced expressions of $\wo$ in $Q$. Therefore, the matrix~$\dualCountingMatrixp{123}{\bf a}$ is a Coxeter signature matrix for the pair $(Q,\wo)$. By Lemma~\ref{lem:complete_fan2}, this implies that any two adjacent cones of the fan~$\fanp{M_{123,\bf a}}$ lie in opposite sides of their intersection. Hence, $\fanp{M_{123,\bf a}}$ is an $\ell$-covering of the sphere. We need to show that $\ell=1$ for any choice of~$\bf a$. The space of solutions to the signature inequalities is a path-connected set. In particular, any two points in this set can be connected by a continuous path contained in the set. For a particular choice of Gale dual matrices, this induces a continuous path of matrices $M_{123,\bf a}$ for $\bf a$ in the path. The value of $\ell$ for all associated fans remains constant. Thus, the value of $\ell$ is constant for all fans~$\fanp{M_{123,\bf a}}$ with $\bf a$ satisfying the $c$-signature inequalities. We have seen in Corollary~\ref{cor:fan_123} that $\ell=1$ for the particular case where $a_i=b_i=c_i=i$. This finishes the proof.
\end{proof}

\begin{remark}
One can also consider parameters ${\bf a}=\{a_i,b_i,c_i\}_{i=1,\dots,m}$ satisfying the polynomial inequalities in Table~\ref{table:determinantsA3_2}, for all reduced expressions of $\wo$ in $c^m$. 
It turns out that the set of solutions can be described in a very simple way:
for $m\geq 4$, the parameters ${\bf a}=\{a_i,b_i,c_i\}$ satisfy the polynomial inequalities in Table~\ref{table:determinantsA3_2} if and only if they satisfy the linear signature inequalities in Figure~\ref{fig:parameter_inequalities_123} or their converses (replacing all symbols $>$ by $<$). In the converse situation, the inequalities can be obtained by rotating Figure~\ref{fig:parameter_inequalities_123} $180^{\circ}$ degrees. This operation only affects the fan by relabelling of the rays. For this reason, we restrict our study to the simpler linear system of signature inequalities. Figure~\ref{fig:polynomial_not-linear} shows an example in the extremal case $m=3$ where the shown parameters satisfy the polynomial inequalities but not the signature inequalities or their converses.
\end{remark}

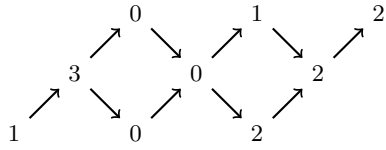
\begin{figure}[!htbp]
  \begin{center}
  \begin{tikzpicture}[scale=0.8]

  \node (s1-1) at (1,1) {\small$1$};
  \node (s2-1) at (2,2) {\small$3$} edge[<-,thick] (s1-1);
  \node (s3-1) at (3,3) {\small$0$} edge[<-,thick] (s2-1);

  \node (s1-2) at (3,1) {\small$0$} edge[<-,thick] (s2-1);
  \node (s2-2) at (4,2) {\small$0$} edge[<-,thick] (s1-2) edge[<-,thick] (s3-1);
  \node (s3-2) at (5,3) {\small$1$} edge[<-,thick] (s2-2);

  \node (s1-3) at (5,1) {\small$2$} edge[<-,thick] (s2-2);
  \node (s2-3) at (6,2) {\small$2$} edge[<-,thick] (s1-3) edge[<-,thick] (s3-2);
  \node (s3-3) at (7,3) {\small$2$} edge[<-,thick] (s2-3);

  \end{tikzpicture}
  \end{center}
  \caption{Example of parameters, for $m=3$, satisfying the polynomial inequalities in Table~\ref{table:determinantsA3_2} but not the signature inequalities (or their converses) in Figure~\ref{fig:parameter_inequalities_123}.}
  \label{fig:polynomial_not-linear}
\end{figure}

Similarly, we can also define an ${\bf a}$-counting matrix $\dualCountingMatrixp{213}{\bf a}$ for the bipartite Coxeter element $c=(s_2,s_1,s_3)$ as the matrix

\begin{minipage}{.4\textwidth}
\footnotesize
\[
	\dualCountingMatrixp{213}{\bf a}=\left(\begin{array}{c}
	\begin{array}{c@{\hspace{0.25cm}}|@{\hspace{0.5cm}}c@{\hspace{0.5cm}}|@{\hspace{0.25cm}}c}
	&& \\[1ex]
	\widehat D_m^{\bf a} & \cdots & \widehat D_1^{\bf a}\\
	&& \\[1ex]
	\end{array}
	\end{array}\right)_{6\times 3m}
\]
\end{minipage}
\qquad
\begin{minipage}{.4\textwidth}
\footnotesize
\[
	\widehat D_i^{\bf a}=\left(\begin{array}{ccc}
	1 & 0 & 0 \\
	a_i & m-b_i+1 & 0 \\
	a_i & 0 & m-c_i+1 \\
	a_i^2 & \binom{m+1}{2}-\binom{b_i}{2} & \binom{m+1}{2}-\binom{c_i}{2}\\
	0 & 0 & 1 \\ 
	0 & 1 & 0
	\end{array}\right)
\]
\end{minipage}

\medskip
\noindent
and denote by $M_{213,\bf a}$ any Gale dual matrix of $\dualCountingMatrixp{213}{\bf a}$. As before, we are interested in parameters satisfying the $c$-signature inequalities in Figure~\ref{fig:parameter_inequalities_213}, for $c=(s_2,s_1,s_3)$.
The case where $a_i=b_i=c_i=i$ gives rise to the counting matrix $\dualCountingMatrixp{213}{m}$ in Appendix~\ref{app:matrices}. 
We omit the proof of the following result since it is similar to the proof of Theorem~\ref{thm:fan_real_parameters_123} above.

\begin{theorem}\label{thm:fan_real_parameters_213}
Let~$c=(s_2,s_1,s_3)$ be a bipartite Coxeter element of type $A_3$ and $Q=c^m$, with $m\geq 3$. For any choice of real parameters ${\bf a}=\{a_i,b_i,c_i\}_{i=1,\dots,m}$ satisfying the $c$-signature inequalities in Figure~\ref{fig:parameter_inequalities_213}, the fan~$\fanp{M_{213,\bf a}}$ is a complete fan realizing~$\subwordComplex[Q]$. In particular, it is a complete fan realizing the multi-associahedron $\Delta_{2k+4,k}$ for $k=m-2$. 
\end{theorem}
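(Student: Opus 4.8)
The plan is to follow verbatim the strategy used in the proof of Theorem~\ref{thm:fan_real_parameters_123}, substituting the Coxeter element $c=(s_1,s_2,s_3)$ by the bipartite element $c=(s_2,s_1,s_3)$, and replacing the data of Table~\ref{table:determinantsA3_2} and Figure~\ref{fig:parameter_inequalities_123} by those of Table~\ref{table:determinantsA3} and Figure~\ref{fig:parameter_inequalities_213}. First I would verify that the $c$-signature inequalities force the prescribed sign of every determinant formula in Table~\ref{table:determinantsA3}, so that $\dualCountingMatrixp{213}{\bf a}$ is a Coxeter signature matrix for the pair $(Q,\wo)$. By Lemma~\ref{lem:complete_fan2}, this yields conditions $(B)$ and $(F)$ of Lemma~\ref{lem:complete_fan1}, meaning that any two adjacent cones of $\fanp{M_{213,\bf a}}$ lie on opposite sides of the hyperplane spanned by their common facet. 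Consequently the fan is an $\ell$-covering of the sphere $S^{3m-7}$ for some integer $\ell\geq 1$.

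The crux is then to show that $\ell=1$ for every admissible $\bf a$. The region cut out by the signature inequalities in Figure~\ref{fig:parameter_inequalities_213} is defined by linear inequalities, hence convex and in particular path-connected; moreover it contains the base point $a_i=b_i=c_i=i$, as a direct check of all five families of inequalities shows. Along any continuous path inside this region the matrix $\dualCountingMatrixp{213}{\bf a}$ remains a signature matrix, so all $N\times N$ minors indexed by reduced expressions of $\wo$ stay nonzero, and one may choose a continuous family of Gale dual matrices $M_{213,\bf a}$. The covering number $\ell$ is an integer-valued invariant that varies continuously with the fan, hence is constant along the path. Since Corollary~\ref{cor:fan_bipartite213} establishes $\ell=1$ at the base point $a_i=b_i=c_i=i$, we conclude $\ell=1$ everywhere, so $\fanp{M_{213,\bf a}}$ is a complete simplicial fan realizing $\subwordComplex[Q]$.

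For the final assertion, setting $m=k+2$ and invoking the identification of $\Delta_{2k+4,k}$ with $\subwordComplex[c^{k+2}]$, together with the correspondence between the columns of $M_{213,k+2}$ and the $3k+6$ $k$-relevant diagonals spelled out before Example~\ref{ex:multi_10_3}, immediately shows that $\fanp{M_{213,\bf a}}$ realizes the multi-associahedron $\Delta_{2k+4,k}$.

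I expect the main obstacle to be the first step: checking that the linear $c$-signature inequalities imply the prescribed signs of all sixteen factored determinants of Table~\ref{table:determinantsA3}. The monotonicity inequalities $a_i>a_{i-1}$, $b_i>b_{i-1}$, $c_i>c_{i-1}$ dispatch the linear difference factors, but the quartic determinants of the $231231$- and $132132$-families contain the mixed factors $2(a+d)-b-c-e-f+2$ and $a+b+d+e-2(c+f)-2$, whose positivity must be matched precisely to the fourth and fifth (pairwise) signature inequalities of Figure~\ref{fig:parameter_inequalities_213}. Care is needed in tracking which of the parameters $a_i$, $b_i$, $c_i$ is assigned to each of the six letters of a given reduced expression, counted from right to left as in the convention of Proposition~\ref{prop:part1}.
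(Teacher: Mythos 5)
Your proposal is correct and is essentially the proof the paper intends: the paper explicitly omits the argument for Theorem~\ref{thm:fan_real_parameters_213}, stating it is similar to that of Theorem~\ref{thm:fan_real_parameters_123}, and your adaptation (signature inequalities $\Rightarrow$ signature matrix via Table~\ref{table:determinantsA3}, then Lemma~\ref{lem:complete_fan2} giving an $\ell$-covering, then constancy of $\ell$ over the convex parameter region anchored at $a_i=b_i=c_i=i$ via Corollary~\ref{cor:fan_bipartite213}) is exactly that argument. Your closing remarks on the base-point check and on matching the mixed factors $2(a+d)-b-c-e-f+2$ and $a+b+d+e-2(c+f)-2$ to the fourth and fifth inequality families correctly identify where the real verification work lies.
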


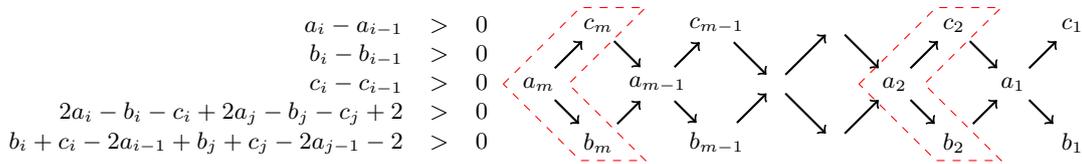
\begin{figure}[!htbp]
  \begin{center}
  \begin{tikzpicture}[scale=0.78]

  \node (s2-1) at (2,2) {\small$a_m$};
  \node (s3-1) at (3,3) {\small$c_m$} edge[<-,thick] (s2-1);
  \node (s1-2) at (3,1) {\small$b_m$} edge[<-,thick] (s2-1);
  
  \node (s2-2) at (4,2) {\small$a_{m-1}$} edge[<-,thick] (s1-2) edge[<-,thick] (s3-1);
  \node (s3-2) at (5,3) {\small$c_{m-1}$} edge[<-,thick] (s2-2);
  \node (s1-3) at (5,1) {\small$b_{m-1}$} edge[<-,thick] (s2-2);
  
  \node (s2-3) at (6,2) {} edge[<-,thick] (s1-3) edge[<-,thick] (s3-2);
  \node (s3-3) at (7,3) {} edge[<-,thick] (s2-3);
  \node (s1-4) at (7,1) {} edge[<-,thick] (s2-3);
  
  \node (s2-4) at (8,2) {\small$a_2$} edge[<-,thick] (s1-4) edge[<-,thick] (s3-3);
  \node (s3-4) at (9,3) {\small$c_2$} edge[<-,thick] (s2-4);
  \node (s1-5) at (9,1) {\small$b_2$} edge[<-,thick] (s2-4);
  
  \node (s2-5) at (10,2) {\small$a_1$} edge[<-,thick] (s1-5) edge[<-,thick] (s3-4);
  \node (s3-5) at (11,3) {\small$c_1$} edge[<-,thick] (s2-5);
  \node (s1-6) at (11,1) {\small$b_1$} edge[<-,thick] (s2-5);

  \node (eq) at (-2.9,2) {
  \small
  $
\begin{array}{rcc}
 a_i-a_{i-1} &  > & 0  \\
 b_i-b_{i-1} &  > & 0  \\
 c_i-c_{i-1} &  > & 0  \\ 
 2a_i-b_i-c_i+2a_j-b_j-c_j +2 &  > & 0  \\ 
 b_i+c_i-2a_{i-1}+b_j+c_j-2a_{j-1} -2 &  > & 0  \\ 
\end{array}
$
  };  
  
  \draw[dashed,red] (1.4,2)--(2.7,3.3)--(3.8,3.3)--(2.5,2)--(3.8,0.7)--(2.7,0.7) -- cycle;
  \draw[dashed,red] (7.4,2)--(8.7,3.3)--(9.8,3.3)--(8.5,2)--(9.8,0.7)--(8.7,0.7) -- cycle;
    
  \end{tikzpicture}
  \end{center}
  \caption{Signature inequalities for $c=(s_2,s_1,s_3)$ in type $A_3$. The dashed shapes on the right show an example of the parameters involved in the fourth equation.}
  \label{fig:parameter_inequalities_213}
\end{figure}

%

%
%

\section{Multi-associahedra of Type $A_4$}\label{sec:typeA4}

The construction of the fans presented in this paper only works for subword complexes of type~$A_n$ for~$n\leq 3$. However, doing a slight modification to the construction we were able to obtain fan realizations of the multi-associahedra~$\Delta_{9,2}$ and~$\Delta_{11,3}$, which are isomorphic to the subword complexes of type~$A_4$ corresponding to the words~$Q=c^2\wo(c)$ and~$Q=c^3\wo(c)$ respectively, for any Coxeter element~$c$. We worked with the bipartite Coxeter element $c=(2,4,1,3)$ and obtained the following results. In both cases, the cones of the fan are spanned by the row vectors corresponding to the faces of the associated subword complex. We refer to \cite[Section 2.4]{ceballos_subword_2013} for an explicit bijection between positions in the word $Q$ and $k$-relevant diagonals in the polygon.


\begin{example}[Multi-associahedron~$\Delta_{9,2}$]
The multi-associahedron~$\Delta_{9,2}$ can be realized as the complete simplicial fan whose rays are the row vectors of the matrix below and whose cones are spanned by the row vectors corresponding to faces of~$\Delta_{9,2}$. The pairs of numbers on the left of the matrix are the 2-relevant diagonals of the 9-gon associated to each of the rows of the matrix.
Remarkably, this fan is not the normal fan of a polytope. 
\end{example}

\[
\footnotesize
\begin{blockarray}{rrrrrrrrr}
\begin{block}{r(rrrrrrrr)}
1,6 & -1 & 0 & 0 & 0 & 0 & 0 & 0 & 0\ \\[0.5em]
2,5 & 0 & -1 & 0 & 0 & 0 & 0 & 0 & 0\ \\[0.5em]
1,7 & 0 & 0 & -1 & 0 & 0 & 0 & 0 & 0\ \\[0.5em]
2,6 & 0 & 0 & 0 & -1 & 0 & 0 & 0 & 0\ \\[0.5em]
2,7 & 0 & 0 & 0 & 0 & -1 & 0 & 0 & 0\ \\[0.5em]
3,6 & 0 & 0 & 0 & 0 & 0 & -1 & 0 & 0\ \\[0.5em]
2,8 & 0 & 0 & 0 & 0 & 0 & 0 & -1 & 0\ \\[0.5em]
3,7 & 0 & 0 & 0 & 0 & 0 & 0 & 0 & -1\ \\[0.5em]
3,8 & \frac{77}{5} & 4 & -4 & -9 & 6 & 1 & -1 & -2\ \\[0.5em]
4,7 & \frac{189}{20} & 10 & -1 & -12 & 3 & 4 & 0 & -3\ \\[0.5em]
3,9 & \frac{199}{10} & 8 & -2 & -15 & 6 & 2 & 1 & -3\ \\[0.5em]
4,8 & \frac{201}{10} & 8 & -5 & -12 & 6 & 2 & -1 & -1\ \\[0.5em]
4,9 & -\frac{141}{20} & 0 & 3 & 3 & -3 & 0 & 1 & 1\ \\[0.5em]
5,8 & \frac{17}{20} & -8 & -3 & 9 & 0 & -3 & -1 & 3\ \\[0.5em]
1,4 & -\frac{397}{20} & -8 & 3 & 15 & -6 & -2 & 0 & 3\ \\[0.5em]
5,9 & -20 & -8 & 5 & 13 & -6 & -2 & 1 & 2\ \\[0.5em]
1,5 & -\frac{73}{10} & -4 & 1 & 6 & -2 & -1 & 0 & 1\ \\[0.5em]
6,9 & -\frac{41}{4} & -1 & 4 & 3 & -3 & 0 & 1 & 0\ \\
\end{block}
\end{blockarray}
 \]


\begin{example}[Multi-associahedron~$\Delta_{11,3}$]
The multi-associahedron~$\Delta_{11,3}$ can be realized as the complete simplicial fan whose rays are the row vectors of the matrix below and whose cones are spanned by the row vectors corresponding to faces of~$\Delta_{11,3}$. The pairs of numbers on the left of the matrix are the 3-relevant diagonals of the 11-gon associated to each of the rows of the matrix.
Remarkably, this fan is not the normal fan of a polytope. 
\end{example}

\[
\scriptsize
\begin{blockarray}{lrrrrrrrrrrrr}
\begin{block}{l(rrrrrrrrrrrr)}
1,7 & -1 & 0 & 0 & 0 & 0 & 0 & 0 & 0 & 0 &0 & 0 & 0\hspace{0.1cm} \\[0.5em]
2,6 & 0 & -1 & 0 & 0 & 0 & 0 & 0 & 0 & 0 & 0 & 0 & 0\hspace{0.1cm} \\[0.5em]
1,8 & 0 & 0 & -1 & 0 & 0 & 0 & 0 & 0 & 0 & 0 & 0 & 0\hspace{0.1cm} \\[0.5em]
2,7 & 0 & 0 & 0 & -1 & 0 & 0 & 0 & 0 & 0 & 0 & 0 & 0\hspace{0.1cm} \\[0.5em]
2,8 & 0 & 0 & 0 & 0 & -1 & 0 & 0 & 0 & 0 & 0 & 0 & 0\hspace{0.1cm} \\[0.5em]
3,7 & 0 & 0 & 0 & 0 & 0 & -1 & 0 & 0 & 0 & 0 & 0 & 0\hspace{0.1cm} \\[0.5em]
2,9 & 0 & 0 & 0 & 0 & 0 & 0 & -1 & 0 & 0 & 0 & 0 & 0\hspace{0.1cm} \\[0.5em]
3,8 & 0 & 0 & 0 & 0 & 0 & 0 & 0 & -1 & 0 & 0 & 0 & 0\hspace{0.1cm} \\[0.5em]
3,9 & 0 & 0 & 0 & 0 & 0 & 0 & 0 & 0 & -1 & 0 & 0 & 0\hspace{0.1cm} \\[0.5em]
4,8 & 0 & 0 & 0 & 0 & 0 & 0 & 0 & 0 & 0 & -1 & 0 & 0\hspace{0.1cm} \\[0.5em]
3,10 & 0 & 0 & 0 & 0 & 0 & 0 & 0 & 0 & 0 & 0 & -1 & 0\hspace{0.1cm} \\[0.5em]
4,9 & 0 & 0 & 0 & 0 & 0 & 0 & 0 & 0 & 0 & 0 & 0 & -1\hspace{0.1cm} \\[0.5em]
4,10 & \frac{34142}{2005} & \frac{2600}{1203} & -\frac{6200}{1203} & -\frac{3880}{401} & \frac{3960}{401} & \frac{1040}{1203} & -\frac{2840}{1203} & -\frac{1780}{401} & \frac{1720}{401} & \frac{260}{1203} & -\frac{860}{1203} & -\frac{1720}{1203}\hspace{0.1cm} \\[0.5em]
5,9 & \frac{2196909}{200500} & \frac{5424}{401} & -\frac{133}{401} & -\frac{74871}{4010} & \frac{11946}{2005} & \frac{14858}{2005} & \frac{749}{4010} & -\frac{34131}{4010} & \frac{3852}{2005} & \frac{6722}{2005} & \frac{721}{4010} & -\frac{5294}{2005}\hspace{0.1cm} \\[0.5em]
4,11 & \frac{958753}{40100} & \frac{3090}{401} & -\frac{1045}{401} & -\frac{15915}{802} & \frac{4680}{401} & \frac{1236}{401} & \frac{375}{802} & -\frac{6431}{802} & \frac{1413}{401} & \frac{309}{401} & \frac{1133}{802} & -\frac{872}{401}\hspace{0.1cm} \\[0.5em]
5,10 & \frac{478059}{20050} & \frac{9205}{1203} & -\frac{7792}{1203} & -\frac{12513}{802} & \frac{4581}{401} & \frac{3682}{1203} & -\frac{5951}{2406} & -\frac{1968}{401} & \frac{1370}{401} & \frac{1841}{2406} & -\frac{685}{1203} & -\frac{167}{1203}\hspace{0.1cm} \\[0.5em]
5,11 & -\frac{1344229}{200500} & \frac{3398}{1203} & \frac{4945}{1203} & \frac{2501}{4010} & -\frac{9596}{2005} & \frac{6796}{6015} & \frac{28123}{12030} & \frac{4961}{4010} & -\frac{4492}{2005} & \frac{1699}{6015} & \frac{10507}{12030} & \frac{4492}{6015}\hspace{0.1cm} \\[0.5em]
6,10 & \frac{87853}{200500} & -\frac{5977}{401} & -\frac{2219}{401} & \frac{74643}{4010} & -\frac{1908}{2005} & -\frac{15964}{2005} & -\frac{11057}{4010} & \frac{37623}{4010} & -\frac{1011}{2005} & -\frac{5996}{2005} & -\frac{3673}{4010} & \frac{6352}{2005}\hspace{0.1cm} \\[0.5em]
1,5 & -\frac{2356367}{100250} & -\frac{3034}{401} & \frac{1405}{401} & \frac{39228}{2005} & -\frac{23046}{2005} & -\frac{6068}{2005} & \frac{993}{2005} & \frac{15888}{2005} & -\frac{6972}{2005} & -\frac{1517}{2005} & -\frac{843}{2005} & \frac{4329}{2005}\hspace{0.1cm} \\[0.5em]
6,11 & -\frac{47046}{2005} & -\frac{9050}{1203} & \frac{7700}{1203} & \frac{6565}{401} & -\frac{4530}{401} & -\frac{3620}{1203} & \frac{2945}{1203} & \frac{2340}{401} & -\frac{1360}{401} & -\frac{905}{1203} & \frac{680}{1203} & \frac{1360}{1203}\hspace{0.1cm} \\[0.5em]
1,6 & -\frac{400064}{50125} & -\frac{5468}{1203} & \frac{824}{1203} & \frac{16277}{2005} & -\frac{6939}{2005} & -\frac{10936}{6015} & -\frac{674}{6015} & \frac{5762}{2005} & -\frac{1738}{2005} & -\frac{2734}{6015} & -\frac{1136}{6015} & \frac{3743}{6015}\hspace{0.1cm} \\[0.5em]
7,11 & -\frac{2203731}{200500} & \frac{1010}{401} & \frac{2311}{401} & -\frac{891}{4010} & -\frac{9684}{2005} & \frac{3223}{2005} & \frac{10159}{4010} & -\frac{3871}{4010} & -\frac{2748}{2005} & \frac{1307}{2005} & \frac{2921}{4010} & -\frac{1089}{2005}\hspace{0.1cm} \\
\end{block}
\end{blockarray}
 \]

In the remaining of this section, we explain how we obtained these results. As in the construction in type $A_3$, the key point is to find a matrix having the Signature property and whose Gale dual matrix has the Injectivity property for the required subword complex. Such a Gale dual matrix will give rise to complete simplicial fan realizing the subword complex. The signature matrices for the subword complexes associated to~$\Delta_{9,2}$ and~$\Delta_{11,3}$ were found applying a slight modification to the counting matrices in Definition~\ref{def:dual_counting_matrix}. It remained to check the Injectivity property of a Gale dual matrix. The multi-associahedron $\Delta_{9,2}$ is 8-dimensional with 594 maximal cones, and $\Delta_{11,3}$ is 12-dimensional with 4719 maximal cones. It is worth mentioning that the completeness verification in these cases has a practical complexity reaching very fast a bottleneck. To avoid these complications, we verified the Injectivity condition using the computer algebra system Sage~\cite{sage} and finally used Theorem~\ref{thm:fan_reformulation} to conclude. In practice, the Injectivity condition needed in Theorem~\ref{thm:fan_reformulation} usually holds for signature matrices.

Let~$c=(s_2,s_4,s_1,s_3)$ be a bipartite Coxeter element of type~$A_4$ and~$Q=c^k\wo(c)$. For this particular Coxeter element~$\wo(c)= (2,4,1,3,2,4,1,3,2,4)$. The subword complex~$\subwordComplex[Q]$ is isomorphic to the multi-associahedron~$\Delta_{2k+5,k}$. Consider the counting matrix from Definition~\ref{def:dual_counting_matrix} which counts reduced expressions of $c_\alpha$'s in this particular word. We tested if this matrix is a signature matrix for different values of $k$ and obtained the following results. 

\begin{table}[!htdp]
\begin{tabular}{|c|c|c|c|c|}
\hline
&&&&\\[-.13in]
$k$ & good signs & bad signs & zero & total \\[0.05in]
\hline
1& 42 & 0 & 0 & 42 \\
2& 593 & 0 & 1 & 594 \\
3& 4702 & 0 & 17 & 4719 \\
4& 25905 & 6 & 115 & 26026 \\
\hline
\end{tabular}
\caption{Signs of the determinants of reduced expressions of~$\wo$ in $c^k\wo(c)$ for the counting matrix associated to multi-associahedra of type $A_4$.}
\label{tab:signatureA4}
\end{table}%

\vspace{-0.4cm}
In the cases $k=2$ and $k=3$ almost all determinants had the right signs with the exception of a few that were equal to zero. We modified some of the columns involved in the zero determinants in such a way that they become non-zero determinants with the right signs. Making this modification small enough makes sure that all other determinants still have good signs. Note that we are not able to obtain complete simplicial fan realizations for the case~$k=4$ because such a small modification would not correct the bad signs that appear. The signature matrices we obtained are given below. Their Gale dual matrices are the transposes of the matrices above.

\noindent
{\bf Signature matrix for $\Delta_{9,2}$.} 
\[
\footnotesize
\left(\begin{array}{rrrrrrrrrrrrrrrrrr}
\frac{21}{20} & 0 & 0 & 0 & 1 & 0 & 0 & 0
& 1 & 0 & 0 & 0 & 1 & 0 & 0 & 0 & 1
& 0 \\[0.5em]
\frac{1}{20} & 1 & 0 & 0 & 0 & 1 & 0 & 0
& 0 & 1 & 0 & 0 & 0 & 1 & 0 & 0 & 0
& 1 \\[0.5em]
\frac{81}{20} & 0 & 1 & 0 & 3 & 0 & 2 & 0
& 2 & 0 & 3 & 0 & 1 & 0 & 4 & 0 & 0
& 0 \\[0.5em]
\frac{201}{20} & 10 & 0 & 1 & 9 & 9 & 0 & 4
& 7 & 7 & 0 & 9 & 4 & 4 & 0 & 16 & 0
& 0 \\[0.5em]
\frac{801}{20} & 30 & 10 & 4 & 27 & 26 & 19
& 14 & 14 & 19 & 26 & 27 & 4 & 10 & 30
& 40 & 0 & 0 \\[0.5em]
\frac{81}{20} & 0 & 0 & 1 & 3 & 0 & 0 & 2
& 2 & 0 & 0 & 3 & 1 & 0 & 0 & 4 & 0
& 0 \\[0.5em]
\frac{1}{20} & 4 & 0 & 1 & 0 & 3 & 0 & 2
& 0 & 2 & 0 & 3 & 0 & 1 & 0 & 4 & 0
& 0 \\[0.5em]
\frac{321}{20} & 0 & 4 & 4 & 9 & 0 & 7 & 7
& 4 & 0 & 9 & 9 & 1 & 0 & 10 & 10 &
0 & 0 \\[0.5em]
\frac{1}{10} & 0 & 0 & 1 & 0 & 0 & 0 & 1
& 0 & 0 & 0 & 1 & 0 & 0 & 0 & 1 & 0
& 0 \\[0.5em]
\frac{1}{20} & 0 & 1 & 0 & 0 & 0 & 1 & 0
& 0 & 0 & 1 & 0 & 0 & 0 & 1 & 0 & 0
& 0
\end{array}\right)
\]

\bigskip
\noindent
{\bf Signature matrix for $\Delta_{11,3}$:} 
\[
\footnotesize
\left(\begin{array}{rrrrrrrrrrrrrrrrrrrrrr}
\frac{51}{50} & 0 & 0 & 0 & 1 & 0 & 0 & 0
& 1 & 0 & 0 & 0 & \frac{201}{200} & 0 & 0
& 0 & 1 & 0 & 0 & \frac{3}{50} & 1 & 0 \\[0.5em]
\frac{1}{50} & 1 & 0 & 0 & 0 & 1 & 0 & 0
& 0 & 1 & 0 & 0 & \frac{1}{200} & 1 & 0
& 0 & 0 & 1 & 0 & \frac{1}{50} & 0 & 1 \\[0.5em]
\frac{127}{25} & 0 & 1 & 0 & 4 & 0 & 2 & 0
& 3 & 0 & 3 & 0 & \frac{401}{200} & 0 & 4
& 0 & 1 & 0 & 5 & \frac{1}{50} & 0 & 0 \\[0.5em]
\frac{376}{25} & 15 & 0 & 1 & 14 & 14 & 0 &
4 & 12 & 12 & 0 & 9 & \frac{1801}{200} & 9 &
0 & 16 & 5 & 5 & 0 & \frac{1251}{50} & 0 & 0
\\[0.5em]
\frac{3751}{50} & 55 & 15 & 5 & 56 & 50 & 29
& 18 & 36 & 41 & 41 & 36 & \frac{3601}{200}
& 29 & 50 & 56 & 5 & 15 & 55 &
\frac{3751}{50} & 0 & 0 \\[0.5em]
\frac{251}{50} & 0 & 0 & 1 & 4 & 0 & 0 & 2
& 3 & 0 & 0 & 3 & \frac{401}{200} & 0 & 0
& 4 & 1 & 0 & 0 & \frac{251}{50} & 0 & 0 \\[0.5em]
\frac{1}{50} & 5 & 0 & 1 & 0 & 4 & 0 & 2
& 0 & 3 & 0 & 3 & \frac{1}{200} & 2 & 0
& 4 & 0 & 1 & 0 & \frac{251}{50} & 0 & 0 \\[0.5em]
\frac{1251}{50} & 0 & 5 & 5 & 16 & 0 & 9 & 9
& 9 & 0 & 12 & 12 & \frac{801}{200} & 0 & 14
& 14 & 1 & 0 & 15 & \frac{751}{50} & 0 & 0
\\[0.5em]
\frac{2}{25} & 0 & 0 & 1 & 0 & 0 & 0 & 1
& 0 & 0 & 0 & 1 & \frac{1}{100} & 0 & 0
& 1 & 0 & 0 & 0 & \frac{51}{50} & 0 & 0 \\[0.5em]
\frac{1}{50} & 0 & 1 & 0 & 0 & 0 & 1 & 0
& 0 & 0 & 1 & 0 & \frac{1}{200} & 0 & 1
& 0 & 0 & 0 & 1 & \frac{1}{50} & 0 & 0
\end{array}\right)
\]

\appendix
\section{Counting matrices and their Gale dual matrices: general formulas}\label{app:matrices}

In this appendix, we present general formulas for the counting matrices~$\dualCountingMatrix$ of type $A_n$ with~$n\leq 3$. We also present formulas for their Gale dual matrices, which are used to exhibit explicit coordinates for the complete simplicial fans realizing the subword complexes and multi-associahedra of type~$A_3$ in Corollaries~\ref{cor:fan_bipartite213} and~\ref{cor:fan_123}.

\subsection{Counting matrices}
The matrices~$\dualCountingMatrix$ of type $A_n$ with $n\leq 3$ are given by: 
\begin{description}
\item[Type $A_1$, $c=(s_1)$] 
\[
\footnotesize
\dualCountingMatrixp{c}{m} 
=
\left(
\begin{array}{cccccc}
  1 & 1  & \dots & 1  & 1 & 1     
\end{array}
\right)_{1\times m}
\]

\item[Type $A_2$, $c=(s_1,s_2)$] 
\[
\footnotesize
\dualCountingMatrixp{c}{m} 
=
\left(
\begin{array}{ccccccc}
  1& 0  & 1 & 0  &\dots& 1 & 0      \\
 m &  1 & m-1 & 2  &\dots& 1 & m     \\
  0 & 1  & 0 & 1  &\dots& 0 & 1     
\end{array}
\right)_{3\times 2m}
\]
rows {\small \{$\alpha_1$, $\alpha_1+\alpha_2$,  $\alpha_2$\}} in this order. 
\smallskip

\item[Type $A_3$, $c=(s_2,s_1,s_3)$] 
\hfill \\
\begin{minipage}{.4\textwidth}
\footnotesize
\[
	\dualCountingMatrixp{c}{m}=\left(\begin{array}{c}
	\begin{array}{c@{\hspace{0.25cm}}|@{\hspace{0.5cm}}c@{\hspace{0.5cm}}|@{\hspace{0.25cm}}c}
	&& \\[1ex]
	\widehat D_m & \cdots & \widehat D_1\\
	&& \\[1ex]
	\end{array}
	\end{array}\right)_{6\times 3m}
\]
\end{minipage}
\qquad
\begin{minipage}{.4\textwidth}
\footnotesize
\[
	\widehat D_i=\left(\begin{array}{ccc}
	1 & 0 & 0 \\
	i & m-i+1 & 0 \\
	i & 0 & m-i+1 \\
	i^2 & \binom{m+1}{2}-\binom{i}{2} & \binom{m+1}{2}-\binom{i}{2}\\
	0 & 0 & 1 \\ 
	0 & 1 & 0
	\end{array}\right)
\]
\end{minipage}

\noindent
rows \small{\{$\alpha_2$, $\alpha_1+\alpha_2$, $\alpha_2+\alpha_3$, $\alpha_1+\alpha_2+\alpha_3$,  $\alpha_3$, $\alpha_1$\}} in this order.
\smallskip

\item[Type $A_3$, $c=(s_1,s_2,s_3)$] 
\hfill \\
\begin{minipage}{.4\textwidth}
\footnotesize
\[
	\dualCountingMatrixp{c}{m}=\left(\begin{array}{c}
	\begin{array}{c@{\hspace{0.25cm}}|@{\hspace{0.5cm}}c@{\hspace{0.5cm}}|@{\hspace{0.25cm}}c}
	&& \\[1ex]
	\widetilde D_m & \cdots & \widetilde D_1\\
	&& \\[1ex]
	\end{array}
	\end{array}\right)_{6\times 3m}
\]
\end{minipage}
\qquad
\begin{minipage}{.4\textwidth}
\footnotesize
\[
	\widetilde D_i=\left(\begin{array}{ccc}
	1 & 0 & 0 \\
	i & m-i+1 & 0 \\
	\binom{i+1}{2} & (m-i+1)(i) & \binom{m-i+2}{2} \\
	0 & 1 & 0 \\
	0 & i & m-i+1 \\
	0 & 0 & 1
	\end{array}\right)
\]
\end{minipage}

\noindent
rows \small{\{$\alpha_1$, $\alpha_1+\alpha_2$, $\alpha_1+\alpha_2+\alpha_3$,  $\alpha_2$, $\alpha_2+\alpha_3$, $\alpha_3$\}} in this order.
\smallskip

\end{description}

\subsection{Gale dual matrices}\label{ap:Gale_dual}

Explicit choices for Gale duals matrices $M_{c,m}$ of~$\dualCountingMatrix$ are:
\begin{description}
\item[Type $A_1$, $c=(s_1)$] 
{\footnotesize
\[
	M_{1,m}=
	\left(\begin{array}{c} \hspace{-0.3cm}
	\begin{array}{c@{\hspace{0.25cm}}|@{\hspace{0.1cm}}r@{\hspace{-0.1cm}}}
	&1 \\[-1ex]
	-I_{m-1} & \vdots  \\
	&1 \\[2ex]
	\end{array}
	\end{array}\right)_{(m-1)\times m}
\]}
\item[Type $A_2$, $c=(s_1,s_2)$] 
\hfill \\
\begin{minipage}{.4\textwidth}
\footnotesize
\[
	M_{12,m}=
	\left(\begin{array}{c@{\hspace{0.25cm}}|@{\hspace{-0.01cm}}c@{\hspace{-0.05cm}}}
	-I_{2m-3}&
	\begin{array}{ccc}
	&E_1 & \\
	\hline \\[-0.3cm]
	&\vdots& \\[0.1cm]
	\hline \\[-0.1cm]
	&E_{m-2}& \\[0.1cm]
	\hline \\[-0.2cm]
	-1 & 1 & 1
	\end{array}
	\end{array}\right)_{(2m-3)\times (2m)}
\]
\end{minipage}
\qquad
\begin{minipage}{.4\textwidth}
\footnotesize
\[
	\footnotesize
	E_i:=\left(\begin{array}{ccc}
	-m+i & 1 & m-i \\
	m-i & 0 & -m+i+1 
	\end{array}\right)
\]
\end{minipage}

\bigskip
\item[Type $A_3$, $c=(s_2,s_1,s_3)$] Matrix $M_{213,m}$ in Corollary~\ref{cor:fan_bipartite213}
\item[Type $A_3$, $c=(s_1,s_2,s_3)$] Matrix $M_{123,m}$ in Corollary~\ref{cor:fan_123}.
\end{description}

\bibliographystyle{alpha}
\bibliography{SubwordComplexFans}
\label{sec:biblio}

\end{document}